\documentclass[11pt,leqno]{amsart}

\usepackage[a4paper, right=20mm, left=20mm, top=25mm]{geometry}

\usepackage{microtype}

\usepackage{amsthm}
\usepackage[english]{babel}
\usepackage[utf8]{inputenc}
\usepackage{csquotes}
\usepackage{hyperref}
\usepackage[foot]{amsaddr}

\usepackage{amsmath}
\usepackage{amssymb}
\usepackage{mathtools}
\usepackage{enumitem}

\usepackage{tikz}
\usetikzlibrary{matrix,arrows,positioning}

\numberwithin{equation}{section}
\numberwithin{figure}{section}

\theoremstyle{plain}
\newtheorem{Theorem}{Theorem}[section]
\newtheorem*{Theorem*}{Theorem}
\newtheorem{Lemma}[Theorem]{Lemma}
\newtheorem*{Lemma*}{Lemma}
\newtheorem{Proposition}[Theorem]{Proposition}
\newtheorem*{Proposition*}{Proposition}
\newtheorem{Corollary}[Theorem]{Corollary}

\theoremstyle{definition}
\newtheorem{Definition}[Theorem]{Definition}
\newtheorem*{Definition*}{Definition}
\newtheorem{Remark}[Theorem]{Remark}

\newcommand{\C}{\mathbb{C}}
\newcommand{\R}{\mathbb{R}}
\newcommand{\Q}{\mathbb{Q}}
\newcommand{\Z}{\mathbb{Z}}
\newcommand{\kk}{\Bbbk}

\newcommand{\Hom}{\mathrm{Hom}}
\newcommand{\End}{\mathrm{End}}
\newcommand{\Ext}{\mathrm{Ext}}
\newcommand{\id}{\mathrm{id}}

\newcommand{\res}{\mathrm{res}}
\newcommand{\Cdot}{\boldsymbol{\cdot}}
\newcommand{\Stab}{\mathrm{Stab}}
\newcommand{\Rep}{\mathrm{Rep}}
\newcommand{\Tilt}{\mathrm{Tilt}}
\newcommand{\GL}{\mathrm{GL}}
\newcommand{\im}{\mathrm{im}}
\newcommand{\pr}{\mathrm{pr}}
\newcommand{\soc}{\mathrm{soc}}
\newcommand{\head}{\mathrm{head}}
\newcommand{\rad}{\mathrm{rad}}

\providecommand{\abs}[1]{\lvert#1\rvert}

\title[Minimal tilting complexes]{On minimal tilting complexes\\in highest weight categories}
\author{Jonathan Gruber}
\address{Department of Mathematics, National University of Singapore, Singapore}
\email{jgruber@nus.edu.sg}
\subjclass{17B55 (primary), 17B10, 17B67, 20G42 (secondary)}
\keywords{highest weight category, tilting module, cohomology, Lie algebra, quantum group}
\date{\today}

\begin{document}

\begin{abstract}
	We explain the construction of \emph{minimal tilting complexes} for objects of highest weight categories and we study in detail the minimal tilting complexes for standard objects and simple objects.
	For certain categories of representations of complex simple Lie algebras, affine Kac-Moody algebras and quantum groups at roots of unity, we relate the multiplicities of indecomposable tilting objects appearing in the terms of these complexes to the coefficients of Kazhdan-Lusztig polynomials.
\end{abstract}

\maketitle

\section*{Introduction}

Highest weight categories are ubiquitous in Lie theoretic representation theory, and the study of tilting objects in highest weight categories has received a lot of attention in recent years.
One key problem is the determination of the multiplicities of standard objects in $\Delta$-filtrations of indecomposable tilting objects \cite{SoergelCharakterformeln,RicheWilliamsonTiltingpCanonical}.
In this article, we take a somewhat opposite approach and try to understand standard objects and simple objects in a highest weight category in terms of (minimal) complexes of tilting objects.

Let $\kk$ be a field, let $\mathcal{C}$ be a $\kk$-linear highest weight category with (lower finite) weight poset $(\Lambda,\leq)$ and write $\Tilt(\mathcal{C})$ for the full subcategory of tilting objects in $\mathcal{C}$.
For $\lambda \in \Lambda$, we write $L_\lambda$, $\Delta_\lambda$, $\nabla_\lambda$ and $T_\lambda$ for the corresponding simple object, standard object, costandard object and indecomposable tilting object of $\mathcal{C}$, respectively.
It is well known that the canonical functor
\[ \mathfrak{T} \colon K^b\big( \Tilt(\mathcal{C}) \big) \longrightarrow D^b(\mathcal{C}) \]
from the bounded homotopy category of $\Tilt(\mathcal{C})$ to the bounded derived category of $\mathcal{C}$ is an equivalence of triangulated categories, and this allows us to associate every bounded complex $X$ in $\mathcal{C}$ with a unique homotopy class of bounded complexes in $\Tilt(\mathcal{C})$ that are isomorphic to $X$ in $D^b(\mathcal{C})$.
Furthermore, every homotopy class in $K^b\big( \Tilt(\mathcal{C}) \big)$ contains a \emph{minimal complex} (see Subsection \ref{subsec:minimalcomplexes} below) which is unique up to isomorphism of complexes, so we can associate $X$ with a unique (up to isomorphism) \emph{minimal tilting complex} $C_\mathrm{min}(X)$ that is minimal and isomorphic to $X$ in $D^b(\mathcal{C})$.
Given an object $M$ of $\mathcal{C}$, we can view $M$ as an object of $D^b(\mathcal{C})$ (as the complex whose only non-zero term is $M$, in homological degree zero) and thus define the minimal tilting complex $C_\mathrm{min}(M)$ of $M$.
Minimal tilting complexes are well-behaved with respect to direct sums and short exact sequences in $\mathcal{C}$ (see Lemmas \ref{lem:minimalcomplex} and \ref{lem:triangle}),
and the minimal tilting complex of an object $M$ of $\mathcal{C}$ encodes some important information, like the $\nabla$-filtration dimension $\dim_\nabla(M)$ and the $\Delta$-filtration dimension $\dim_\Delta(M)$ of $M$ (see Lemma \ref{lem:minimaltiltingcomplexgfdwfd}).
After establishing these general properties, we will focus on the minimal tilting complexes $C_\mathrm{min}(\Delta_\lambda)$ and $C_\mathrm{min}(L_\lambda)$, for $\lambda \in \Lambda$.
One important tool for studying these complexes is the Ringel dual of $\mathcal{C}$ and the Ringel duality functors, which we briefly recall below.

Suppose that the weight poset $\Lambda$ is finite and let $T= \bigoplus_{\lambda\in\Lambda} T_\lambda$ be the \emph{characteristic tilting object}.
The \emph{Ringel dual} $\mathcal{C}^\prime$ of $\mathcal{C}$ is defined as the category of finite dimensional left $\End_\mathcal{C}(T)^\mathrm{op}$-modules and the \emph{Ringel duality functors} are $R_\mathcal{C}^\Delta = \Hom_\mathcal{C}(T,-)$ and $R_\mathcal{C}^\nabla = \Hom_\mathcal{C}(-,T)^*$.
Then $\mathcal{C}^\prime$ is a highest weight category with weight poset $(\Lambda,\leq^\mathrm{op})$, standard objects $\Delta^\prime_\lambda = R_\mathcal{C}^\Delta(\nabla_\lambda)$, costandard objects $\nabla^\prime_\lambda = R_\mathcal{C}^\nabla(\Delta_\lambda)$ and simple objects $L^\prime_\lambda = \head \, \Delta^\prime_\lambda = \soc \, \nabla^\prime_\lambda$.
For an arbitrary (not necessarily finite) poset $(\Lambda,\leq)$ and a finite order ideal $\Gamma \subseteq \Lambda$, the Serre subcategory $\mathcal{C}_\Gamma$ of $\mathcal{C}$ generated by the simple objects $L_\lambda$, with $\lambda \in \Gamma$, is a highest weight category and we may consider its Ringel dual $\mathcal{C}_\Gamma^\prime$.
By abuse of notation, we still denote the standard objects, the costandard objects and the simple objects of $\mathcal{C}_\Gamma^\prime$ by $\Delta^\prime_\lambda$, $\nabla^\prime_\lambda$ and $L^\prime_\lambda$, respectively, when the choice of $\Gamma$ is clear from the context.
The starting point for our investigation of the minimal tilting complexes of standard objects and simple objects is the following lemma (which may already be known to experts).
It relates the multiplicities of indecomposable tilting objects appearing as direct summands in the terms of minimal tilting complexes of standard objects in $\mathcal{C}$ with dimensions of $\Ext$-groups in the Ringel dual; see Lemma \ref{lem:minimaltiltingcomplexstandard}.

\begin{Lemma*}
	Let $\lambda \in \Lambda$ and write $C_\mathrm{min}(\Delta_\lambda) = (T_\bullet,d_\bullet)$.
	For any $i \in \Z$ and $\mu \in \Lambda$ and for any finite order ideal $\Gamma \subseteq \Lambda$ with $\lambda \in \Gamma$, we have
	\[ [ T_i : T_\mu ]_\oplus = \begin{cases} \dim \Ext_{\mathcal{C}_\Gamma^\prime}^i(L_\mu^\prime,\nabla_\lambda^\prime) & \text{if } i \geq 0 \text{ and } \mu \leq \lambda , \\ 0 & \text{otherwise} . \end{cases} \]
\end{Lemma*}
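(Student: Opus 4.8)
The plan is to apply the (dual) Ringel duality functor to the minimal tilting complex $C_\mathrm{min}(\Delta_\lambda)$ and to recognise the output as the minimal injective resolution of the costandard object $\nabla_\lambda^\prime$ in the Ringel dual; the multiplicities of indecomposable injectives in such a resolution are controlled by $\Ext$-groups, and this yields the formula.

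First I would settle the support statement and reduce to a finite weight poset. By lower finiteness of $\Lambda$, the set $\Lambda_{\leq\lambda}=\{\nu\in\Lambda\mid\nu\leq\lambda\}$ is a finite order ideal, $\Delta_\lambda$ lies in the highest weight category $\mathcal{C}_{\Lambda_{\leq\lambda}}$, and the indecomposable tilting objects of $\mathcal{C}_{\Lambda_{\leq\lambda}}$ are exactly the $T_\nu$ with $\nu\leq\lambda$. Since $\Delta_\lambda$ admits a tilting coresolution $0\to\Delta_\lambda\to T_\lambda\to\cdots$ inside $\mathcal{C}_{\Lambda_{\leq\lambda}}$, and the minimal representative of a homotopy class is, up to isomorphism of complexes, a direct summand of any representative, the complex $C_\mathrm{min}(\Delta_\lambda)$ is concentrated in non-negative degrees and each $T_i$ is a direct sum of $T_\nu$ with $\nu\leq\lambda$; this gives $[T_i:T_\mu]_\oplus=0$ whenever $i<0$ or $\mu\not\leq\lambda$. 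For the remaining cases fix a finite order ideal $\Gamma\subseteq\Lambda$ with $\lambda\in\Gamma$. As $\Tilt(\mathcal{C}_\Gamma)$ consists of the direct sums of the $T_\mu$ with $\mu\in\Gamma$ and is a full subcategory of $\Tilt(\mathcal{C})$, while $\Delta_\lambda\in\mathcal{C}_\Gamma$, uniqueness of minimal complexes shows that $C_\mathrm{min}(\Delta_\lambda)$ is also the minimal tilting complex of $\Delta_\lambda$ computed inside $\mathcal{C}_\Gamma$. It therefore suffices to prove the formula after replacing $\mathcal{C}$ by the highest weight category $\mathcal{C}_\Gamma$, which has finite weight poset; from now on I assume this and write $C_\mathrm{min}(\Delta_\lambda)=(T_\bullet,d_\bullet)$.

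The main input I would use is the dual form of classical Ringel duality: $R_{\mathcal{C}_\Gamma}^\nabla=\Hom_{\mathcal{C}_\Gamma}(-,T^\Gamma)^*$, where $T^\Gamma=\bigoplus_{\mu\in\Gamma}T_\mu$ is the characteristic tilting object, restricts to an equivalence of additive categories from $\Tilt(\mathcal{C}_\Gamma)$ onto the full subcategory of injective objects of $\mathcal{C}_\Gamma^\prime$, carrying $T_\mu$ to the injective hull $I_\mu^\prime$ of $L_\mu^\prime$. Applying $R_{\mathcal{C}_\Gamma}^\nabla$ term by term to $C_\mathrm{min}(\Delta_\lambda)$ produces a bounded complex $I_\bullet$ of injective objects of $\mathcal{C}_\Gamma^\prime$, concentrated in non-negative degrees, with $[I_i:I_\mu^\prime]_\oplus=[T_i:T_\mu]_\oplus$ for all $i$ and $\mu$; moreover $I_\bullet$ is again minimal, because an additive equivalence identifies the radicals of morphism spaces and hence carries minimal complexes to minimal complexes. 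I then claim that $I_\bullet$ is the minimal injective resolution of $\nabla_\lambda^\prime$. Granting this, minimality of $I_\bullet$ forces each of its differentials to induce the zero map on $\Hom_{\mathcal{C}_\Gamma^\prime}(L_\mu^\prime,-)$, so that $\dim\Ext_{\mathcal{C}_\Gamma^\prime}^i(L_\mu^\prime,\nabla_\lambda^\prime)=\dim\Hom_{\mathcal{C}_\Gamma^\prime}(L_\mu^\prime,I_i)=[I_i:I_\mu^\prime]_\oplus=[T_i:T_\mu]_\oplus$ for every $i\geq0$; this is the asserted identity, and, read backwards, it confirms that the right-hand side does not depend on the choice of $\Gamma$.

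It remains to prove the claim, and this is the step I expect to require the most care: that feeding $C_\mathrm{min}(\Delta_\lambda)$ — which a priori is merely homotopy equivalent to, not a resolution of, $\Delta_\lambda$ — term by term into $R_{\mathcal{C}_\Gamma}^\nabla$ computes the derived functor of $R_{\mathcal{C}_\Gamma}^\nabla$ at $\Delta_\lambda$. The point is that $R_{\mathcal{C}_\Gamma}^\nabla$ is ``exact enough'' along complexes of tilting objects: vector-space duality is exact, so the failure of $R_{\mathcal{C}_\Gamma}^\nabla$ to be exact is governed by the groups $\Ext_{\mathcal{C}_\Gamma}^{>0}(-,T^\Gamma)$, and these vanish on every tilting object (being $\Delta$-filtered, against the $\nabla$-filtered $T^\Gamma$) and on $\Delta_\lambda$ itself. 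Hence $C_\mathrm{min}(\Delta_\lambda)$ is a bounded complex of objects acyclic for $R_{\mathcal{C}_\Gamma}^\nabla$ that is isomorphic to $\Delta_\lambda$ in $D^b(\mathcal{C}_\Gamma)$, and by the standard comparison of derived functors with acyclic resolutions, $I_\bullet=R_{\mathcal{C}_\Gamma}^\nabla(C_\mathrm{min}(\Delta_\lambda))$ is isomorphic in $D^b(\mathcal{C}_\Gamma^\prime)$ to $R_{\mathcal{C}_\Gamma}^\nabla(\Delta_\lambda)=\nabla_\lambda^\prime$, concentrated in degree $0$. Thus $I_\bullet$ is a bounded minimal complex of injectives of $\mathcal{C}_\Gamma^\prime$, concentrated in non-negative degrees and isomorphic in the derived category to $\nabla_\lambda^\prime$; by uniqueness of minimal complexes it is the minimal injective resolution $0\to\nabla_\lambda^\prime\to I_0\to I_1\to\cdots$ of $\nabla_\lambda^\prime$, which establishes the claim.
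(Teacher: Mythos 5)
Your proof is correct and takes essentially the same approach as the paper: reduce to the finite weight poset $\Gamma$, observe that $C_\mathrm{min}(\Delta_\lambda)$ is a minimal tilting coresolution concentrated in non-negative degrees with summands $T_\nu$, $\nu\leq\lambda$, apply the Ringel duality functor $R^\nabla_{\mathcal{C}_\Gamma}$ to obtain the minimal injective coresolution of $\nabla^\prime_\lambda$, and extract the $\Ext$-multiplicities from minimality. The only cosmetic difference is that you justify the passage to the injective resolution via $R^\nabla$-acyclicity and derived-functor comparison, whereas the paper invokes directly the exactness-preserving property of the Ringel duality functors on $\mathcal{C}_\Delta$ from Theorem \ref{thm:Ringeldual}; these are the same observation in different clothing.
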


We also establish upper bounds for the multiplicities of indecomposable tilting objects appearing in the terms of minimal tilting complexes of simple objects in $\mathcal{C}$; see Proposition \ref{prop:minimaltiltingcomplexsimple}.

\begin{Proposition*}
	For $\lambda \in \Lambda$, there exists a bounded complex $C_\lambda = (T_\bullet,d_\bullet)$ of tilting objects in $\mathcal{C}$ with 
	\[ L_\lambda \cong C_\lambda \qquad \text{in } D^b(\mathcal{C}) \]
	and such that the terms of $C_\lambda$ satisfy $[ T_i : T_\mu ]_\oplus = 0$ for all $i \in \Z$ and $\mu \in \Lambda$ with $\mu \nleq \lambda$ and
	\[ [ T_i : T_\mu ]_\oplus = \sum_{\nu \leq \lambda} ~ \sum_{k-j=i} \dim \Ext_\mathcal{C}^j(L_\lambda,\nabla_\nu) \cdot \dim \Ext_{\mathcal{C}_\Gamma^\prime}^k(L_\mu^\prime,\nabla_\nu^\prime) \]
	for all $i \in \Z$ and $\mu \in \Lambda$ with $\mu \leq \lambda$ and for any finite order ideal $\Gamma \subseteq \Lambda$ with $\lambda \in \Gamma$.
	Furthermore, the minimal tilting complex $C_\mathrm{min}(L_\lambda)$ is a direct summand of $C_\lambda$ in the category of complexes in $\Tilt(\mathcal{C})$.
\end{Proposition*}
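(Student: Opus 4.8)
The plan is to build $C_\lambda$ by "convolving" the minimal tilting complexes $C_\mathrm{min}(\Delta_\nu)$ along a resolution of $L_\lambda$ by standard objects in the derived category, and then to read off the multiplicities from Lemmas \ref{lem:minimalcomplex} and \ref{lem:minimaltiltingcomplexstandard}. Since minimal tilting complexes and the groups $\Ext_\mathcal{C}^\bullet(L_\lambda,\nabla_\nu)$, $\Ext_{\mathcal{C}_\Gamma^\prime}^\bullet(L_\mu^\prime,\nabla_\nu^\prime)$ do not change when $\mathcal{C}$ is replaced by the Serre subcategory $\mathcal{C}_\Gamma$, I would first reduce to the case $\Lambda=\Gamma$ finite; then $\mathcal{C}$ has finite global dimension and a characteristic tilting object $T$, and $\Delta:=\bigoplus_{\nu\in\Gamma}\Delta_\nu$ is a classical generator of $D^b(\mathcal{C})$ because every projective object is $\Delta$-filtered. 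The key claim is then: \emph{there is a bounded "twisted complex" (Postnikov system) $M_\bullet$ whose $j$-th layer is $\bigoplus_\nu\Delta_\nu^{\oplus m_{j,\nu}}$ with $m_{j,\nu}=\dim\Ext_\mathcal{C}^j(L_\lambda,\nabla_\nu)$, and with $M_\bullet\cong L_\lambda$ in $D^b(\mathcal{C})$.} To produce it one can pass through the equivalence $R\Hom_\mathcal{C}(\Delta,-)\colon D^b(\mathcal{C})\xrightarrow{\sim}\mathrm{perf}(\mathcal{E})$, where $\mathcal{E}=R\End_\mathcal{C}(\Delta)$ is a dg algebra with $H^{<0}(\mathcal{E})=0$ and $H^0(\mathcal{E})=\End_\mathcal{C}(\Delta)$ whose simple modules $S_\nu$ are indexed by $\nu\in\Gamma$ and correspond to $\nabla_\nu$ (since $H^\bullet R\Hom_\mathcal{C}(\Delta,\nabla_\nu)=\bigoplus_\mu\Ext_\mathcal{C}^\bullet(\Delta_\mu,\nabla_\nu)$ is one-dimensional, in degree $0$, supported at $\nu$): a minimal semifree model of $R\Hom_\mathcal{C}(\Delta,L_\lambda)$ has its free generators counted by $\dim\Hom_{\mathrm{perf}(\mathcal{E})}(R\Hom_\mathcal{C}(\Delta,L_\lambda),S_\nu[j])=\dim\Ext_\mathcal{C}^j(L_\lambda,\nabla_\nu)$, and transporting it back gives $M_\bullet$. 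Boundedness of $M_\bullet$ follows from finiteness of global dimension (so $m_{j,\nu}=0$ for $j\gg0$), and $m_{j,\nu}=0$ for $\nu\nleq\lambda$ because $\Ext_\mathcal{C}^\bullet(L_\lambda,\nabla_\nu)=0$ in that case. (A hands-on alternative: construct $M_\bullet$ by iterated mapping cones, starting from $\Delta_\lambda\twoheadrightarrow L_\lambda$ and repeatedly taking minimal right $\mathrm{add}\{\Delta_\nu\}$-approximations; here the point is that any component of a differential between direct sums of standards is a sum of non-isomorphisms $\Delta_\kappa\to\Delta_{\kappa'}$ with $\kappa<\kappa'$, so applying $\Hom_\mathcal{C}(-,\nabla_\nu)$ annihilates all differentials and pins down the multiplicities.)

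With $M_\bullet$ at hand I would define $C_\lambda$ by replacing each standard object $\Delta_\nu$ occurring in $M_\bullet$ with the bounded complex of tilting objects $C_\mathrm{min}(\Delta_\nu)$: since $C_\mathrm{min}(\Delta_\nu)\cong\Delta_\nu$ in $D^b(\mathcal{C})$ and lies in $K^b(\Tilt(\mathcal{C}))$, the structure maps of the twisted complex $M_\bullet$ lift to genuine chain maps and homotopies (organized inductively via Lemma \ref{lem:triangle}), and the associated total complex $C_\lambda$ is a bounded complex of tilting objects with $C_\lambda\cong L_\lambda$ in $D^b(\mathcal{C})$. For the terms, Lemma \ref{lem:minimalcomplex} gives $C_\mathrm{min}\big(\bigoplus_\nu\Delta_\nu^{\oplus m_{j,\nu}}\big)=\bigoplus_\nu C_\mathrm{min}(\Delta_\nu)^{\oplus m_{j,\nu}}$, so writing $C_\lambda=(T_\bullet,d_\bullet)$ one gets
\[ [T_i:T_\mu]_\oplus=\sum_\nu\sum_j m_{j,\nu}\cdot\big[C_\mathrm{min}(\Delta_\nu)_{i+j}:T_\mu\big]_\oplus . \]
Substituting $m_{j,\nu}=\dim\Ext_\mathcal{C}^j(L_\lambda,\nabla_\nu)$ and $[C_\mathrm{min}(\Delta_\nu)_k:T_\mu]_\oplus=\dim\Ext_{\mathcal{C}_\Gamma^\prime}^k(L_\mu^\prime,\nabla_\nu^\prime)$ from Lemma \ref{lem:minimaltiltingcomplexstandard} — which also gives $[C_\mathrm{min}(\Delta_\nu)_k:T_\mu]_\oplus=0$ unless $k\geq0$ and $\mu\leq\nu$ — yields exactly the asserted formula (with $i=k-j$) together with the vanishing $[T_i:T_\mu]_\oplus=0$ for $\mu\nleq\lambda$, since every $\nu$ that contributes satisfies $\mu\leq\nu\leq\lambda$.

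Finally, $C_\mathrm{min}(L_\lambda)$ is a direct summand of $C_\lambda$ in the category of complexes over $\Tilt(\mathcal{C})$ by the general structure theory recalled in Subsection \ref{subsec:minimalcomplexes}: every bounded complex over the Krull--Schmidt category $\Tilt(\mathcal{C})$ decomposes as a direct sum of a minimal complex and a null-homotopic one, and as $C_\lambda$ and $C_\mathrm{min}(L_\lambda)$ represent the same object $L_\lambda$ of $D^b(\mathcal{C})$ under $\mathfrak{T}$, the minimal summand of $C_\lambda$ is isomorphic to $C_\mathrm{min}(L_\lambda)$.

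The main obstacle is the construction of $M_\bullet$. Because $\Ext_\mathcal{C}^{>0}(\Delta_\mu,\Delta_\nu)$ need not vanish, the standard objects do not form a silting subcategory, so $L_\lambda$ is in general \emph{not} quasi-isomorphic to an honest complex of direct sums of standards and one is forced to work with a twisted complex / minimal dg model; the two things that require care are that this model is bounded (this is where finiteness of global dimension enters) and that its layer multiplicities are precisely $\dim\Ext_\mathcal{C}^j(L_\lambda,\nabla_\nu)$ (this is where the orthogonality $\Ext_\mathcal{C}^\bullet(\Delta_\mu,\nabla_\nu)=\delta_{\mu\nu}\,\kk$, combined with minimality, is used). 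Everything after $M_\bullet$ — the convolution, the multiplicity bookkeeping, and the direct-summand statement — is then routine.
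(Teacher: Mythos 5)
Your global plan matches the paper's: produce a Postnikov system for $L_\lambda$ whose $j$-th layer is $\bigoplus_\nu\Delta_\nu^{\oplus m_{j,\nu}}$ with $m_{j,\nu}=\dim\Ext_\mathcal{C}^j(L_\lambda,\nabla_\nu)$, convolve it with the complexes $C_\mathrm{min}(\Delta_\nu)$, and read off the term multiplicities from Lemma \ref{lem:minimaltiltingcomplexstandard}. The convolution, the bookkeeping, the vanishing for $\mu\nleq\lambda$ via Lemma \ref{lem:Extvanishingsimplecostandard}, and the direct-summand claim via Lemma \ref{lem:minimaltiltingcomplex} are all correct and are exactly what the paper does. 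The one place where you substitute a different argument is the construction of the Postnikov system itself; the paper isolates this as Proposition \ref{prop:trianglestandardresolution} and proves it by a self-contained induction: pick a weight $\lambda$ maximal in the support of $\bigoplus_j H^j(X_i)$, use Lemma \ref{lem:standardcostandardcomposition} to produce $\hat f\colon\Delta_\lambda[-m]\to X_i$ with a non-vanishing composite to $\nabla_\lambda[-m]$, take the cone, and check that a composition multiplicity in the cohomology strictly drops (so the process terminates) while $\dim\Hom_{D^b(\mathcal{C})}^\bullet(-,\nabla_\mu)$ drops by exactly $\delta_{\lambda,\mu}\cdot v^{-m}$.

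Your replacement of that step --- pass to the dg algebra $\mathcal{E}=R\End_\mathcal{C}(\Delta)$ and take a minimal semifree model of $R\Hom_\mathcal{C}(\Delta,L_\lambda)$ --- is where the gap sits. The existence and generator-counting property of such a minimal perfect model are asserted without proof or reference, and they are not routine facts in the generality needed here: $\mathcal{E}$ is coconnective ($H^{>0}(\mathcal{E})$ generally non-zero) and $H^0(\mathcal{E})=\End_\mathcal{C}(\Delta)$ is directed but far from semisimple, so one would have to set up a twisted-complex analogue of the Gaussian elimination of Lemma \ref{lem:minimalcomplex}, verify Krull--Schmidt-ness for the relevant additive dg category, and establish that the free-generator multiplicities of the minimal model equal $\dim\Hom_{\mathrm{perf}(\mathcal{E})}(-,S_\nu[j])$. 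That is precisely the work that Proposition \ref{prop:trianglestandardresolution} does from first principles, and which your write-up currently outsources to an unstated theorem. Your parenthetical ``hands-on alternative'' is not yet a proof either: in a genuine Postnikov system the twisting maps live in $\Ext^{\geq 1}$ between shifted standards, not just in $\Hom$, so the claim that $\Hom_\mathcal{C}(-,\nabla_\nu)$ ``annihilates all differentials'' does not directly apply; moreover a ``minimal right approximation by direct sums of standards'' of an object of $D^b(\mathcal{C})$ is not a defined notion, and you would still need a termination argument. Either route can be completed, but doing so amounts to proving the paper's Proposition \ref{prop:trianglestandardresolution}.
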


In many specific cases, the complex $C_\lambda$ from the preceding proposition actually coincides with the minimal tilting complex $C_\mathrm{min}(L_\lambda)$.
This is the case, for instance, when $\mathcal{C}$ is a block of the BGG category $\mathcal{O}$ for a complex simple Lie algebra, a (suitable truncation of a) block of category $\mathcal{O}$ for an affine Kac-Moody algebra, or a block of a quantum group at a root of unity.
More specifically, in all of these cases, we can identify the dimensions of the $\Ext$-groups that appear in the preceding lemma and proposition with the coefficients of certain Kazhdan-Lusztig polynomials, and parity considerations then imply that the complex $C_\lambda$ in the proposition is minimal.
We state below special cases of our results for blocks of category $\mathcal{O}$ for complex semisimple Lie algebras and for blocks of quantum groups at roots of unity, but we first need to introduce some more notation.

Let $\mathfrak{g}$ be a complex semisimple Lie algebra with Borel subalgebra $\mathfrak{b}$ and Cartan subalgebra $\mathfrak{h} \subseteq \mathfrak{b}$.
For $\lambda \in \mathfrak{h}^*$, let $\Delta_\lambda$, $\nabla_\lambda$, $L_\lambda$ and $T_\lambda$ be the Verma module, the dual Verma module, the simple $\mathfrak{g}$-module and the indecomposable tilting $\mathfrak{g}$-module of highest weight $\lambda$, respectively, in the BGG category $\mathcal{O}$ of finitely generated $\mathfrak{g}$-modules that are locally $\mathfrak{b}$-finite and admit a weight space decomposition with respect to $\mathfrak{h}$.
We write $W \subseteq \GL(\mathfrak{h}^*)$ for the Weyl group of $\mathfrak{g}$ and $S \subseteq W$ for the set of simple reflections with respect to $\mathfrak{b}$, and we denote by $Q \subseteq X \subseteq \mathfrak{h}^*$ the root lattice and the weight lattice of $\mathfrak{g}$, respectively.
Let $\rho$ be the half-sum of all positive roots of $\mathfrak{g}$ with respect to $\mathfrak{b}$ and consider the dot action of $W$ on $\mathfrak{h}^*$ given by $w \Cdot \lambda = w( \lambda + \rho ) - \rho$ for $w \in W$ and $\lambda \in \mathfrak{h}^*$.
Our normalization for Kazhdan-Lusztig polynomials is as in \cite{SoergelKL}.

\begin{Theorem*}
	Let $x,y \in W$ and let $\lambda \in X$ be a strictly anti-dominant weight whose stabilizer with respect to the dot action of $W$ on $\mathfrak{h}^*$ is trivial.
	Then we have
	\[ \sum_{i \in \Z} [ C_\mathrm{min}(\Delta_{x\Cdot\lambda})_i : T_{y\Cdot\lambda} ]_\oplus \cdot v^i = h^{x,y} \]
	and
	\[ \sum_{i \in \Z} [ C_\mathrm{min}(L_{x\Cdot\lambda})_i : T_{y\Cdot\lambda} ]_\oplus \cdot v^i = \sum_{z \in W } \overline{h_{z,x}} \cdot h^{z,y} , \]
	where $h_{y,x}$ is the Kazhdan-Lusztig polynomial and $h^{x,y}$ is the inverse Kazhdan-Lusztig polynomial corresponding to $x$ and $y$, and where we write $\overline{\phantom{A}} \colon \Z[v^{\pm 1}] \to \Z[v^{\pm 1}]$ for the unique ring homomorphism with $v \mapsto v^{-1}$.
\end{Theorem*}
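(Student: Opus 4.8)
The plan is to apply Lemma~\ref{lem:minimaltiltingcomplexstandard} and Proposition~\ref{prop:minimaltiltingcomplexsimple} to the block $\mathcal{O}_\lambda$ of category $\mathcal{O}$ containing $L_\lambda$, and to feed in two facts about regular integral blocks of $\mathcal{O}$. Since $\lambda$ has trivial stabilizer, the linkage class $W\Cdot\lambda$ is in bijection with $W$ via $w\mapsto w\Cdot\lambda$, so $\mathcal{O}_\lambda$ is a \emph{finite} highest weight category; since $\lambda$ is anti-dominant, $\Delta_\lambda=L_\lambda$ is simple, $\lambda$ is minimal in the weight poset, and $w\mapsto w\Cdot\lambda$ identifies the Bruhat order on $W$ with the highest weight order on $W\Cdot\lambda$. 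We may thus take $\Gamma=W\Cdot\lambda$, so that $\mathcal{C}'_\Gamma$ is the full Ringel dual $\mathcal{O}_\lambda'$. The first fact is the $\Ext$--Kazhdan--Lusztig dictionary
\[
\sum_{i\in\Z}\dim\Ext^i_{\mathcal{O}_\lambda}(L_{x\Cdot\lambda},\nabla_{z\Cdot\lambda})\,v^i=h_{z,x}\qquad(x,z\in W),
\]
which follows from the Kazhdan--Lusztig conjecture and the Koszulity of category $\mathcal{O}$. The second fact is Soergel's Ringel self-duality: $\mathcal{O}_\lambda'$ is again a regular integral block of category $\mathcal{O}$, carrying the opposite highest weight order, and the equivalence relabels the weights by an order-reversing automorphism of the Bruhat poset (the twist by $w_0$), sending $L'_{w\Cdot\lambda}$ and $\nabla'_{w\Cdot\lambda}$ to $L_{w_0w\Cdot\lambda}$ and $\nabla_{w_0w\Cdot\lambda}$.

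For the first formula, Lemma~\ref{lem:minimaltiltingcomplexstandard} gives
\[
\sum_{i\in\Z}[C_\mathrm{min}(\Delta_{x\Cdot\lambda})_i:T_{y\Cdot\lambda}]_\oplus\,v^i=\sum_{i\in\Z}\dim\Ext^i_{\mathcal{O}_\lambda'}(L'_{y\Cdot\lambda},\nabla'_{x\Cdot\lambda})\,v^i,
\]
and applying the Ringel self-duality and then the dictionary \emph{for the block $\mathcal{O}_\lambda'$} rewrites the right-hand side as $h_{w_0x,\,w_0y}$. It then remains to invoke the classical identity $h_{w_0x,\,w_0y}=h^{x,y}$ between Kazhdan--Lusztig and inverse Kazhdan--Lusztig polynomials in Soergel's normalization \cite{SoergelKL}; as a check, both sides vanish unless $y\leq x$ in the Bruhat order, matching the support condition in Lemma~\ref{lem:minimaltiltingcomplexstandard}.

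For the second formula, Proposition~\ref{prop:minimaltiltingcomplexsimple} provides a bounded tilting complex $C_{x\Cdot\lambda}=(T_\bullet,d_\bullet)$ with $L_{x\Cdot\lambda}\cong C_{x\Cdot\lambda}$ in $D^b(\mathcal{O}_\lambda)$ and
\[
[T_i:T_{y\Cdot\lambda}]_\oplus=\sum_{z\leq x}\;\sum_{k-j=i}\dim\Ext^j_{\mathcal{O}_\lambda}(L_{x\Cdot\lambda},\nabla_{z\Cdot\lambda})\cdot\dim\Ext^k_{\mathcal{O}_\lambda'}(L'_{y\Cdot\lambda},\nabla'_{z\Cdot\lambda}).
\]
Passing to generating functions and writing $v^i=v^k\cdot v^{-j}$, the inner sum over $j$ contributes $\overline{h_{z,x}}$ by the dictionary for $\mathcal{O}_\lambda$ and the sum over $k$ contributes $h^{z,y}$ by the first formula (applied to $\mathcal{O}_\lambda'$ with the pair $z,y$), so that $\sum_{i}[T_i:T_{y\Cdot\lambda}]_\oplus v^i=\sum_{z\in W}\overline{h_{z,x}}\cdot h^{z,y}$. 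To conclude I would show that $C_{x\Cdot\lambda}$ is already minimal, hence $C_{x\Cdot\lambda}\cong C_\mathrm{min}(L_{x\Cdot\lambda})$ as complexes. This is a parity argument: in Soergel's normalization every exponent of $h_{z,x}$ is $\equiv\ell(x)-\ell(z)$ and every exponent of $h^{z,y}$ is $\equiv\ell(z)-\ell(y)$ modulo $2$, so by the dictionary the groups above vanish unless $j\equiv\ell(x)-\ell(z)$ and $k\equiv\ell(z)-\ell(y)$ modulo $2$; hence $T_{y\Cdot\lambda}$ occurs in $C_{x\Cdot\lambda}$ only in homological degrees $i\equiv\ell(x)+\ell(y)$ modulo $2$. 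As neighbouring degrees have opposite parity, no indecomposable tilting object occurs in two consecutive terms of $C_{x\Cdot\lambda}$, so every differential $d_i$ has all of its matrix entries in the radical and $C_{x\Cdot\lambda}$ is minimal.

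I expect the main obstacle to be the bookkeeping concealed in Soergel's Ringel self-duality: one must pin down exactly how it relabels standard, costandard and simple objects (the twist by $w_0$, acting on the left or the right) and then reconcile this with Soergel's conventions for Kazhdan--Lusztig and inverse Kazhdan--Lusztig polynomials, so that the identity $h_{w_0x,\,w_0y}=h^{x,y}$ and the parity statements come out with the correct indices and without stray signs or bars. Once the two facts are stated in matching normalizations, the remaining steps are formal.
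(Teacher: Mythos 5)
Your proof is correct and takes essentially the same route as the paper: the statement is the special case $I = J = \varnothing$ of Theorem \ref{thm:minimaltiltingcomplexescategoryO}, and your chain of reasoning --- Lemma \ref{lem:minimaltiltingcomplexstandard} combined with the $\Ext$--Kazhdan--Lusztig dictionary and Ringel self-duality of regular integral blocks (with the $w_0$-twist on labels) for the Verma case, then Proposition \ref{prop:minimaltiltingcomplexsimple} via Corollary \ref{cor:minimaltiltingcomplexsimple} and the parity argument for the simple case --- matches the paper's proof of that theorem specialized to a regular block. The only cosmetic differences are that the paper sources the dictionary from \cite{GruberSingularParabolic} and the Ringel self-duality with its precise relabeling from \cite{CoulembierMazorchukDualities}, where you invoke the Kazhdan--Lusztig conjecture plus Koszulity and Soergel's self-duality directly; these amount to the same inputs, and your caution about pinning down the side of the $w_0$-twist and the normalizations is exactly where the bookkeeping lies.
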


In Subsection \ref{subsec:categoryO}, we also discuss the case of simple $\mathfrak{g}$-modules and Verma modules of singular highest weight (i.e.\ where the stabilizer of the highest weight with respect to the dot action of $W$ is non-trivial) and of parabolic versions of category $\mathcal{O}$; see Theorem \ref{thm:minimaltiltingcomplexescategoryO}.
Moreover, we establish similar results for blocks of category $\mathcal{O}$ for an affine Kac-Moody algebra in Subsection \ref{subsec:KacMoody}.

Now assume that $\mathfrak{g}$ is a simple Lie algebra, fix a positive integer $\ell$ and let $U_\zeta = U_\zeta(\mathfrak{g})$ be the quantum group corresponding to $\mathfrak{g}$, specialized at an $\ell$-th root of unity $\zeta \in \C$, as defined for example in \cite[Section 7]{TanisakiCharacterFormulas}.
In order to avoid technicalities, we further assume here that $\ell$ is odd and that $\ell$ is not divisible by $3$ if $\mathfrak{g}$ is of type $\mathrm{G}_2$.
(We consider more general values of $\ell$ in Subsection \ref{subsec:quantum}.)
For a dominant weight $\lambda \in X$, let $L^\zeta_\lambda$, $\Delta^\zeta_\lambda$, $\nabla^\zeta_\lambda$ and $T^\zeta_\lambda$ be the simple $U_\zeta$-module, the Weyl module, the induced module and the indecomposable tilting $U_\zeta$-module of highest weight $\lambda$, respectively, in the category $\Rep(U_\zeta)$ of finite-dimensional $U_\zeta$-modules that admit a weight space decomposition.
The affine Weyl group of $U_\zeta$ is the semidirect product $W_\ell = Q \rtimes W$, and we consider its $\ell$-dilated dot action on $X$, which is given by
\[ t_\gamma w \Cdot_\ell \lambda = w( \lambda + \rho ) - \rho + \ell \gamma , \]
for $\gamma \in Q$, $w \in W$ and $\lambda \in X$, where we write $\gamma \mapsto t_\gamma$ for the canonical embedding $Q \to W_\ell$.
The \emph{fundamental dominant $\ell$-alcove} is a canonical subset $C_\ell$ of $X_\R = X \otimes_\Z \R$ whose closure is a fundamental domain for the $\ell$-dilated dot action of $W_\ell$ on $X_\R$ (see Subsection \ref{subsec:quantum}), and $W_\ell$ is a Coxeter group with simple reflections given by the reflections in the walls of $C_\ell$.
Furthermore, the finite Weyl group $W$ is a parabolic subgroup of $W_\ell$ and we write $\prescript{S}{}{W_\ell}$ for the set of elements $x \in W_\ell$ that have minimal length in the coset $W x$.
Finally, according to \cite{KL12,KL34}, there is a \emph{Kazhdan-Lusztig functor} $F_\ell$ from a certain category of representations of an affine Lie algebra to $\Rep(U_\zeta)$, and $F_\ell$ is an equivalence of categories when $\ell$ is sufficiently large.
(See Subsection \ref{subsec:quantum} for more details.)
With this notation in place, we can state a special case of Theorem \ref{thm:minimaltiltingcomplexesquantum}.

\begin{Theorem*}
	Suppose that the Kazhdan-Lusztig functor $F_\ell$ is an equivalence.
	Then for $x,y \in \prescript{S}{}{W_\ell}$ and for a weight $\lambda \in C_\ell \cap X$, we have
	\[ \sum_{i \in \Z} [ C_\mathrm{min}(\Delta^\zeta_{x\Cdot_\ell\lambda})_i : T^\zeta_{y\Cdot_\ell\lambda} ]_\oplus \cdot v^i = n^{x,y} \]
	and
	\[ \sum_{i \in \Z} [ C_\mathrm{min}(L^\zeta_{x\Cdot_\ell\lambda})_i : T^\zeta_{y\Cdot_\ell\lambda} ]_\oplus \cdot v^i = \sum_{z \in W } \overline{m_{z,x}} \cdot n^{z,y} , \]
	where $m_{y,x}$ is the spherical Kazhdan-Lusztig polynomial and $n^{x,y}$ is the inverse anti-spherical Kazhdan-Lusztig polynomial corresponding to $x$ and $y$ with respect to the parabolic subgroup $W$ of $W_\ell$, and where we write $\overline{\phantom{A}} \colon \Z[v^{\pm 1}] \to \Z[v^{\pm 1}]$ for the unique ring homomorphism with $v \mapsto v^{-1}$.
\end{Theorem*}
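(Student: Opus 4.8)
\emph{Plan.} The strategy is to pass, via the Kazhdan-Lusztig equivalence $F_\ell$, to a regular block (respectively, a finite truncation of one) of category $\mathcal{O}$ at negative level for the affine Kac-Moody algebra attached to $\mathfrak{g}$, and there to combine the general results of Lemma~\ref{lem:minimaltiltingcomplexstandard} and Proposition~\ref{prop:minimaltiltingcomplexsimple} with the known description of $\Ext$-groups in terms of Kazhdan-Lusztig polynomials. Fix $\lambda \in C_\ell \cap X$; since $C_\ell$ is an open alcove, the stabilizer of $\lambda$ under $\Cdot_\ell$ is trivial, and by the linkage principle the Serre subcategory $\mathcal{C} \subseteq \Rep(U_\zeta)$ generated by the simple modules $L^\zeta_{x\Cdot_\ell\lambda}$ with $x \in \prescript{S}{}{W_\ell}$ is a lower finite highest weight category whose weight poset is $\prescript{S}{}{W_\ell}$ via $x \mapsto x\Cdot_\ell\lambda$, ordered by the Bruhat order of $W_\ell$ restricted to $\prescript{S}{}{W_\ell}$ (which agrees with the dominance order on the weights). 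Fix also a finite order ideal $\Gamma \subseteq \prescript{S}{}{W_\ell}$ with $x \in \Gamma$, for instance $\Gamma = \{ w \in \prescript{S}{}{W_\ell} : w \leq x \}$. By the cited Lemma and Proposition, the statement then reduces to establishing the two identities
\[ \sum_{j \in \Z} \dim \Ext^j_{\mathcal{C}}\big(L^\zeta_{x\Cdot_\ell\lambda}, \nabla^\zeta_{z\Cdot_\ell\lambda}\big) \cdot v^j = m_{z,x} \qquad\text{and}\qquad \sum_{k \in \Z} \dim \Ext^k_{\mathcal{C}_\Gamma^\prime}\big(L_{y\Cdot_\ell\lambda}^\prime, \nabla_{z\Cdot_\ell\lambda}^\prime\big) \cdot v^k = n^{z,y} \]
for all relevant $z \in \prescript{S}{}{W_\ell}$ (and the fixed $x,y$), as I now explain.

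\emph{Standard objects.} By Lemma~\ref{lem:minimaltiltingcomplexstandard}, the multiplicity $[C_\mathrm{min}(\Delta^\zeta_{x\Cdot_\ell\lambda})_i : T^\zeta_{y\Cdot_\ell\lambda}]_\oplus$ equals $\dim \Ext^i_{\mathcal{C}_\Gamma^\prime}(L_{y\Cdot_\ell\lambda}^\prime, \nabla_{x\Cdot_\ell\lambda}^\prime)$ when $i \geq 0$ and $y \leq x$, and vanishes otherwise. In the normalization of \cite{SoergelKL} the polynomial $n^{x,y}$ lies in $\Z_{\geq 0}[v]$ and vanishes unless $y \leq x$, so the second identity above, taken with $z = x$, is precisely the first formula of the theorem.

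\emph{Simple objects.} By Proposition~\ref{prop:minimaltiltingcomplexsimple} there is a bounded complex $C_\lambda = (T_\bullet, d_\bullet)$ of tilting objects in $\mathcal{C}$ with $C_\lambda \cong L^\zeta_{x\Cdot_\ell\lambda}$ in $D^b(\mathcal{C})$ and with $[T_i : T^\zeta_{y\Cdot_\ell\lambda}]_\oplus = 0$ for $y \nleq x$; feeding the two $\Ext$-identities into the formula of that Proposition turns the generating function $\sum_i [T_i : T^\zeta_{y\Cdot_\ell\lambda}]_\oplus v^i$ of its remaining multiplicities precisely into the right-hand side $\sum_z \overline{m_{z,x}} \cdot n^{z,y}$ of the second formula of the theorem. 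Since $C_\mathrm{min}(L^\zeta_{x\Cdot_\ell\lambda})$ is a direct summand of $C_\lambda$ in the category of complexes in $\Tilt(\mathcal{C})$, and since a direct summand of a minimal complex is again minimal while a contractible minimal complex is zero, it suffices to show that $C_\lambda$ is itself minimal; then $C_\lambda \cong C_\mathrm{min}(L^\zeta_{x\Cdot_\ell\lambda})$, which is the second formula. This is a parity argument: in the normalization of \cite{SoergelKL} every non-zero coefficient of $m_{z,x}$ occurs in a degree $\equiv \ell(x) - \ell(z) \pmod 2$ and every non-zero coefficient of $n^{z,y}$ in a degree $\equiv \ell(y) - \ell(z) \pmod 2$, so each monomial of $\overline{m_{z,x}} \cdot n^{z,y}$ has degree $\equiv \ell(x) - \ell(y) \pmod 2$ independently of $z$. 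Hence $[T_i : T^\zeta_{y\Cdot_\ell\lambda}]_\oplus = 0$ unless $i \equiv \ell(x) - \ell(y) \pmod 2$, so no two consecutive terms of $C_\lambda$ share an indecomposable direct summand; thus every differential of $C_\lambda$ is radical, i.e.\ $C_\lambda$ is minimal.

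\emph{The $\Ext$-identities, and the main obstacle.} It remains to prove the two displayed $\Ext$-identities, and this is where the substantive input lies. Using that $F_\ell$ is an equivalence, one transports $\mathcal{C}$ and $\mathcal{C}_\Gamma$ to a regular block, respectively a finite truncation thereof, of category $\mathcal{O}$ at negative level for the affine Kac-Moody algebra of $\mathfrak{g}$, compatibly with standard, costandard, simple and indecomposable tilting objects. This truncated block is Koszul (the affine analogue of the Beilinson-Ginzburg-Soergel theorem), so $\Ext^j(L, \nabla)$ between a simple and a costandard object is concentrated in internal degree $j$; the Kazhdan-Lusztig conjecture for affine Lie algebras, proved by Kashiwara-Tanisaki, then identifies the resulting graded decomposition numbers with parabolic Kazhdan-Lusztig polynomials for the pair $(W, W_\ell)$, and as the dominant weights correspond to the minimal coset representatives $\prescript{S}{}{W_\ell}$ these are the spherical polynomials $m_{z,x}$, which is the first identity. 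For the second one identifies $\mathcal{C}_\Gamma^\prime$ with the corresponding truncated block on the opposite parabolic side and invokes the analogous Koszulity and Kazhdan-Lusztig statement there; on the associated Hecke module, passage to the Ringel dual is the transformation exchanging the Kazhdan-Lusztig basis with the inverse of the dual Kazhdan-Lusztig basis, which converts the spherical parabolic polynomials into the inverse anti-spherical ones $n^{z,y}$. The main obstacle is precisely this last point --- pinning down the exact form of Ringel duality for these truncated affine blocks, and reconciling the bookkeeping of left versus right cosets, of the spherical versus the anti-spherical parabolic module, and of the Soergel normalization (in particular the parity and positivity of $n^{z,y}$ used above) with the abstract description of $\mathcal{C}_\Gamma^\prime$ coming from Lemma~\ref{lem:minimaltiltingcomplexstandard}. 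Once the two $\Ext$-identities are in hand, all the remaining steps are routine; in particular, independence of the answer from the choice of $\Gamma$ is already part of the cited Lemma and Proposition.
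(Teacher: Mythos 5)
Your scaffolding matches the paper's own proof closely: the paper likewise transports the statement across the Kazhdan--Lusztig equivalence to the parabolic category $\mathcal{\tilde O}^{\mathfrak{p}_S}_{\lambda^\ast}$ at negative level for $\mathfrak{\tilde g}$ (this is precisely the content of the proof of Theorem \ref{thm:minimaltiltingcomplexesquantum}, which says the result ``follows from Theorem \ref{thm:minimaltiltingcomplexesKacMoody} and Remark \ref{rem:nonintegralKacMoody} by applying the functor $G = F_\ell \circ \res_\mathfrak{\grave g}^\mathfrak{\tilde g}$''), and it then runs through exactly the reduction you describe: Lemma \ref{lem:minimaltiltingcomplexstandard} for the Weyl-module case, Proposition \ref{prop:minimaltiltingcomplexsimple} together with the parity-vanishing argument of Corollary \ref{cor:minimaltiltingcomplexsimple} (invoking Remark 3.2 in \cite{SoergelKL}) for the simple-module case. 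Your parity argument showing $C_\lambda$ is already minimal is verbatim the one the paper uses.

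The genuine gap, which you flag honestly and correctly, is the pair of $\Ext$-identities themselves --- and in particular the Ringel-dual one. The paper does not rederive them from Koszulity; it quotes them directly. The first identity (Poincar\'e polynomial of $\Ext^\bullet_{\mathcal{\tilde O}^{\mathfrak{p}_J}}(L_{y\Cdot\lambda},\nabla^{\mathfrak{p}_J}_{x\Cdot\lambda})$ equals an (anti-)spherical parabolic Kazhdan--Lusztig polynomial, at both negative and positive level) is Theorem 3.3 of \cite{GruberSingularParabolic}, a companion paper by the same author; and the precise description of the Ringel dual of the truncated block $\mathcal{\tilde O}^{\mathfrak{p}_J}_{\lambda,\leq w}$ as the opposite-level truncated block $\mathcal{\tilde O}^{\mathfrak{p}_J}_{\lambda',\leq w_Jww_I}$, with $R^\nabla$ taking $\Delta^{\mathfrak{p}_J}_{x\Cdot\lambda}$ to $\nabla^{\mathfrak{p}_J}_{w_Jxw_I\Cdot\lambda'}$, is Proposition 3.9 of \cite{ShanVaragnoloVasserot}. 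These two citations are exactly what resolve ``the main obstacle'' you identify --- the coset and normalization bookkeeping and, crucially, the Ringel-duality identification. Your proposed alternate route (Koszulity of the affine block in the style of Beilinson--Ginzburg--Soergel, combined with Kashiwara--Tanisaki) is indeed the mathematics that underlies those cited results, but as written you stop short of carrying it through, and the step you leave open (``pinning down the exact form of Ringel duality for these truncated affine blocks'') is precisely where the substantive work lies. So: same route, correct structure, but the proof is incomplete without supplying the two $\Ext$-computations (or the two citations that encode them).

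One small additional caution: the identification of which parabolic polynomial appears on which side ($n^I$ versus $m_I$, spherical versus anti-spherical, direct versus inverse) is sensitive to the conventions of \cite{SoergelKL} and to whether one works with $\lambda$ of negative or positive level; in the paper's Theorem \ref{thm:minimaltiltingcomplexesquantum} the Ext-groups on the quantum-group side produce $n^I$ and the Ringel-dual side produces $m_I$, whereas you write the reverse. This is recoverable via Proposition 3.7/3.9 in \cite{SoergelKL}, but it is exactly the kind of bookkeeping slippage you yourself warn about, and a complete proof would need to fix the conventions once and for all.
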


One further example considered in Section \ref{sec:examples} is the highest weight category $\Rep(G)$ of finite-dimensional rational representations of a simply-connected simple algebraic group over an algebraically closed field of characteristic $p>0$.
Here, the multiplicities of indecomposable tilting objects appearing in the terms of minimal tilting complexes of standard objects and simple objects are generally not governed by Kazhdan-Lusztig polynomials.

We conclude the introduction by giving an outline of the structure of this article.
In Section \ref{sec:preliminaries}, we summarize some preliminary results about highest weight categories.
Our main objects of study, the minimal tilting complexes, will be introduced in Section \ref{sec:minimaltiltingcomplexes}, where we also establish some of their key properties.
In Section \ref{sec:standardsimple}, we develop techniques for computing the multiplicities of indecomposable tilting objects appearing in the terms of minimal tilting complexes of standard objects and simple objects in highest weight categories, and we apply these techniques in Section \ref{sec:examples} to show that the aforementioned multiplicities can be computed in terms of Kazhdan-Lusztig polynomials in some important examples.
More specifically, we do this for blocks of the BGG category $\mathcal{O}$ of a complex semisimple Lie algebra in Subsection \ref{subsec:categoryO}, for blocks of the category $\mathcal{O}$ of an affine Kac-Moody Lie algebra in Subsection \ref{subsec:KacMoody} and for blocks of quantum groups at roots of unity in Subsection \ref{subsec:quantum}.

\subsection*{Acknowledgments}

This article is partly based on my PhD thesis and I would like to express my deepest gratitude to my advisor Donna Testerman for her help and her suggestions throughout the last years, and also for reading a preliminary version of the article.
I would also like to thank Stephen Donkin, Thorge Jensen and Walter Mazorchuk for helpful discussions and comments.
This work was funded by the Swiss National Science Foundation under the grants FNS 200020\_175571 and FNS 200020\_207730 and by the Singapore MOE grant R-146-000-294-133.

\section{Preliminaries} \label{sec:preliminaries}

Let us start by setting up some notation.
For an additive category $\mathcal{A}$, we write $C(\mathcal{A})$ for the category of (cochain) complexes in $\mathcal{A}$ and $K(\mathcal{A})$ for the homotopy category of $\mathcal{A}$.
Similarly, we write $C^b(\mathcal{A})$ for the category of bounded complexes and $K^b(\mathcal{A})$ for the bounded homotopy category.
If $\mathcal{A}$ is abelian then we further denote by $D^b(\mathcal{A})$ the bounded derived category of $\mathcal{A}$.
Given a complex $X = (X_\bullet,d_\bullet)$ in $\mathcal{A}$, we call $X_i$ the term in homological degree $i$ of $X$.
The homological shift of $X$ by $i \in \Z$ is the complex $X[i] = (X^\prime_\bullet,d^\prime_\bullet)$ with terms $X^\prime_j = X_{i+j}$ and differentials $d^\prime_j = (-1)^i \cdot d_{i+j}$ for $j \in \Z$.
Now suppose that $\mathcal{A}$ is abelian.
For objects $A$ and $B$ of $\mathcal{A}$, the $\Ext$-group of degree $i \in \Z$ is
\[ \Ext_\mathcal{C}^i(A,B) \coloneqq \Hom_{D^b(\mathcal{C})}( A , B[i] ) , \]
where we view $A$ and $B$ as complexes concentrated in degree zero, and the $i$-th cohomology object of a complex $X = (X_\bullet,d_\bullet)$ in $\mathcal{A}$ is
\[ H^i(X) = \ker(d_i) / \im(d_{i-1}) . \]
For more background on derived categories and $\Ext$-groups, we refer the reader to \cite{KrauseDerivedCategory}.

\subsection{Highest weight categories}

Highest weight categories were pioneered by E.~Cline, B.J.~Parshall and L.~Scott in \cite{CPSHighestWeight} in an attempt to unify the study of various categories that naturally appear in Lie theoretic representation theory and beyond.
There is by now an abundance of literature on highest weight categories, but not all authors choose to work with the same set of axioms and finiteness assumptions.
The definition that we shall use here is as follows.

Let $\kk$ be a field and let $\mathcal{C}$ be a finite length $\kk$-linear abelian category.
Further suppose that there is a lower finite%
\footnote{A poset $(\Lambda,\leq)$ is called \emph{lower finite} if the set $\{ \mu \in \Lambda \mid \mu \leq \lambda \}$ is finite for all $\lambda \in \Lambda$.
One can also study \emph{upper finite} highest weight categories, as explained in \cite{BrundanStroppel}, but these may fail to have tilting objects.}
poset $(\Lambda,\leq)$ and a map
\[ \Lambda \longrightarrow \mathrm{Ob}(\mathcal{C}) \qquad \qquad  \lambda \longmapsto L_\lambda \] such that the objects $L_\lambda$, $\lambda \in \Lambda$, form a set of representatives for the isomorphism classes of simple objects in $\mathcal{C}$.
For an order ideal $\Gamma \subseteq \Lambda$, the \emph{truncated category} $\mathcal{C}_\Gamma$ is the Serre subcategory of $\mathcal{C}$ generated by the simple objects $\{ L_\lambda \mid \lambda \in \Gamma \}$, and for $\lambda \in \Lambda$, we set
\[ \mathcal{C}_{\leq \lambda} = \mathcal{C}_{ \{ \mu \in \Lambda \mid \mu \leq \lambda \} } \qquad \text{and} \qquad \mathcal{C}_{< \lambda} = \mathcal{C}_{ \{ \mu \in \Lambda \mid \mu < \lambda \} } . \]

\begin{Definition} \label{def:HWC}
We call $\mathcal{C}$ a \emph{highest weight category} with \emph{weight poset} $(\Lambda,\leq)$ if the following conditions are satisfied for all $\lambda,\mu \in \Lambda$:
\begin{description}[labelindent=.2cm]
	\item[(HW1)] The simple object $L_\lambda$ has a projective cover $\Delta_\lambda \to L_\lambda$ and an injective hull $L_\lambda \to \nabla_\lambda$ in $\mathcal{C}_{\leq \lambda}$.
	\item[(HW2)] We have $\Hom_\mathcal{C}(\Delta_\lambda,\nabla_\lambda) \cong \kk$.
	\item[(HW3)] We have $\Ext_\mathcal{C}^1(\Delta_\lambda,\nabla_\mu)=0$ and $\Ext_\mathcal{C}^2(\Delta_\lambda,\nabla_\mu)=0$.
\end{description}
We call $\Delta_\lambda$ the \emph{standard object} and $\nabla_\lambda$ the \emph{costandard object} of highest weight $\lambda \in \Lambda$.
\end{Definition}

Now suppose that $\mathcal{C}$ is a highest weight category with weight poset $(\Lambda,\leq)$.
Observe that the conditions (HW1) and (HW2) imply that
\[ [ \Delta_\lambda : L_\lambda ] = [ \nabla_\lambda : L_\lambda ] = \dim \Hom_\mathcal{C}( \Delta_\lambda , \nabla_\lambda ) = 1 ; \]
in particular the kernel of $\Delta_\lambda \to L_\lambda$ and the cokernel of $L_\lambda \to \nabla_\lambda$ belong to $\mathcal{C}_{<\lambda}$.
Furthermore, as the $\Hom$-space $\Hom_\mathcal{C}(L_\lambda,L_\lambda)$ naturally embeds into $\Hom_\mathcal{C}(\Delta_\lambda,\nabla_\lambda) \cong \kk$, we have
\[ \Hom_\mathcal{C}(L_\lambda,L_\lambda) \cong \kk \]
for all $\lambda \in \Lambda$.
Since all objects of $\mathcal{C}$ have finite length, this implies that all $\Hom$-spaces in $\mathcal{C}$ are finite-dimensional, whence our definition is equivalent to that of a \emph{lower finite highest weight category} in the terminology of \cite[Definition 3.50]{BrundanStroppel} by Corollary 3.64 in \cite{BrundanStroppel}.
This also implies that for any order ideal $\Gamma \subseteq \Lambda$, the truncated category $\mathcal{C}_\Gamma$ is a highest weight category with weight poset $(\Gamma,\leq_\Gamma)$, where $\leq_\Gamma$ denotes the restriction of $\leq$ to $\Gamma$.
Furthermore, by Lemma 3.48 and Theorem 3.59 in \cite{BrundanStroppel}, the condition (HW3) generalizes to
\begin{equation} \label{eq:extvanishing}
	\Ext_\mathcal{C}^i( \Delta_\lambda , \nabla_\mu ) \cong \begin{cases} \kk & \text{if } \lambda = \mu \text{ and } i=0, \\ 0 & \text{otherwise} \end{cases}
\end{equation}
for all $\lambda,\mu \in \Lambda$ and $i \geq 0$.
This $\Ext$-vanishing property is crucial for proving the following lemma:

\begin{Lemma} \label{lem:Extvanishingsimplecostandard}
	Let $\lambda,\mu \in \Lambda$ and $i \geq 0$ such that $\Ext_\mathcal{C}^i( L_\lambda , \nabla_\mu ) \neq 0$.
	Then there exist $\mu_0,\ldots,\mu_i \in \Lambda$ with $\mu = \mu_0 < \mu_1 < \cdots < \mu_i = \lambda$.
\end{Lemma}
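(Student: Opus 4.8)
The plan is to proceed by induction on $i$, pivoting from the simple object $L_\lambda$ to the standard object $\Delta_\lambda$ so as to exploit the $\Ext$-vanishing property \eqref{eq:extvanishing}.

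For the base case $i=0$, the hypothesis $\Hom_\mathcal{C}(L_\lambda,\nabla_\mu)\neq 0$ provides a nonzero morphism $L_\lambda\to\nabla_\mu$, which is injective because $L_\lambda$ is simple. Since $L_\mu\to\nabla_\mu$ is an injective hull in $\mathcal{C}_{\leq\mu}$, the object $L_\mu$ is an essential subobject of $\nabla_\mu$, so $\soc\nabla_\mu\cong L_\mu$; comparing with the simple subobject $L_\lambda\hookrightarrow\nabla_\mu$ forces $L_\lambda\cong L_\mu$, hence $\lambda=\mu$, and the length-zero chain $\mu=\mu_0=\lambda$ does the job.

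For the inductive step, I would assume $i\geq 1$ together with the statement for $i-1$, and apply $\Hom_\mathcal{C}(-,\nabla_\mu)$ to the short exact sequence $0\to K_\lambda\to\Delta_\lambda\to L_\lambda\to 0$, where $K_\lambda=\ker(\Delta_\lambda\to L_\lambda)$ lies in $\mathcal{C}_{<\lambda}$ (as observed just before the lemma). The resulting long exact sequence contains the segment
\[ \Ext_\mathcal{C}^{i-1}(K_\lambda,\nabla_\mu)\longrightarrow\Ext_\mathcal{C}^i(L_\lambda,\nabla_\mu)\longrightarrow\Ext_\mathcal{C}^i(\Delta_\lambda,\nabla_\mu), \]
and by \eqref{eq:extvanishing} the rightmost term vanishes for $i\geq 1$; hence the first map is surjective and $\Ext_\mathcal{C}^{i-1}(K_\lambda,\nabla_\mu)\neq 0$.

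Finally, I would run a dévissage along a composition series of $K_\lambda$: applying $\Hom_\mathcal{C}(-,\nabla_\mu)$ to a short exact sequence $0\to A\to B\to C\to 0$ shows that $\Ext_\mathcal{C}^{i-1}(B,\nabla_\mu)\neq 0$ forces $\Ext_\mathcal{C}^{i-1}(A,\nabla_\mu)\neq 0$ or $\Ext_\mathcal{C}^{i-1}(C,\nabla_\mu)\neq 0$, so by induction on the length of $K_\lambda$ there is a composition factor $L_\nu$ of $K_\lambda$ with $\Ext_\mathcal{C}^{i-1}(L_\nu,\nabla_\mu)\neq 0$, and $\nu<\lambda$ because $K_\lambda\in\mathcal{C}_{<\lambda}$. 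The inductive hypothesis then yields $\mu_0,\ldots,\mu_{i-1}\in\Lambda$ with $\mu=\mu_0<\cdots<\mu_{i-1}=\nu$, and setting $\mu_i=\lambda$ produces the required chain. I do not anticipate a genuine obstacle; the only slightly delicate points are the identification of $\soc\nabla_\mu$ in the base case and ensuring in the dévissage step that the extracted composition factor lies strictly below $\lambda$, which is exactly what $K_\lambda\in\mathcal{C}_{<\lambda}$ guarantees.
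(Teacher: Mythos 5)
Your proof is correct and follows essentially the same route as the paper: induction on $i$, applying $\Hom_\mathcal{C}(-,\nabla_\mu)$ to the short exact sequence $0 \to \rad\,\Delta_\lambda \to \Delta_\lambda \to L_\lambda \to 0$ and using the $\Ext$-vanishing property \eqref{eq:extvanishing} to kill $\Ext_\mathcal{C}^i(\Delta_\lambda,\nabla_\mu)$. The extra detail you supply (identifying $\soc\,\nabla_\mu$ in the base case and the dévissage along a composition series of $\rad\,\Delta_\lambda$) is exactly what the paper leaves implicit under ``straightforward.''
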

\begin{proof}
	This is straightforward to see by induction on $i$, using the $\Ext$-vanishing property \eqref{eq:extvanishing} and the long exact sequence of $\Ext$-groups that one gets by applying the functor $\Hom_\mathcal{C}(-,\nabla_\mu)$ to the short exact sequence $0 \to \rad \, \Delta_\lambda \to \Delta_\lambda \to L_\lambda \to 0$.
	See also Proposition 6.20 in \cite{Jantzen}.
\end{proof}

By the preceding lemma, $\nabla_\lambda$ is an injective hull of $L_\lambda$
in $\mathcal{C}_\Gamma$ for any order ideal $\Gamma \subseteq \Lambda$ such that $\lambda$ is a maximal element of $\Gamma$, because $\Ext_{\mathcal{C}_\Gamma}^1(L_\mu,\nabla_\lambda) = 0$ for all $\mu \in \Gamma$.
Analogously, one sees that $\Delta_\lambda$ is a projective cover of $L_\lambda$
in $\mathcal{C}_\Gamma$ for any order ideal $\Gamma \subseteq \Lambda$ such that $\lambda$ is maximal in $\Gamma$.
We use these observations to prove the following lemma.

\begin{Lemma} \label{lem:standardcostandardcomposition}
	Let $M$ be an object of $\mathcal{C}$ and let $\lambda \in \Lambda$ be maximal among the highest weights of the composition factors of $M$.
	Then we have $\Hom_\mathcal{C}(\Delta_\lambda,M) \neq 0$, and for any non-zero homomorphism $f \colon \Delta_\lambda \to M$, there exists a homomorphism $g \colon M \to \nabla_\lambda$ such that $g \circ f \neq 0$.
\end{Lemma}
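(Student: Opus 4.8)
The plan is to use the two structural observations just established: that $\Delta_\lambda$ is a projective cover of $L_\lambda$ in $\mathcal{C}_\Gamma$ and $\nabla_\lambda$ is an injective hull of $L_\lambda$ in $\mathcal{C}_\Gamma$, whenever $\lambda$ is maximal in an order ideal $\Gamma$. Concretely, let $\Gamma$ be any finite order ideal of $\Lambda$ containing all the (finitely many, by finite length) highest weights of composition factors of $M$; since $\lambda$ is maximal among these, $\lambda$ is maximal in $\Gamma$, and $M$ lies in $\mathcal{C}_\Gamma$.

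First I would produce the nonzero map $f\colon \Delta_\lambda \to M$. Since $\lambda$ occurs as a highest weight of a composition factor of $M$, there is a subquotient of $M$ isomorphic to $L_\lambda$; more precisely, choosing the composition series carefully, I would find a subobject $M' \subseteq M$ with a quotient map $M' \twoheadrightarrow L_\lambda$. Because $\Delta_\lambda$ is projective in $\mathcal{C}_\Gamma$, the surjection $M' \to L_\lambda$ lifts along $\Delta_\lambda \to L_\lambda$ to a map $\Delta_\lambda \to M'$, and composing with the inclusion $M' \hookrightarrow M$ gives $f\colon \Delta_\lambda \to M$. This $f$ is nonzero because its composite with $M' \to L_\lambda$ equals the cover map $\Delta_\lambda \to L_\lambda$, which is nonzero.

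Next I would construct $g$. Given an arbitrary nonzero $f\colon \Delta_\lambda \to M$, its image $\im(f)$ is a nonzero quotient of $\Delta_\lambda$; since $\Delta_\lambda$ has simple head $L_\lambda$ (being a projective cover of $L_\lambda$ in $\mathcal{C}_{\leq\lambda}$, hence also in $\mathcal{C}_\Gamma$), every nonzero quotient of $\Delta_\lambda$ still has $L_\lambda$ in its head, so $\im(f)$ surjects onto $L_\lambda$. Dually, $\nabla_\lambda$ has simple socle $L_\lambda$. I would then use injectivity of $\nabla_\lambda$ in $\mathcal{C}_\Gamma$: the inclusion $\soc(\im f) \supseteq$ a copy of $L_\lambda \hookrightarrow \nabla_\lambda$ extends to a map $\im(f) \to \nabla_\lambda$, and this extension is nonzero since it is nonzero on that socle copy of $L_\lambda$. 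Precomposing with the surjection $M \twoheadrightarrow \im(f)$... wait — $f$ need not be surjective onto $\im(f)$ as a map *from* $M$; rather $\im(f)$ is a subobject of $M$. So instead I extend $\im(f)\hookrightarrow M$: using injectivity of $\nabla_\lambda$, the nonzero map $\im(f) \to \nabla_\lambda$ extends along the inclusion $\im(f) \hookrightarrow M$ to a map $g\colon M \to \nabla_\lambda$. Then $g\circ f\colon \Delta_\lambda \to \nabla_\lambda$ has image equal to the image of $\im(f)\to\nabla_\lambda$, which is nonzero; hence $g\circ f \neq 0$.

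The main subtlety — not really an obstacle, but the point requiring care — is keeping straight that the relevant projectivity/injectivity holds only in the truncated category $\mathcal{C}_\Gamma$, not in all of $\mathcal{C}$, so one must check at the outset that $M$, $\im(f)$, and the auxiliary subobjects all lie in $\mathcal{C}_\Gamma$; this is immediate from the choice of $\Gamma$ and the fact that Serre subcategories are closed under subobjects and quotients. One should also note that $f$ being nonzero forces $\lambda$ to be among the highest weights of composition factors of $M$ (otherwise $\Hom_\mathcal{C}(\Delta_\lambda, M)$ would vanish by the head-of-$\Delta_\lambda$ argument together with $\lambda$ maximal), so the hypothesis on $\lambda$ is used consistently in both halves.
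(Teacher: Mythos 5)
Your setup (truncating to an order ideal $\Gamma$ in which $\lambda$ is maximal, so that $\Delta_\lambda$ is projective and $\nabla_\lambda$ is injective in $\mathcal{C}_\Gamma$) is the same as the paper's, and your argument for the first half is fine. But the second half has a genuine gap: you assert that $\soc(\im f)$ contains a copy of $L_\lambda$, and this is false in general. What you actually established is that $\im(f)$, being a nonzero quotient of $\Delta_\lambda$, \emph{surjects} onto $L_\lambda$ --- i.e.\ $L_\lambda$ lies in the \emph{head} of $\im(f)$, not its socle. Concretely, take $M = \Delta_\lambda$ and $f = \mathrm{id}$, so $\im(f) = \Delta_\lambda$; for a typical Verma module (e.g.\ in category $\mathcal{O}$ for $\mathfrak{sl}_2$, where $\Delta_0$ has socle $L_{-2}$) the socle does not contain $L_\lambda$, so there is no inclusion $L_\lambda \hookrightarrow \im(f)$ to extend.

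The repair stays within your framework and is close to the paper's argument. Having shown $\im(f) \twoheadrightarrow L_\lambda$, compose this surjection with the socle embedding $L_\lambda \hookrightarrow \nabla_\lambda$ to obtain a nonzero $g' \colon \im(f) \to \nabla_\lambda$, then extend $g'$ along $\im(f) \hookrightarrow M$ using injectivity of $\nabla_\lambda$ in $\mathcal{C}_\Gamma$, exactly as you intended. Alternatively, use that $\nabla_\lambda$ is the injective hull of $L_\lambda$ in $\mathcal{C}_\Gamma$, so $\dim\Hom_{\mathcal{C}_\Gamma}\bigl(\im(f),\nabla_\lambda\bigr) = [\im(f):L_\lambda] = 1$, which is the route the paper takes (mirroring its proof of the first claim). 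Either way, $g\circ f$ is nonzero because $g$ restricts to the nonzero $g'$ on $\im(f)$ and $f$ is an epimorphism onto $\im(f)$.
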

\begin{proof}
	By truncating at a suitable order ideal, we may assume that $\lambda$ is maximal in $\Lambda$, so that $\Delta_\lambda$ is a projective cover and $\nabla_\lambda$ is an injective hull of $L_\lambda$.
	Then the first claim is immediate because
	\[ 0 \neq [M:L_\lambda] = \dim \Hom_\mathcal{C}(\Delta_\lambda,M) . \]
	For a non-zero homomorphism $f \colon \Delta_\lambda \to M$, the simple object $L_\lambda$ appears with composition multiplicity one in the image $M^\prime = \im(f)$ of $f$ (because $L_\lambda$ is the unique simple quotient of $\Delta_\lambda$), and the same argument as before shows that $\dim \Hom_\mathcal{C}(M^\prime,\nabla_\lambda) = 1$.
	By injectivity of $\nabla_\lambda$, any non-zero homomorphism $g^\prime \colon M^\prime \to \nabla_\lambda$ lifts to a homomorphism $g \colon M \to \nabla_\lambda$, and the composition $g \circ f$ is non-zero because $g^\prime$ is non-zero and $f$ induces an epimorphism $\Delta_\lambda \to M^\prime$.
\end{proof}

The following result was first observed in \cite[Section 1.5]{CPSabstractKL} (under slightly different assumptions).

\begin{Proposition} \label{prop:derivedtruncated}
	Let $\Gamma \subseteq \Lambda$ be an order ideal and consider the canonical functor
	\[ i_\Gamma \colon \mathcal{C}_\Gamma \longrightarrow \mathcal{C} . \]
	The derived functor
	\[ R \, i_\Gamma \colon D^b(\mathcal{C}_\Gamma) \longrightarrow D^b(\mathcal{C}) \]
	is fully faithful and
	its essential image is the full subcategory $D^b_{\mathcal{C}_\Gamma}(\mathcal{C})$ of $D^b(\mathcal{C})$ whose objects are the complexes in $\mathcal{C}$ with cohomology objects in $\mathcal{C}_\Gamma$.
\end{Proposition}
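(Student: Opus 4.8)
The plan is to verify the two claims — full faithfulness of $R\,i_\Gamma$ and the identification of its essential image — separately, exploiting that $\mathcal{C}_\Gamma$ is a Serre subcategory and that both categories are highest weight categories with the tilting $\mathrm{Ext}$-vanishing \eqref{eq:extvanishing} at our disposal. First I would set up the localization picture: since $\mathcal{C}_\Gamma$ is a Serre subcategory of $\mathcal{C}$, there is a quotient abelian category $\mathcal{C}/\mathcal{C}_\Gamma$, and the general theory of derived categories of Serre subcategories (as in \cite{KrauseDerivedCategory}) gives a semiorthogonal-type setup. But rather than invoke heavy machinery, the cleanest route is to prove full faithfulness directly on generators via $\mathrm{Ext}$-groups.

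\smallskip

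For full faithfulness, it suffices (since $D^b(\mathcal{C}_\Gamma)$ is generated as a triangulated category by the simple objects $L_\lambda$, $\lambda \in \Gamma$, and $R\,i_\Gamma$ is triangulated) to show that for all $\lambda,\mu \in \Gamma$ and all $i \in \Z$, the natural map
\[ \Ext^i_{\mathcal{C}_\Gamma}(L_\lambda, L_\mu) \longrightarrow \Ext^i_{\mathcal{C}}(L_\lambda, L_\mu) \]
is an isomorphism. For this I would compute both sides using standard and costandard objects. On the one hand, a resolution of $L_\lambda$ by standard objects of $\mathcal{C}$ (which can be chosen to live in $\mathcal{C}_\Gamma$, since the composition factors of $\Delta_\lambda$ all have highest weight $\leq \lambda \in \Gamma$, hence in $\Gamma$; here one uses that $\Gamma$ is an order ideal) together with a costandard coresolution of $L_\mu$ inside $\mathcal{C}_\Gamma$ reduces everything to computing $\Ext^\bullet(\Delta_\nu, \nabla_{\nu'})$ for $\nu,\nu' \in \Gamma$. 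By \eqref{eq:extvanishing}, these groups are the same whether computed in $\mathcal{C}_\Gamma$ or in $\mathcal{C}$ (they are $\kk$ if $\nu = \nu'$ and $i = 0$, and zero otherwise, in both categories). A spectral sequence or iterated long-exact-sequence argument then propagates this agreement up to $\Ext^i(L_\lambda, L_\mu)$. The one subtlety is ensuring that the comparison map is compatible with these resolutions — this follows because the resolutions themselves can be taken in $\mathcal{C}_\Gamma$ and $i_\Gamma$ is exact, so it carries a resolution to a resolution and the induced map on $\Ext$ is the natural one.

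\smallskip

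For the essential image, one inclusion is immediate: any complex in the image of $R\,i_\Gamma$ is represented by a bounded complex of objects of $\mathcal{C}_\Gamma$, so its cohomology objects (computed in $\mathcal{C}$, which agrees with cohomology in $\mathcal{C}_\Gamma$ since $i_\Gamma$ is exact and fully faithful on the abelian level) lie in $\mathcal{C}_\Gamma$; hence the image is contained in $D^b_{\mathcal{C}_\Gamma}(\mathcal{C})$. Conversely, given $X \in D^b_{\mathcal{C}_\Gamma}(\mathcal{C})$, I would argue by induction on the length of the cohomology (i.e.\ on $\sum_i \ell(H^i(X))$, or first on the amplitude and then on composition length of the outermost non-zero cohomology). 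Truncating $X$ and using the triangle
\[ \tau_{<n} X \longrightarrow X \longrightarrow \tau_{\geq n} X \longrightarrow \] together with the fact that a single object $M \in \mathcal{C}_\Gamma$, viewed as a complex concentrated in one degree, is manifestly $R\,i_\Gamma(M)$, reduces to showing $D^b_{\mathcal{C}_\Gamma}(\mathcal{C})$ is a triangulated subcategory containing the objects of $\mathcal{C}_\Gamma$ — which is clear — and that it coincides with the triangulated subcategory they generate. Since every object of $D^b_{\mathcal{C}_\Gamma}(\mathcal{C})$ is built by finitely many extensions from shifts of objects of $\mathcal{C}_\Gamma$ (via the truncation triangles), and each such object is in the essential image, and the essential image is triangulated (being the image of a triangulated functor) and closed under the relevant operations, we conclude.

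\smallskip

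The main obstacle, I expect, is the full faithfulness — specifically, carefully checking that the natural comparison map $\Ext^i_{\mathcal{C}_\Gamma}(-,-) \to \Ext^i_{\mathcal{C}}(-,-)$ really is an isomorphism in all degrees and not merely that the two groups have the same dimension. The dimension count is easy from \eqref{eq:extvanishing}, but to get an honest isomorphism one must track the map through the (co)resolutions; the key point making this work is that standard resolutions and costandard coresolutions of objects of $\mathcal{C}_\Gamma$ can be chosen entirely within $\mathcal{C}_\Gamma$ (using that $\Gamma$ is an order ideal, so $\mathcal{C}_\Gamma$ is closed under the relevant subquotients), and $i_\Gamma$ being exact sends such a resolution to a legitimate resolution in $\mathcal{C}$ computing the same derived functor. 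Everything else — the two inclusions for the essential image, the reduction to generators — is formal once this comparison is in hand.
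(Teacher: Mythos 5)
Your dévissage argument for the essential image matches the paper's proof exactly. For full faithfulness, the paper simply cites Proposition A.7.9 in the Riche reference, whereas you attempt a self-contained argument; the overall strategy (reduce to the $\Ext$-vanishing property \eqref{eq:extvanishing} via $\Delta$-filtered resolutions and $\nabla$-filtered coresolutions, then compare) is the right one and is essentially how the cited result is proved in the literature, so this is a legitimate, more explicit route.

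However, there is a concrete inaccuracy in how you set it up. You propose to take ``a resolution of $L_\lambda$ by standard objects of $\mathcal{C}$,'' but simple objects in a highest weight category do not in general admit finite resolutions by direct sums of standard objects. (Already for $\mathfrak{sl}_2$: $\nabla_0$ is not even a quotient of a direct sum of Verma modules, since $\Hom(\Delta_{-2},\nabla_0)=0$ by \eqref{eq:extvanishing}, so standard objects do not ``cover'' arbitrary objects; BGG-type resolutions of simples are a special feature, not a general one.) What you should use instead is a projective resolution of $L_\lambda$ in $\mathcal{C}_{\Gamma'}$ for a suitable finite order ideal $\Gamma'\subseteq\Gamma$ containing $\lambda$ and $\mu$; its terms admit $\Delta$-filtrations by Proposition \ref{prop:Donkincohomological}, and likewise an injective coresolution of $L_\mu$ in $\mathcal{C}_{\Gamma'}$, whose terms admit $\nabla$-filtrations. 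The point that must then be made explicit — and which you only gesture at with ``a spectral sequence or iterated long-exact-sequence argument'' — is that \eqref{eq:extvanishing} implies $\Delta$-filtered objects are $\Hom_\mathcal{C}(-,N)$-acyclic for every $\nabla$-filtered $N$, in both $\mathcal{C}$ and $\mathcal{C}_\Gamma$, so that $\Hom$ of the total complex computes derived $\Hom$ in either category without needing enough projectives or injectives in $\mathcal{C}$ itself; since $\mathcal{C}_\Gamma\hookrightarrow\mathcal{C}$ is full, the two Hom-complexes agree on the nose and the comparison map is the desired isomorphism. With that repair the argument is sound.
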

\begin{proof}
	The derived functor $R \, i_\Gamma$ is fully faithful by Proposition A.7.9 in \cite{RicheHabilitation}, and it is straightforward to show by dévissage (i.e.\ by induction on the number of non-zero cohomology objects) that every complex in $\mathcal{C}$ with cohomology objects in $\mathcal{C}_\Gamma$ belongs to the essential image of $R \, i_\Gamma$.
\end{proof}

\subsection{Filtrations and tilting objects}

Next we discuss the concept of \emph{$\Delta$-filtrations} and \emph{$\nabla$-filtrations} (sometimes called \emph{Weyl filtrations} and \emph{good filtrations}) in highest weight categories.

\begin{Definition}
	A \emph{$\Delta$-filtration} of an object $M$ of $\mathcal{C}$ is a filtration
	\[ 0 = M_0 \subseteq M_1 \subseteq \cdots \subseteq M_r = M \]
	such that $M_i / M_{i-1} \cong \Delta_{\lambda_i}$ for some $\lambda_i \in \Lambda$, for $i=1,\ldots,r$.
	A \emph{$\nabla$-filtration} of an object $N$ of $\mathcal{C}$ is a filtration
	\[ 0 = N_0 \subseteq N_1 \subseteq \cdots \subseteq N_s = N \]
	such that $N_i / N_{i-1} \cong \nabla_{\mu_i}$ for some $\mu_i \in \Lambda$, for $i=1,\ldots,s$.
	We write $\mathcal{C}_\Delta$ and $\mathcal{C}_\nabla$ for the full subcategories of objects of $\mathcal{C}$ that admit a $\Delta$-filtration or a $\nabla$-filtration, respectively.
\end{Definition}

The objects of $\mathcal{C}$ that admit a $\Delta$-filtration can be characterized by certain $\Ext$-vanishing properties.
This result is known as \emph{Donkin's cohomological criterion} and was first proven (for rational representations of reductive algebraic groups) by S.\ Donkin in \cite{DonkinFiltrationRationalModules}.
A proof in our setting can be found in \cite[Proposition A.7.10]{RicheHabilitation}.

\begin{Proposition} \label{prop:Donkincohomological}
	For an object $M$ of $\mathcal{C}$, the following are equivalent:
	\begin{enumerate}
		\item $M$ admits a $\Delta$-filtration;
		\item $\Ext_{\mathcal{C}}^1(M,\nabla_\lambda)=0$ for all $\lambda \in \Lambda$;
		\item $\Ext_{\mathcal{C}}^i(M,\nabla_\lambda)=0$ for all $\lambda \in \Lambda$ and $i>0$.
	\end{enumerate}
\end{Proposition}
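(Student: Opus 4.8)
The plan is to prove the cycle of implications $(1)\Rightarrow(3)\Rightarrow(2)\Rightarrow(1)$. Here $(3)\Rightarrow(2)$ is simply the case $i=1$, and for $(1)\Rightarrow(3)$ I would induct on the length $r$ of a $\Delta$-filtration of $M$: when $r\geq 1$ the filtration provides a short exact sequence $0\to\Delta_{\lambda_1}\to M\to M'\to 0$ with $M'$ carrying a $\Delta$-filtration of length $r-1$, and the long exact sequence obtained by applying $\Hom_\mathcal{C}(-,\nabla_\mu)$, together with $\Ext^i_\mathcal{C}(\Delta_{\lambda_1},\nabla_\mu)=0$ for $i\geq 1$ (a case of \eqref{eq:extvanishing}) and the inductive hypothesis for $M'$, forces $\Ext^i_\mathcal{C}(M,\nabla_\mu)=0$ for all $i\geq 1$ and all $\mu\in\Lambda$.

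The substance lies in $(2)\Rightarrow(1)$, which I would prove by induction on the composition length of $M$. Assuming $M\neq 0$, I would first replace $\mathcal{C}$ by its truncation $\mathcal{C}_\Gamma$ at the finite order ideal $\Gamma$ generated by the highest weights of the composition factors of $M$; this is harmless, since $\Hom$-spaces are unaffected, $\Ext^1_{\mathcal{C}_\Gamma}(M,\nabla_\mu)$ embeds into $\Ext^1_\mathcal{C}(M,\nabla_\mu)=0$, and a $\Delta$-filtration in $\mathcal{C}_\Gamma$ is one in $\mathcal{C}$. So I may assume $\Lambda$ is finite; fixing a maximal element $\lambda\in\Lambda$, the object $\Delta_\lambda$ is then projective and $\nabla_\lambda$ is injective in $\mathcal{C}$, and $d\coloneqq[M:L_\lambda]=\dim\Hom_\mathcal{C}(\Delta_\lambda,M)\geq 1$. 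I would then consider the image $N\subseteq M$ of the evaluation map $\varepsilon\colon\Delta_\lambda\otimes_\kk\Hom_\mathcal{C}(\Delta_\lambda,M)\to M$, together with $K=\ker\varepsilon$, so that $0\to K\to\Delta_\lambda^{\oplus d}\to N\to 0$. Using $\End_\mathcal{C}(\Delta_\lambda)=\kk$ one sees that $\varepsilon$ induces an isomorphism on $\Hom_\mathcal{C}(\Delta_\lambda,-)$, whence $\Hom_\mathcal{C}(\Delta_\lambda,K)=0$ and so $[K:L_\lambda]=0$; and lifting any morphism $\Delta_\lambda\to M/N$ through $M\twoheadrightarrow M/N$ (possible since $\Delta_\lambda$ is projective) shows its image lands in $N$, so that $\Hom_\mathcal{C}(\Delta_\lambda,M/N)=0$ and $[M/N:L_\lambda]=0$; in particular $\ell(M/N)<\ell(M)$.

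The key point is that $M/N$ again satisfies $(2)$. Applying $\Hom_\mathcal{C}(-,\nabla_\mu)$ to $0\to N\to M\to M/N\to 0$, the group $\Ext^1_\mathcal{C}(M/N,\nabla_\mu)$ is the cokernel of $\Hom_\mathcal{C}(M,\nabla_\mu)\to\Hom_\mathcal{C}(N,\nabla_\mu)$; for $\mu\neq\lambda$ this cokernel vanishes because $N$ is a \emph{quotient} of $\Delta_\lambda^{\oplus d}$, so $\Hom_\mathcal{C}(N,\nabla_\mu)$ embeds into $\Hom_\mathcal{C}(\Delta_\lambda^{\oplus d},\nabla_\mu)=0$ by \eqref{eq:extvanishing}, while for $\mu=\lambda$ the restriction $\Hom_\mathcal{C}(M,\nabla_\lambda)\to\Hom_\mathcal{C}(N,\nabla_\lambda)$ is injective (a morphism in its kernel factors through $M/N$, and $\Hom_\mathcal{C}(M/N,\nabla_\lambda)=[M/N:L_\lambda]=0$ as $\nabla_\lambda$ is injective) between spaces of equal dimension $d$, hence bijective. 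By the inductive hypothesis $M/N$ then has a $\Delta$-filtration, so $\Ext^2_\mathcal{C}(M/N,\nabla_\mu)=0$ by $(1)\Rightarrow(3)$; feeding this into the long exact sequence for $0\to N\to M\to M/N\to 0$ gives $\Ext^1_\mathcal{C}(N,\nabla_\mu)=0$, and then the long exact sequence for $0\to K\to\Delta_\lambda^{\oplus d}\to N\to 0$ exhibits $\Hom_\mathcal{C}(K,\nabla_\mu)$ as a quotient of $\Hom_\mathcal{C}(\Delta_\lambda^{\oplus d},\nabla_\mu)$, which vanishes for $\mu\neq\lambda$, while $\Hom_\mathcal{C}(K,\nabla_\lambda)=[K:L_\lambda]=0$. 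Thus $\Hom_\mathcal{C}(K,\nabla_\mu)=0$ for every $\mu\in\Lambda$; since Lemma \ref{lem:standardcostandardcomposition} (applied to $K$ with $\mu$ maximal among the highest weights of its composition factors) would otherwise furnish a nonzero map $K\to\nabla_\mu$, this forces $K=0$. Hence $N\cong\Delta_\lambda^{\oplus d}$, and splicing its evident $\Delta$-filtration with that of $M/N$ along $0\to N\to M\to M/N\to 0$ yields a $\Delta$-filtration of $M$.

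Every individual step here is elementary, so the main obstacle is organisational rather than conceptual: the reductions must be performed in the right order (truncate, then choose $\lambda$), the observation that the trace $N$ is a \emph{quotient} of $\Delta_\lambda^{\oplus d}$ is what eliminates the potentially troublesome weights $\mu<\lambda$ from the $\Ext^1$-computation, and the vanishing of $\Ext^2_\mathcal{C}(M/N,\nabla_\mu)$ has to be routed through the inductive hypothesis and the already-established implication $(1)\Rightarrow(3)$ before one can conclude $K=0$; care is therefore needed to keep the induction well-founded. The dual statement, characterising $\nabla$-filtered objects by the vanishing of $\Ext^1_\mathcal{C}(\Delta_\mu,-)$, follows by the same argument applied to the opposite highest weight category.
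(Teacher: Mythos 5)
The paper does not give its own proof of this proposition; it delegates entirely to \cite[Proposition A.7.10]{RicheHabilitation}, so there is no in-paper argument to compare against. Your proof is a correct, self-contained rendition of the classical trace argument (essentially the one going back to Donkin and Friedlander--Parshall and reproduced in Riche's appendix): reduce to a finite weight poset, pick a maximal weight $\lambda$ occurring in $M$, form the trace $N$ of $\Delta_\lambda$ in $M$, and run a double induction. All the steps check out. In particular: the truncation to $\mathcal{C}_\Gamma$ is indeed harmless (by Proposition \ref{prop:derivedtruncated} the $\Ext$-groups are in fact unchanged, not merely embedded); $\varepsilon_*$ being the identity on $\Hom_\mathcal{C}(\Delta_\lambda,M)$ together with the projectivity of $\Delta_\lambda$ in the truncated category correctly yields $[K:L_\lambda]=0$; the splitting of the $\Ext^1(M/N,\nabla_\mu)$ computation into $\mu\neq\lambda$ (where $\Hom_\mathcal{C}(N,\nabla_\mu)=0$ because $N$ is a quotient of $\Delta_\lambda^{\oplus d}$) and $\mu=\lambda$ (a dimension count using $[M/N:L_\lambda]=0$) is exactly the right dichotomy; routing $\Ext^2(M/N,\nabla_\mu)=0$ through the inductive hypothesis and the already-proved $(1)\Rightarrow(3)$ keeps the induction well-founded; and the appeal to Lemma \ref{lem:standardcostandardcomposition} to conclude $K=0$ from $\Hom_\mathcal{C}(K,\nabla_\mu)=0$ for all $\mu$ is exactly what that lemma is for. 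One could streamline slightly by observing that once $\Ext^1(N,\nabla_\mu)=0$ is known, the surjection $\Delta_\lambda^{\oplus d}\twoheadrightarrow N$ together with $[N:L_\lambda]=d$ already forces this map to be an isomorphism (compare $\Delta$-multiplicities, or observe that a nonzero $K$ would contribute to $\Hom(\Delta_\lambda^{\oplus d},\nabla_\mu)$), but your route via Lemma \ref{lem:standardcostandardcomposition} is clean and uses only tools the paper has already set up.
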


There is an analogous cohomological criterion for the existence of $\nabla$-filtrations which we leave to the reader.
Observe that Proposition \ref{prop:Donkincohomological} implies that all projective objects in $\mathcal{C}$ admit $\Delta$-filtrations, and analogously, all injective objects in $\mathcal{C}$ admit $\nabla$-filtrations.
Furthermore, one easily sees that for $\lambda \in \Lambda$ and an object $M$ of $\mathcal{C}$ that admits a Weyl filtration, the multiplicity of $\Delta_\lambda$ in any Weyl filtration
\[ 0 = M_0 \subseteq \cdots \subseteq M_r = M \]
of $M$, where $M_i / M_{i-1} \cong \Delta_{\lambda_i}$ for $i = 1 , \ldots , r$, is given by
\[ [ M : \Delta_\lambda ]_\Delta \coloneqq \dim \Hom_\mathcal{C}( M , \nabla_\lambda ) = \{ i \mid \lambda_i = \lambda \} . \]
As before, there is an analogous result for $\nabla$-filtrations which we leave to the reader.

\begin{Definition}
	An object $M$ of $\mathcal{C}$ is called a \emph{tilting object} if it admits both a $\Delta$-filtration and a $\nabla$-filtration.
	We write $\Tilt(\mathcal{C})$ for the full subcategory of tilting objects in $\mathcal{C}$.
\end{Definition}

Using Proposition \ref{prop:Donkincohomological} (and the analogous statement for $\nabla$-filtrations), it is straightforward to see that the direct sum $M \oplus N$ of two objects $M$ and $N$ of $\mathcal{C}$ is a tilting object if and only if $M$ and $N$ are both tilting objects.
In particular, every tilting object can be written as a finite direct sum of indecomposable tilting objects.
The following classification of indecomposable tilting objects is due to C.M.\ Ringel \cite{RingelAlmostSplit}; in our setting, it is proven in Theorem A.7.14 in \cite{RicheHabilitation}.

\begin{Theorem}[Ringel]
	For all $\lambda \in \Lambda$, there exists a unique (up to isomorphism) indecomposable tilting object $T_\lambda$ in $\mathcal{C}$ such that $T_\lambda$ belongs to $\mathcal{C}_{\leq \lambda}$ and $[T_\lambda:L_\lambda] = 1$.
	Furthermore, every indecomposable tilting object in $\mathcal{C}$ is isomorphic to $T_\mu$ for a unique $\mu \in \Lambda$. 
\end{Theorem}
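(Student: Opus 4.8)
The plan is to establish existence and uniqueness of $T_\lambda$ by induction on the order ideal $\Gamma = \{\mu \in \Lambda \mid \mu \leq \lambda\}$, which is finite by lower-finiteness of the poset. First I would reduce to the case where $\lambda$ is maximal in $\Lambda$: by Proposition~\ref{prop:Donkincohomological} (together with its $\nabla$-analogue) an object of $\mathcal{C}_\Gamma$ is tilting in $\mathcal{C}_\Gamma$ if and only if it is tilting in $\mathcal{C}$, since the relevant $\Ext$-groups in $\mathcal{C}_\Gamma$ agree with those in $\mathcal{C}$ (using that $\nabla_\mu$ for $\mu \in \Gamma$ is the same in both categories, and that $\Ext$-vanishing against all $\nabla_\mu$, $\mu \in \Gamma$, suffices by Lemma~\ref{lem:Extvanishingsimplecostandard}). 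So it is enough to construct the indecomposable tilting object of highest weight $\lambda$ after truncating to $\mathcal{C}_{\leq\lambda}$.

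For the existence step I would construct a tilting object with a $\Delta$-filtration whose top quotient is $\Delta_\lambda$ and whose other subquotients $\Delta_\mu$ satisfy $\mu < \lambda$. Concretely: start from $\Delta_\lambda$ and repeatedly resolve the obstruction to having a $\nabla$-filtration. If $\Ext^1_\mathcal{C}(\Delta_\lambda,\nabla_\mu) \neq 0$ for some $\mu$, then $\mu < \lambda$ by the $\Ext$-vanishing property~\eqref{eq:extvanishing}, and I would form a universal extension $0 \to \nabla_\mu^{\oplus n_\mu} \to E \to \Delta_\lambda \to 0$; one checks (using the long exact sequence and~\eqref{eq:extvanishing}, i.e.\ $\Ext^2(\Delta_\lambda,\nabla_\mu)=0$) that this kills the $\Ext^1$ against $\nabla_\mu$ without introducing new obstructions at higher weights, and $E$ still has a $\Delta$-filtration because $\nabla_\mu = \Delta_\mu$ modulo $\mathcal{C}_{<\mu}$ has one (here I'd use that $\mu$ being handled in decreasing order keeps things within the ideal). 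Iterating over the finitely many $\mu < \lambda$ produces an object $T$ with both a $\Delta$-filtration and a $\nabla$-filtration, hence tilting, with $[T:L_\lambda]=1$ and $T \in \mathcal{C}_{\leq\lambda}$. Taking the (unique) indecomposable summand of $T$ containing $L_\lambda$ in its head gives $T_\lambda$; uniqueness of that summand follows since $\Delta_\lambda$ appears with multiplicity one in any $\Delta$-filtration of $T$, so $\dim\Hom_\mathcal{C}(T,\nabla_\lambda)=1$ forces all of $L_\lambda$ to lie in a single indecomposable summand.

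For uniqueness of $T_\lambda$ and the statement that every indecomposable tilting object is some $T_\mu$, I would argue that any indecomposable tilting $T'$ has a well-defined maximal weight $\mu$ among the highest weights of its composition factors, that $[T':\Delta_\mu]_\Delta = \dim\Hom_\mathcal{C}(T',\nabla_\mu) \geq 1$, and — crucially — that $\Delta_\mu$ occurs exactly once: if it occurred twice, one could split off a direct summand using that $\Hom(\Delta_\mu, T') \to \Hom(\Delta_\mu,\nabla_\mu)$ and the $\nabla$-filtration structure force idempotents, contradicting indecomposability. Then $T'$ and $T_\mu$ both lie in $\mathcal{C}_{\leq\mu}$, both have $\Delta_\mu$ with multiplicity one at the top, and a standard argument with $\Hom$ and $\Ext^1$ vanishing between $\Delta$-filtered and $\nabla$-filtered objects (Proposition~\ref{prop:Donkincohomological}) shows the identity on the $L_\mu$-component lifts to mutually inverse maps $T' \leftrightarrow T_\mu$, so $T' \cong T_\mu$.

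The main obstacle I expect is the bookkeeping in the existence step: one must verify that the universal extensions, performed in a suitable (decreasing) order over the weights $\mu < \lambda$, simultaneously preserve the $\Delta$-filtration and do not create new $\Ext^1$-obstructions at weights already processed — this is exactly where the hypothesis $\Ext^2_\mathcal{C}(\Delta_\lambda,\nabla_\mu)=0$, upgraded via~\eqref{eq:extvanishing}, does the essential work, and where the lower-finiteness of $\Lambda$ guarantees the process terminates.
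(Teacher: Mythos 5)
The paper does not prove this theorem itself; it cites \cite[Theorem A.7.14]{RicheHabilitation} (with the original reference being Ringel \cite{RingelAlmostSplit}). So I can only compare your proposal with the standard argument. Your overall plan — truncate to the finite ideal $\mathcal{C}_{\leq\lambda}$, build $T_\lambda$ by iterated universal extensions starting from $\Delta_\lambda$, then classify indecomposable tilting objects by their maximal weight — is indeed the right shape of Ringel's argument. However, the core of your existence step is broken.

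You propose to detect the obstruction to a $\nabla$-filtration via the condition $\Ext^1_\mathcal{C}(\Delta_\lambda,\nabla_\mu) \neq 0$ and then form a universal extension $0 \to \nabla_\mu^{\oplus n_\mu} \to E \to \Delta_\lambda \to 0$. But by axiom (HW3), equivalently the $\Ext$-vanishing \eqref{eq:extvanishing}, we have $\Ext^1_\mathcal{C}(\Delta_\lambda,\nabla_\mu)=0$ for \emph{all} $\lambda,\mu$, so your test is never positive (and any such extension is split). Your iteration therefore terminates immediately with output $\Delta_\lambda$, which is not a tilting object unless $\lambda$ is minimal. The actual obstruction to $M$ admitting a $\nabla$-filtration is $\Ext^1_\mathcal{C}(\Delta_\nu,M)\neq 0$ for some $\nu$ (the $\nabla$-analogue of Proposition~\ref{prop:Donkincohomological}), and the universal extension must have the opposite shape, $0 \to M \to M' \to \Delta_\nu^{\oplus n} \to 0$ with $n=\dim\Ext^1_\mathcal{C}(\Delta_\nu,M)$; this kills $\Ext^1(\Delta_\nu,-)$ and keeps a $\Delta$-filtration because an extension of two $\Delta$-filtered objects is $\Delta$-filtered (again via Proposition~\ref{prop:Donkincohomological}). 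Processing the weights $\nu \leq \lambda$ in a suitable (decreasing) order and using that $\Ext^1_\mathcal{C}(\Delta_\nu,\Delta_\mu)=0$ whenever $\mu\leq\nu$ (projectivity of $\Delta_\nu$ in $\mathcal{C}_{\leq\nu}$) ensures no earlier obstruction is reintroduced. A further slip: you assert ``$\nabla_\mu = \Delta_\mu$ modulo $\mathcal{C}_{<\mu}$ has a $\Delta$-filtration'' — this is false in general; $\nabla_\mu$ admits a $\Delta$-filtration only when it is already tilting. Your uniqueness and classification sketch is in the right spirit but is too vague at the crucial step (that $\Delta_\mu$ appears exactly once in a $\Delta$-filtration of an indecomposable tilting object of maximal weight $\mu$); with the corrected construction one can close this by a standard lifting argument using $\Ext^1$-vanishing between $\Delta$-filtered and $\nabla$-filtered objects.
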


The following observation will be the cornerstone of our study of minimal tilting complexes below; see \cite[Proposition A.7.17]{RicheHabilitation} for a proof.

\begin{Proposition} \label{prop:tiltinghomotopyderived}
	The canonical functor
	\[ \mathfrak{T} \colon K^b\big( \Tilt(\mathcal{C}) \big) \longrightarrow D^b( \mathcal{C} ) \]
	from the bounded homotopy category of $\Tilt(\mathcal{C})$ to the bounded derived category of $\mathcal{C}$ is an equivalence of triangulated categories.
\end{Proposition}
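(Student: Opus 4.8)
The plan is to show that the triangulated functor $\mathfrak{T}$ is fully faithful and essentially surjective; since an exact, fully faithful, essentially surjective functor between triangulated categories is automatically an equivalence (the quasi-inverse is again exact), this suffices. For full faithfulness, I would first note that $\Tilt(\mathcal{C})$, viewed as complexes concentrated in homological degree zero, generates $K^b(\Tilt(\mathcal{C}))$ as a triangulated category (every bounded complex of tilting objects is built from shifts of its terms by iterated cones, via stupid truncation). A standard dévissage then reduces full faithfulness to the claim that
\[ \mathfrak{T} \colon \Hom_{K^b(\Tilt(\mathcal{C}))}(T, S[n]) \longrightarrow \Hom_{D^b(\mathcal{C})}(T, S[n]) \]
is an isomorphism for all $T, S \in \Tilt(\mathcal{C})$ and all $n \in \Z$: for fixed $T$, the objects $Y$ for which $\mathfrak{T}$ induces isomorphisms $\Hom(T, Y[n]) \to \Hom(\mathfrak{T}T, \mathfrak{T}Y[n])$ for all $n$ form a triangulated subcategory (apply the five lemma to the long exact $\Hom$-sequences attached to a distinguished triangle), and then for fixed $Y$ the admissible $X$ likewise form a triangulated subcategory; in both cases the subcategory contains the generators.

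For the reduced claim, the left-hand side equals $\Hom_\mathcal{C}(T,S)$ if $n = 0$ and vanishes otherwise, because $T$ and $S$ are complexes concentrated in a single degree. The right-hand side is $\Ext^n_\mathcal{C}(T,S)$, which equals $\Hom_\mathcal{C}(T,S)$ for $n = 0$, vanishes for $n < 0$, and vanishes for $n > 0$ by Donkin's cohomological criterion (Proposition \ref{prop:Donkincohomological}): since $T$ admits a $\Delta$-filtration and $S$ admits a $\nabla$-filtration, one reduces along a $\nabla$-filtration of $S$ to $\Ext^i_\mathcal{C}(T, \nabla_\mu) = 0$ for $i > 0$. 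In degree $n = 0$ the map in question is the canonical isomorphism $\Hom_\mathcal{C}(T,S) \xrightarrow{\sim} \Hom_{D^b(\mathcal{C})}(T, S)$. This proves full faithfulness.

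For essential surjectivity, since $\mathfrak{T}$ is fully faithful and triangulated, its essential image is a strictly full triangulated subcategory of $D^b(\mathcal{C})$ containing $\Tilt(\mathcal{C})$, hence it contains the triangulated subcategory $\langle \Tilt(\mathcal{C}) \rangle$ generated by the tilting objects, and it remains to prove $\langle \Tilt(\mathcal{C}) \rangle = D^b(\mathcal{C})$. By stupid truncations, $D^b(\mathcal{C})$ is generated as a triangulated category by the objects of $\mathcal{C}$ in degree zero, and by composition series it is generated by the simple objects, so it suffices to show $L_\lambda \in \langle \Tilt(\mathcal{C}) \rangle$ for all $\lambda \in \Lambda$. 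I would prove this by Noetherian induction on the lower finite poset $(\Lambda, \leq)$. Assuming $L_\mu \in \langle \Tilt(\mathcal{C}) \rangle$ for all $\mu < \lambda$ (so that $\mathcal{C}_{<\lambda} \subseteq \langle \Tilt(\mathcal{C}) \rangle$), the key point is that $T_\lambda$ admits a $\nabla$-filtration whose sections are one copy of $\nabla_\lambda$ together with various $\nabla_\mu$ with $\mu < \lambda$, and that this filtration can be rearranged so that $\nabla_\lambda$ is the top section, since $\Ext^1_\mathcal{C}(\nabla_\mu, \nabla_\lambda) = 0$ whenever $\mu < \lambda$ — this holds because any object of $\mathcal{C}_{<\lambda}$ has vanishing $\Ext^1$ into $\nabla_\lambda$, by Lemma \ref{lem:Extvanishingsimplecostandard}. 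This yields a short exact sequence $0 \to K \to T_\lambda \to \nabla_\lambda \to 0$ with $K \in \mathcal{C}_{<\lambda}$; since $T_\lambda$ and $K$ lie in $\langle \Tilt(\mathcal{C}) \rangle$, so does $\nabla_\lambda$, and then $0 \to L_\lambda \to \nabla_\lambda \to \nabla_\lambda / L_\lambda \to 0$ with $\nabla_\lambda/L_\lambda \in \mathcal{C}_{<\lambda}$ gives $L_\lambda \in \langle \Tilt(\mathcal{C}) \rangle$, closing the induction.

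The heart of the argument, and the step requiring the most care, is essential surjectivity — specifically the structural fact that each costandard object $\nabla_\lambda$ (equivalently, dually, each standard object $\Delta_\lambda$) admits a finite resolution by tilting objects. This rests on the $\Ext^1$-vanishing of Lemma \ref{lem:Extvanishingsimplecostandard} to rearrange $\nabla$-filtrations of $T_\lambda$, and on the lower-finiteness of $\Lambda$ to guarantee that the induction terminates, so that the resulting tilting resolutions are bounded and one stays inside $K^b\big(\Tilt(\mathcal{C})\big)$. The dévissage underlying full faithfulness is routine but should be spelled out, as it is used twice (fixing the source, then the target).
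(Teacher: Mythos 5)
Your proof is correct. The paper itself does not give an argument for this proposition; it simply cites \cite[Proposition A.7.17]{RicheHabilitation}, so there is no in-paper proof to compare against. Your argument is the standard self-contained one: full faithfulness is reduced by dévissage (via stupid truncations and the five lemma) to the vanishing $\Ext_\mathcal{C}^n(T,S) = 0$ for $n>0$ and $T,S$ tilting, which follows from Proposition~\ref{prop:Donkincohomological}; essential surjectivity is proved by well-founded induction on the lower finite poset $\Lambda$, using that $T_\lambda$ has a $\nabla$-filtration with a single section $\nabla_\lambda$ (as $[T_\lambda:L_\lambda]=1$ and all other sections have highest weight $<\lambda$), which can be arranged to sit on top by the $\Ext^1$-vanishing from Lemma~\ref{lem:Extvanishingsimplecostandard}. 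Two small remarks on presentation rather than substance. First, the step from the inductive hypothesis $L_\mu \in \langle \Tilt(\mathcal{C})\rangle$ for all $\mu<\lambda$ to $\mathcal{C}_{<\lambda}\subseteq\langle\Tilt(\mathcal{C})\rangle$ uses that $\mathcal{C}$ has finite length, which is part of the paper's standing assumptions but worth saying explicitly. Second, your closing gloss that lower-finiteness ensures the ``resulting tilting resolutions are bounded'' is a bit of a red herring in the framework you chose: you never construct an explicit resolution but only show membership in the triangulated subcategory $\langle\Tilt(\mathcal{C})\rangle\subseteq D^b(\mathcal{C})$, where boundedness is automatic. Lower-finiteness is used instead to guarantee that the partial order $<$ on $\Lambda$ is well-founded, so that the induction is legitimate.
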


\subsection{Ringel duality}

When studying tilting objects in a highest weight category with finite weight poset, it is often useful to use the \emph{Ringel dual}.
The material presented below is originally due to C.M.\ Ringel, see Section 6 in \cite{RingelAlmostSplit}.
We follow the more general exposition of Section 4 in \cite{BrundanStroppel}.

For this subsection, suppose that $\Lambda$ is finite, so that the simple object $L_\lambda$ has a projective cover $P_\lambda \to L_\lambda$ and an injective hull $L_\lambda \to I_\lambda$, for all $\lambda \in \Lambda$ (see Theorem 3.2.1 in \cite{BeilinsonGinzburgSoergel}).
Consider the \emph{characteristic tilting module} $T \coloneqq \bigoplus_{\lambda \in \Lambda} T_\lambda$ and the algebra $A \coloneqq \End_\mathcal{C}(T)^\mathrm{op}$.

\begin{Definition}
	The \emph{Ringel dual} of $\mathcal{C}$ is the category $\mathcal{C}^\prime$ of finite-dimensional left $A$-modules.
\end{Definition}

There are two canonical \emph{Ringel duality functors} from $\mathcal{C}$ to $\mathcal{C}^\prime$, given by
\[ R_\mathcal{C}^\Delta = \Hom_\mathcal{C}(T,-) \qquad \text{and} \qquad R_\mathcal{C}^\nabla = \Hom_\mathcal{C}(-,T)^* . \]
Observe that we have to take the dual in the definition of $R_\mathcal{C}^\nabla$ because $\Hom_\mathcal{C}(-,T)$ defines a functor from $\mathcal{C}$ to the category of right $A$-modules.
Some important properties of Ringel duality are summarized in the following result; see Theorem 4.10 in \cite{BrundanStroppel} and its proof.

\begin{Theorem} \label{thm:Ringeldual}
	The Ringel dual $\mathcal{C}^\prime$ of $\mathcal{C}$ is a highest weight category with weight poset $(\Lambda,\leq^\mathrm{op})$.
	The standard object and costandard object of highest weight $\lambda \in \Lambda$ in $\mathcal{C}^\prime$ are given by
	\[ \Delta^\prime_\lambda \coloneqq R_\mathcal{C}^\Delta( \nabla_\lambda ) \qquad \text{and} \qquad \nabla^\prime_\lambda \coloneqq R_\mathcal{C}^\nabla( \Delta_\lambda ) \]
	and the corresponding simple object is $L^\prime_\lambda \cong \head \, \Delta^\prime_\lambda \cong \soc \, \nabla^\prime_\lambda$.
	Furthermore, the Ringel duality functors restrict to equivalences
	\[ R_\mathcal{C}^\Delta \colon \mathcal{C}_\nabla \longrightarrow \mathcal{C}^\prime_\Delta \qquad \text{and} \qquad R_\mathcal{C}^\nabla \colon \mathcal{C}_\Delta \longrightarrow \mathcal{C}^\prime_\nabla \]
	which take acyclic complexes in $\mathcal{C}_\nabla$ (respectively $\mathcal{C}_\Delta$) to acyclic complexes in $\mathcal{C}^\prime_\Delta$ (respectively $\mathcal{C}^\prime_\nabla$), and we have
	\[ R_\mathcal{C}^\nabla( T_\lambda ) = I^\prime_\lambda , \qquad R_\mathcal{C}^\Delta( T_\lambda ) = P^\prime_\lambda , \qquad R_\mathcal{C}^\nabla( P_\lambda ) \cong R_\mathcal{C}^\Delta( I_\lambda ) \cong T^\prime_\lambda \]
	for all $\lambda \in \Lambda$, where we write $I^\prime_\lambda$ and $P^\prime_\lambda$ for the injective hull and the projective cover of $L^\prime_\lambda$ in $\mathcal{C}^\prime$, respectively, and $T^\prime_\lambda$ for the indecomposable tilting object of highest weight $\lambda$ in $\mathcal{C}^\prime$.
\end{Theorem}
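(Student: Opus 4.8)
The plan is to follow Ringel's original argument, in the streamlined form of Brundan and Stroppel \cite[\S 4]{BrundanStroppel}: one transports the highest weight structure of $\mathcal{C}$ across the functors $R_\mathcal{C}^\Delta$ and $R_\mathcal{C}^\nabla$. There are two basic engines. First, since $T = \bigoplus_{\lambda\in\Lambda} T_\lambda$ with the $T_\lambda$ pairwise non-isomorphic indecomposables (and $A = \End_\mathcal{C}(T)^\mathrm{op}$ finite-dimensional, all $\Hom$-spaces in $\mathcal{C}$ being so), the Yoneda-type functor $\Hom_\mathcal{C}(T,-)$ restricts to an equivalence $\Tilt(\mathcal{C}) = \mathrm{add}(T) \xrightarrow{\ \sim\ } A\text{-}\mathrm{proj}$ onto the finitely generated projective $A$-modules, sending $T_\lambda$ to the indecomposable projective $P_\lambda^\prime \coloneqq \Hom_\mathcal{C}(T,T_\lambda)$; dually $\Hom_\mathcal{C}(-,T)^*$ gives an equivalence $\mathrm{add}(T) \xrightarrow{\ \sim\ } A\text{-}\mathrm{inj}$ sending $T_\lambda$ to an indecomposable injective $I_\lambda^\prime$. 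Second, Donkin's cohomological criterion (Proposition \ref{prop:Donkincohomological} and its $\nabla$-analogue) gives $\Ext_\mathcal{C}^{>0}(M,N) = 0$ whenever $M \in \mathcal{C}_\Delta$ and $N \in \mathcal{C}_\nabla$; since $T \in \mathcal{C}_\Delta \cap \mathcal{C}_\nabla$, this shows that $R_\mathcal{C}^\Delta = \Hom_\mathcal{C}(T,-)$ is exact on short exact sequences with all terms in $\mathcal{C}_\nabla$, and $R_\mathcal{C}^\nabla = \Hom_\mathcal{C}(-,T)^*$ is exact on short exact sequences with all terms in $\mathcal{C}_\Delta$ (which also immediately yields the asserted preservation of acyclic complexes, applied termwise).

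Next I would carry out the following. Using that every object of $\mathcal{C}_\nabla$ admits a finite resolution by objects of $\mathrm{add}(T)$ with all syzygies again in $\mathcal{C}_\nabla$ — Ringel's relative projective resolutions, built by induction on the $\nabla$-filtration length by lifting the surjections $T_\lambda \twoheadrightarrow \nabla_\lambda$ through $\nabla$-filtered objects (the lifts exist by the $\Ext$-vanishing above) — together with the exactness of $R_\mathcal{C}^\Delta$ on such sequences and its full faithfulness on $\mathrm{add}(T)$, one checks that $R_\mathcal{C}^\Delta$ restricts to a fully faithful exact embedding $\mathcal{C}_\nabla \hookrightarrow \mathcal{C}^\prime$; dually for $R_\mathcal{C}^\nabla$ on $\mathcal{C}_\Delta$. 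Set $\Delta_\lambda^\prime \coloneqq R_\mathcal{C}^\Delta(\nabla_\lambda)$ and $\nabla_\lambda^\prime \coloneqq R_\mathcal{C}^\nabla(\Delta_\lambda)$; applying $R_\mathcal{C}^\Delta$ to $0 \to Q \to T_\lambda \to \nabla_\lambda \to 0$, where $Q$ has a $\nabla$-filtration with sections $\nabla_\mu$ ($\mu < \lambda$), exhibits $\Delta_\lambda^\prime$ as a quotient of $P_\lambda^\prime$, and dually $\nabla_\lambda^\prime$ embeds into $I_\lambda^\prime$. Then compute composition multiplicities: $\Hom_A(P_\mu^\prime, \Delta_\lambda^\prime) \cong \Hom_\mathcal{C}(T_\mu,\nabla_\lambda)$, and since $T_\mu \in \mathcal{C}_\Delta \cap \mathcal{C}_{\leq\mu}$ this gives $[\Delta_\lambda^\prime : L_\mu^\prime] = [T_\mu : \Delta_\lambda]_\Delta$, which vanishes unless $\lambda \leq \mu$ and equals $1$ for $\mu = \lambda$, where $L_\mu^\prime \coloneqq \head\,\Delta_\mu^\prime = \head\,P_\mu^\prime$ is simple (a nonzero quotient of the simple head of the indecomposable projective $P_\mu^\prime$); dually $\soc\,\nabla_\mu^\prime = \soc\,I_\mu^\prime = L_\mu^\prime$. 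This shows that the $L_\mu^\prime$ form a complete set of simple objects of $\mathcal{C}^\prime$, that $\Delta_\lambda^\prime$ and $\nabla_\lambda^\prime$ lie in $\mathcal{C}^\prime_{\leq^\mathrm{op}\lambda}$, and that the correct weight poset is $(\Lambda,\leq^\mathrm{op})$.

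It remains to verify the highest weight axioms for $\mathcal{C}^\prime$, for which the crucial point is the $\Ext$-orthogonality $\Ext_{\mathcal{C}^\prime}^i(\Delta_\lambda^\prime,\nabla_\mu^\prime) \cong \delta_{i,0}\,\delta_{\lambda,\mu}\,\kk$. I would deduce this from the derived equivalence attached to $T$: by Proposition \ref{prop:tiltinghomotopyderived} and finiteness of the global dimension of $A$, the functor $\mathbf{R}\Hom_\mathcal{C}(T,-) \colon D^b(\mathcal{C}) \to D^b(\mathcal{C}^\prime)$ is an equivalence, so $T$ is a tilting object in $D^b(\mathcal{C})$ and the composition pairing derives to an isomorphism
\[ \Hom_\mathcal{C}(\Delta_\mu,T) \otimes_A^{\mathbf{L}} \Hom_\mathcal{C}(T,\nabla_\lambda) \xrightarrow{\ \sim\ } \mathbf{R}\Hom_\mathcal{C}(\Delta_\mu,\nabla_\lambda) \]
(both outer $\Hom$-complexes being concentrated in degree $0$ by Donkin's criterion). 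Applying $\kk$-duality and the tensor–hom adjunction rewrites the left-hand side as $\mathbf{R}\Hom_A(\Delta_\lambda^\prime,\nabla_\mu^\prime)$, while the right-hand side computes $\Ext_\mathcal{C}^\bullet(\Delta_\mu,\nabla_\lambda)$, which is $\kk$ in degree $0$ for $\lambda = \mu$ and vanishes otherwise by \eqref{eq:extvanishing}; this gives (HW2) and (HW3) for $\mathcal{C}^\prime$. Then (HW1) follows by the standard argument that a standard object of a maximal weight of a truncation is projective there: one obtains $\Ext_{\mathcal{C}^\prime}^1(\Delta_\lambda^\prime, L_\mu^\prime) = 0$ for $\mu$ in the truncation from the vanishing against costandards by peeling off the socle filtration of $\nabla_\mu^\prime$ (Lemma \ref{lem:Extvanishingsimplecostandard} in $\mathcal{C}^\prime$), so that $\Delta_\lambda^\prime$ is the projective cover of $L_\lambda^\prime$ in $\mathcal{C}^\prime_{\leq^\mathrm{op}\lambda}$ and dually $\nabla_\lambda^\prime$ is the injective hull. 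Finally, $R_\mathcal{C}^\Delta(T_\lambda) = P_\lambda^\prime$ and $R_\mathcal{C}^\nabla(T_\lambda) = I_\lambda^\prime$ hold by construction, and for the last identity one notes that $R_\mathcal{C}^\nabla(P_\lambda)$ and $R_\mathcal{C}^\Delta(I_\lambda)$ both lie in $\mathcal{C}^\prime_\Delta \cap \mathcal{C}^\prime_\nabla = \Tilt(\mathcal{C}^\prime)$, are indecomposable, and are matched with $T_\lambda^\prime$ by comparing $\Delta^\prime$- and $\nabla^\prime$-filtration multiplicities via BGG-type reciprocity (equivalently, by invoking Ringel self-duality $\mathcal{C}^{\prime\prime} \simeq \mathcal{C}$).

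The main obstacle is precisely this verification of the full highest weight structure on $\mathcal{C}^\prime$ — establishing the $\Ext$-orthogonality of the $\Delta_\lambda^\prime$ and $\nabla_\mu^\prime$, and identifying $\mathcal{C}^\prime_\Delta$ (resp.\ $\mathcal{C}^\prime_\nabla$) with the essential image of $R_\mathcal{C}^\Delta$ (resp.\ $R_\mathcal{C}^\nabla$). The derived-category route sketched above is clean but hides genuine work: one must know that $A$ has finite global dimension (so that $D^b(\mathcal{C}^\prime) \simeq K^b(A\text{-}\mathrm{proj})$ and hence $\mathbf{R}\Hom_\mathcal{C}(T,-)$ is an equivalence) and that the composition pairing really does derive to an isomorphism, and then unwind the adjunctions with care about left/right module structures and degree conventions. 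The alternative, purely abelian-categorical route avoids the derived category but needs the delicate relative homological algebra of $\mathcal{C}_\Delta$ and $\mathcal{C}_\nabla$ — in particular the check that the relevant syzygies stay filtered — which is the technical heart of \cite[\S 4]{BrundanStroppel} and of \cite{RingelAlmostSplit}.
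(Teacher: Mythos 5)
The paper does not actually supply a proof of this theorem: it cites Brundan--Stroppel, Theorem~4.10. Your proposal reconstructs an argument that is broadly in the spirit of that source for the ``transport'' steps (the Morita equivalence $\mathrm{add}(T) \simeq A\text{-}\mathrm{proj}$, the relative projective resolutions of $\nabla$-filtered objects by $\mathrm{add}(T)$, the exactness of $R^\Delta_\mathcal{C}$ and $R^\nabla_\mathcal{C}$ on $\mathcal{C}_\nabla$ and $\mathcal{C}_\Delta$ respectively), but you replace the purely abelian verification of the $\Ext$-orthogonality $\Ext_{\mathcal{C}'}^i(\Delta_\lambda',\nabla_\mu')\cong\delta_{i,0}\delta_{\lambda,\mu}\kk$ by a derived-category computation: invoking the equivalence $\mathbf{R}\Hom_\mathcal{C}(T,-)\colon D^b(\mathcal{C})\to D^b(A)$ and the derived composition pairing. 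This is a legitimate alternative route, and you correctly identify it as the technical crux. The derived formula you write down is in fact a repackaging of the direct computation: applying $\Hom_A(-,\nabla_\mu')$ to the projective resolution $P_\bullet=\Hom_\mathcal{C}(T,T_\bullet)$ of $\Delta_\lambda'$ coming from a tilting coresolution $T_\bullet$ of $\nabla_\lambda$, and using the Morita isomorphism $\Hom_A(\Hom(T,T_i),\Hom(\Delta_\mu,T)^*)\cong\Hom_\mathcal{C}(\Delta_\mu,T_i)^*$, one reads off $\Ext_A^i(\Delta_\lambda',\nabla_\mu')\cong\Ext_\mathcal{C}^i(\Delta_\mu,\nabla_\lambda)^*$ directly and concretely.

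The one place where the argument as written risks circularity is your appeal to ``finiteness of the global dimension of $A$'' to get the derived equivalence $D^b(\mathcal{C})\simeq D^b(A)$. Finite global dimension of $A$ is normally obtained \emph{as a consequence} of $\mathcal{C}'$ being a highest weight category with finite weight poset, which is what you are trying to prove. You flag this yourself, but you do not resolve it. The fix is the one implicit in the previous paragraph: you never actually need the full equivalence $D^b(\mathcal{C})\simeq D^b(A)$ — the composite $D^b(\mathcal{C})\simeq K^b(\mathrm{add}(T))\simeq K^b(A\text{-}\mathrm{proj})\hookrightarrow D^b(A)$ is fully faithful unconditionally (Proposition~\ref{prop:tiltinghomotopyderived} plus the Morita equivalence plus the standard embedding of perfect complexes), and fully faithfulness is all that enters the $\Ext$-orthogonality computation; alternatively, verify the quasi-isomorphism by hand as above. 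The final step (identifying $R^\nabla_\mathcal{C}(P_\lambda)\cong R^\Delta_\mathcal{C}(I_\lambda)\cong T_\lambda'$) is also a little telegraphic: you assert that these objects lie in $\mathcal{C}'_\Delta\cap\mathcal{C}'_\nabla$, but the containment $R^\nabla_\mathcal{C}(P_\lambda)\in\mathcal{C}'_\Delta$ is not automatic — it follows because $R^\nabla_\mathcal{C}$ induces isomorphisms on $\Ext^1$ between $\Delta$-filtered objects (as $\mathcal{C}_\Delta$ and $\mathcal{C}'_\nabla$ are each extension-closed), so $\Ext^1_{\mathcal{C}'}(R^\nabla_\mathcal{C}(P_\lambda),\nabla_\mu')\cong\Ext^1_\mathcal{C}(P_\lambda,\Delta_\mu)=0$; and ``by Ringel self-duality'' cannot be used here since that is downstream of the present theorem. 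These are fillable gaps, not wrong turns, but in a self-contained write-up they would need to be spelled out.
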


\subsection{\texorpdfstring{$\nabla$}{Costandard}-filtration dimension and \texorpdfstring{$\Delta$}{standard}-filtration dimension}

We conclude this section with a brief discussion of the notion of $\nabla$-filtration dimension and $\Delta$-filtration dimension, which was introduced by E.\ Friedlander and B.J.\ Parshall in \cite{FriedlanderParshallGFD} in order to study the cohomology of Lie algebras and algebraic groups in positive characteristic.

\begin{Definition}
	Let $M$ be a non-zero object of $\mathcal{C}$.
	The \emph{$\nabla$-filtration dimension} of $M$ is
	\[ \dim_\nabla(M) = \max \{ d \mid \Ext_\mathcal{C}^d( \Delta_\lambda , M ) \neq 0 \text{ for some } \lambda \in \Lambda \} \]
	and the \emph{$\Delta$-filtration dimension} of $M$ is
	\[ \dim_\Delta(M) = \max \{ d \mid \Ext_\mathcal{C}^d(M,\nabla_\lambda) \neq 0 \text{ for some } \lambda \in \Lambda \} . \]
\end{Definition}

The $\nabla$-filtration dimension and the $\Delta$-filtration dimension of a non-zero object $M$ of $\mathcal{C}$ are well-defined because the sets
\[ \{ d \mid \Ext_\mathcal{C}^d( \Delta_\lambda , M ) \neq 0 \text{ for some } \lambda \in \Lambda \} \qquad \text{and} \qquad \{ d \mid \Ext_\mathcal{C}^d( M , \nabla_\lambda ) \neq 0 \text{ for some } \lambda \in \Lambda \} \]
are bounded and non-empty by Lemmas \ref{lem:Extvanishingsimplecostandard} and \ref{lem:standardcostandardcomposition}.
We alert the reader to the fact that $\nabla$-filtrations are often called \emph{good filtrations} in the literature, and $\Delta$-filtrations are often called \emph{Weyl filtrations}.
Accordingly, the $\nabla$-filtration dimension and the $\Delta$-filtration dimension are sometimes called \emph{good filtration dimension} and \emph{Weyl filtration dimension} and denoted by $\mathrm{gfd}(M)$ and $\mathrm{wfd}(M)$, respectively.

\begin{Remark} \label{rem:gfdzero}
	Observe that by Proposition \ref{prop:Donkincohomological}, a non-zero object $M$ of $\mathcal{C}$ admits a $\Delta$-filtration if and only if $\dim_\Delta( M ) = 0$, and $M$ admits a $\nabla$-filtration if and only if $\dim_\nabla( M ) = 0$.
\end{Remark}

\begin{Lemma} \label{lem:GFDSES}
	For any short exact sequence $0 \to A \to B \to C \to 0$ of non-zero objects in $\mathcal{C}$, we have
	\begin{enumerate}
		\item $\dim_\nabla(A) \leq \max\{ \dim_\nabla(B) , \dim_\nabla(C)+1 \}$ with equality if $\dim_\nabla(B) \neq \dim_\nabla(C)$;
		\item $\dim_\nabla(B) \leq \max\{ \dim_\nabla(A) , \dim_\nabla(C) \}$ with equality if $\dim_\nabla(A) \neq \dim_\nabla(C)+1$;
		\item $\dim_\nabla(C) \leq \max\{ \dim_\nabla(A)-1 , \dim_\nabla(B) \}$ with equality if $\dim_\nabla(A) \neq \dim_\nabla(B)$;
		\item $\dim_\Delta(A) \leq \max\{ \dim_\Delta(B) , \dim_\Delta(C)-1 \}$ with equality if $\dim_\Delta(B) \neq \dim_\Delta(C)$;
		\item $\dim_\Delta(B) \leq \max\{ \dim_\Delta(A) , \dim_\Delta(C) \}$ with equality if $\dim_\Delta(C) \neq \dim_\Delta(A)+1$;
		\item $\dim_\Delta(C) \leq \max\{ \dim_\Delta(B) , \dim_\Delta(A)+1 \}$ with equality if $\dim_\Delta(A) \neq \dim_\Delta(B)$.
	\end{enumerate}
\end{Lemma}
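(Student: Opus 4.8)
The plan is to deduce all six statements by chasing the long exact sequences of $\Ext$-groups attached to the short exact sequence $0 \to A \to B \to C \to 0$. Recall that, as noted just before the statement, for any non-zero object $M$ of $\mathcal{C}$ the quantity $\dim_\nabla(M)$ is a well-defined non-negative integer: there is some $\nu \in \Lambda$ with $\Ext_\mathcal{C}^{\dim_\nabla(M)}(\Delta_\nu,M) \neq 0$, while $\Ext_\mathcal{C}^d(\Delta_\nu,M) = 0$ for all $\nu \in \Lambda$ and all $d > \dim_\nabla(M)$; dually for $\dim_\Delta(M)$ with $\Hom_\mathcal{C}(-,\nabla_\nu)$ in place of $\Hom_\mathcal{C}(\Delta_\nu,-)$.

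For parts (1)--(3), one fixes $\nu \in \Lambda$ and applies $\Hom_\mathcal{C}(\Delta_\nu,-)$ to obtain the long exact sequence
\[ \cdots \longrightarrow \Ext_\mathcal{C}^{d}(\Delta_\nu,A) \longrightarrow \Ext_\mathcal{C}^{d}(\Delta_\nu,B) \longrightarrow \Ext_\mathcal{C}^{d}(\Delta_\nu,C) \overset{\partial}{\longrightarrow} \Ext_\mathcal{C}^{d+1}(\Delta_\nu,A) \longrightarrow \cdots . \]
Exactness at $\Ext_\mathcal{C}^{d}(\Delta_\nu,A)$ shows that if this group is non-zero then $\Ext_\mathcal{C}^{d-1}(\Delta_\nu,C) \neq 0$ or $\Ext_\mathcal{C}^{d}(\Delta_\nu,B) \neq 0$, which yields the inequality in (1); likewise exactness at $\Ext_\mathcal{C}^{d}(\Delta_\nu,B)$ gives (2) and exactness at $\Ext_\mathcal{C}^{d}(\Delta_\nu,C)$ gives (3).

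For the equality assertions I would, in each case, write $d$ for the right-hand side of the inequality just proved and inspect the long exact sequence in homological degrees $d$ and $d+1$. The strict inequality hypothesis forces the $\Ext$-groups of the object with the strictly smaller $\nabla$-filtration dimension to vanish in the relevant degrees, so that one of the maps into or out of the $\Ext$-group of the object in question becomes an isomorphism -- or, in the boundary subcase where the two numbers entering the maximum differ by exactly one, merely a surjection. Choosing $\nu$ so that the source of that map is non-zero then exhibits a non-zero $\Ext$-group realizing the value $d$, and upgrades the inequality to an equality. Concretely, for (1) one splits into the case $\dim_\nabla(B) > \dim_\nabla(C)$, where $\Ext_\mathcal{C}^d(\Delta_\nu,C) = 0$ makes $\Ext_\mathcal{C}^d(\Delta_\nu,A) \to \Ext_\mathcal{C}^d(\Delta_\nu,B)$ surjective, and the case $\dim_\nabla(B) < \dim_\nabla(C)$, where $\partial \colon \Ext_\mathcal{C}^{\dim_\nabla(C)}(\Delta_\nu,C) \to \Ext_\mathcal{C}^{\dim_\nabla(C)+1}(\Delta_\nu,A)$ is an isomorphism; parts (2) and (3) are entirely analogous.

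Finally, parts (4)--(6) follow from the same argument applied to the long exact sequence
\[ \cdots \longrightarrow \Ext_\mathcal{C}^{d}(C,\nabla_\nu) \longrightarrow \Ext_\mathcal{C}^{d}(B,\nabla_\nu) \longrightarrow \Ext_\mathcal{C}^{d}(A,\nabla_\nu) \longrightarrow \Ext_\mathcal{C}^{d+1}(C,\nabla_\nu) \longrightarrow \cdots \]
obtained from the contravariant functor $\Hom_\mathcal{C}(-,\nabla_\nu)$; this is formally the sequence used for (1)--(3) with the roles of $A$ and $C$ interchanged, so that (4), (5) and (6) correspond respectively to (3), (2) and (1). (Alternatively, one could truncate at a finite order ideal containing the highest weights of all composition factors of $B$ and pass to the opposite highest weight category, in which $\Delta$ and $\nabla$ are interchanged, but the direct argument is cleaner and avoids the extra bookkeeping.) I do not expect a genuine obstacle here -- the entire content is the long exact sequence -- and the only point needing care is to keep the case analysis for the six equality statements organized, in particular to treat the boundary subcases where only surjectivity of an induced map, rather than bijectivity of a connecting map, is available.
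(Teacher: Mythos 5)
Your proof is correct and follows essentially the same route as the paper: apply $\Hom_\mathcal{C}(\Delta_\nu,-)$ (resp.\ $\Hom_\mathcal{C}(-,\nabla_\nu)$) to the short exact sequence and chase the resulting long exact sequence of $\Ext$-groups, using vanishing outside the filtration dimension to force the relevant connecting or induced map to be injective, surjective, or an isomorphism. The paper likewise proves only part (1) in detail, with the same two-case split on $\dim_\nabla(B) \lessgtr \dim_\nabla(C)$, and declares (2)--(6) analogous; your observation that the boundary subcases sometimes give only a surjection rather than an isomorphism matches the paper's handling exactly.
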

\begin{proof}
	We only prove (1), the proofs of (2)--(6) are completely analogous.
	For $i\geq 0$ and $\lambda \in \Lambda$, the short exact sequence
	$ 0\to A\to B \to C \to 0 $
	gives rise to an exact sequence
	\[ \Ext_\mathcal{C}^i\big( \Delta_\lambda , B \big) \to \Ext_\mathcal{C}^i\big( \Delta_\lambda , C \big) \to \Ext_\mathcal{C}^{i+1}\big( \Delta_\lambda , A \big) \to \Ext_\mathcal{C}^{i+1}\big( \Delta_\lambda , B \big) \to \Ext_\mathcal{C}^{i+1}\big( \Delta_\lambda , C \big) . \]
	If $i+1 > \max\{ \dim_\nabla(B) , \dim_\nabla(C)+1 \}$ then
	$ \Ext_\mathcal{C}^i( \Delta_\lambda , C ) = 0 $ and $ \Ext_\mathcal{C}^{i+1}( \Delta_\lambda , B ) = 0 $
	for all $\lambda \in \Lambda$, and we conclude that $\Ext_\mathcal{C}^{i+1}( \Delta_\lambda , A ) = 0$ for all $\mu\in \Lambda$, hence
	\[ \dim_\nabla(A) \leq \max\{ \dim_\nabla(B) , \dim_\nabla(C)+1 \} . \]
	If $\dim_\nabla(B)<\dim_\nabla(C) \eqqcolon d$ then $\Ext_\mathcal{C}^d( \Delta_\lambda , C ) \neq 0$ for some $\lambda \in X^+$, and as $\Ext_\mathcal{C}^d( \Delta_\lambda , B ) = 0$, the $\Ext$-group $\Ext_\mathcal{C}^d( \Delta_\lambda , C )$ embeds into $\Ext_\mathcal{C}^{d+1}( \Delta_\lambda , A )$.
	This implies that $\Ext_\mathcal{C}^{d+1}( \Delta_\lambda , A ) \neq 0$ and therefore $\dim_\nabla(A)=d+1$.
	Analogously, if $\dim_\nabla(C)<\dim_\nabla(B)\eqqcolon d^\prime$ then $\Ext_\mathcal{C}^{d^\prime}( \Delta_\lambda , B ) \neq 0$ for some $\lambda \in \Lambda$, and $\Ext_\mathcal{C}^{d^\prime}( \Delta_\lambda , A )$ surjects onto $\Ext_\mathcal{C}^{d^\prime}( \Delta_\lambda , B )$ because $\Ext_\mathcal{C}^{d^\prime}( \Delta_\lambda , C ) = 0$.
	As before, we conclude that $\Ext_\mathcal{C}^{d^\prime}( \Delta_\lambda , A ) \neq 0$ and $\dim_\nabla(A)=d^\prime$.
\end{proof}

\section{Minimal tilting complexes} \label{sec:minimaltiltingcomplexes}

In this section, we explain the concept of \emph{minimal tilting complexes} and prove some of their elementary properties.
The main idea is to transport information from $\mathrm{Tilt}( \mathcal{C} )$ to $\mathcal{C}$ using the equivalence
\[ \mathfrak{T} \colon K^b\big( \mathrm{Tilt}(\mathcal{C}) \big) \longrightarrow D^b( \mathcal{C} ) \]
from Proposition \ref{prop:tiltinghomotopyderived}.
In order to do this it will be helpful to choose a unique representative from every homotopy class in $K^b\big( \mathrm{Tilt}( \mathcal{C} ) \big)$ that is minimal in a suitable sense.

\subsection{Minimal complexes in additive categories} \label{subsec:minimalcomplexes}

The theory of minimal complexes is widely used in homological algebra, but we were not able to find a reference where it is discussed in the greatest possible generality.
The approach that we present here shows that minimal complexes are well behaved (if they exist) in any additive category, and we use ideas of D.\ Bar-Natan from \cite{BarNatan} to show that minimal complexes always exist in a Krull-Schmidt category.
Let $\mathcal{A}$ be an additive category.

\begin{Definition}
	The \emph{radical} of $\mathcal{A}$ is the ideal%
	\footnote{An \emph{ideal} $\mathcal{I}$ in an additive category $\mathcal{A}$ is a collection of subgroups $\mathcal{I}(A,B) \subseteq \Hom_\mathcal{A}(A,B)$, for all $A, B \in \mathrm{Ob}(\mathcal{A})$, such that for all $f \in \mathcal{I}(A,B)$ and for all homomorphisms $a \colon A^\prime \to A$ and $b \colon B \to B^\prime$ in $\mathcal{A}$, we have $b \circ f \circ a \in \mathcal{I}(A^\prime,B^\prime)$.}
	$\rad_\mathcal{A}$ with
	\[ \mathrm{rad}_\mathcal{A}(A,B) = \big\{ f\in\Hom_\mathcal{A}(A,B) \mathop{\big|} b \circ f \circ a \in J\big(\End_\mathcal{A}(C)\big) \text{ for all } a \colon C \to A \text{ and } b \colon B \to C \big\} \]
	for all objects $A$ and $B$ of $\mathcal{A}$, where $J\big(\End_\mathcal{A}(C)\big)$ denotes the Jacobson radical of the endomorphism ring $\End_\mathcal{A}(C)$ of an object $C$ of $\mathcal{A}$.
\end{Definition}

For a general discussion of the radical, see Section 2 in \cite{KrauseKrullSchmidt}.

\begin{Definition}
	A complex
	\[ \cdots \xrightarrow{\,d_{-2}\,} A_{-1} \xrightarrow{\,d_{-1}\,} A_0 \xrightarrow{~d_0~} A_1 \xrightarrow{~d_1~} \cdots \]
	in $\mathcal{A}$ is called \emph{minimal} if $d_i\in \mathrm{rad}_\mathcal{A}(A_i,A_{i+1})$ for all $i\in\Z$.
\end{Definition}

We first prove two results that show that every homotopy class in $K(\mathcal{A})$ contains at most one minimal complex.

\begin{Lemma} \label{lem:minimalcomplexsplit}
	Let $C$ and $C^\prime$ be complexes in $\mathcal{A}$ and let $f\colon C \to C^\prime$ be a morphism of complexes.
	\begin{enumerate}
		\item If $C$ is a minimal complex and $f$ is a split monomorphism in $K(\mathcal{A})$ then $f$ is also a split monomorphism in $C(\mathcal{A})$.
		\item If $C^\prime$ is a minimal complex and $f$ is a split epimorphism in $K(\mathcal{A})$ then $f$ is also a split epimorphism in $C(\mathcal{A})$.
	\end{enumerate}
\end{Lemma}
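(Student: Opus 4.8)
The plan is to prove both statements simultaneously using a standard Gaussian-elimination argument adapted to the setting of minimal complexes, following the ideas of Bar-Natan. Let me focus on part (1); part (2) is dual. So suppose $C = (C_\bullet, d_\bullet)$ is minimal and $f \colon C \to C'$ is a split monomorphism in $K(\mathcal{A})$, meaning there is a morphism of complexes $g \colon C' \to C$ and a homotopy $h$ with $g \circ f - \id_C = h d_C + d_C h$. The goal is to modify $g$ to a genuine chain-level left inverse. First I would observe that $e \coloneqq g \circ f \colon C \to C$ is a chain map homotopic to the identity, so it suffices to show that any chain endomorphism of a minimal complex that is homotopic to $\id_C$ is in fact an automorphism in $C(\mathcal{A})$; then $e^{-1} \circ g$ is the desired chain-level retraction of $f$.

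To prove that claim, the key point is that $\id_{C_i} - e_i = h_{i} d_{i-1}^C + d_i^C h_{i+1}$ (indices up to sign conventions), and since $C$ is minimal, each $d_i^C$ lies in $\rad_\mathcal{A}$; because $\rad_\mathcal{A}$ is an ideal, the right-hand side lies in $\rad_\mathcal{A}(C_i, C_i) = J(\End_\mathcal{A}(C_i))$. Hence $e_i = \id_{C_i} - (\text{element of the Jacobson radical})$, which is a unit in $\End_\mathcal{A}(C_i)$ for every $i$. So $e$ is a chain map that is a term-wise isomorphism, and therefore $e$ is an isomorphism in $C(\mathcal{A})$ with chain-level inverse given term-wise by $e_i^{-1}$ (one checks $e^{-1}$ is a chain map by a routine diagram manipulation from $e d_C = d_C e$). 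Then $f$ has left inverse $e^{-1} \circ g$ in $C(\mathcal{A})$, as required. For part (2) one runs the dual argument: if $C'$ is minimal and $f$ is a split epimorphism in $K(\mathcal{A})$ with section $s \colon C' \to C$, then $f \circ s \colon C' \to C'$ is homotopic to $\id_{C'}$, hence term-wise a unit by the same radical argument, so $s \circ (f \circ s)^{-1}$ is a chain-level section of $f$.

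The only subtle point — and the step I expect to require the most care — is verifying that the ideal property of $\rad_\mathcal{A}$ really does force $h_i d_{i-1}^C + d_i^C h_{i+1}$ into $\rad_\mathcal{A}(C_i, C_i)$, and then that $\rad_\mathcal{A}(C_i,C_i)$ coincides with (or at least is contained in) $J(\End_\mathcal{A}(C_i))$ so that $\id - e_i$ is invertible. The containment $\rad_\mathcal{A}(A,A) \subseteq J(\End_\mathcal{A}(A))$ is immediate from the definition (take $a = b = \id_A$), and since $d_i^C \in \rad_\mathcal{A}(C_i, C_{i+1})$ the composites $h_i d^C_{i-1}$ and $d_i^C h_{i+1}$ land in $\rad_\mathcal{A}(C_i,C_i)$ by the ideal axiom, and a sum of radical morphisms is radical since each $\rad_\mathcal{A}(A,B)$ is a subgroup. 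So the argument goes through cleanly; no Krull–Schmidt hypothesis is needed for this lemma, only that the radical is a well-defined ideal, which holds in any additive category.
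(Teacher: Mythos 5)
Your proposal is correct and follows essentially the same route as the paper's proof: in both cases one writes $\id_{C_i} - g_if_i$ as a sum of composites each involving a differential of the minimal complex $C$, concludes from the ideal property of $\rad_\mathcal{A}$ that this difference lies in $J(\End_\mathcal{A}(C_i))$, hence $e_i = g_if_i$ is a unit, and then takes $e^{-1}\circ g$ as the chain-level retraction (the paper writes this as $g'_i = \varphi_i^{-1}\circ g_i$ with $\varphi_i = g_if_i$). Your observation that no Krull--Schmidt hypothesis is needed here is also correct and matches the paper, which only invokes Krull--Schmidt from Lemma 2.5 onward.
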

\begin{proof}
	Let us write $C=(A_\bullet,d_\bullet)$ and suppose that $d_i\in\mathrm{rad}_\mathcal{A}(A_i,A_{i+1})$ for all $i \in \Z$.
	If $f$ is a split monomorphism in the homotopy category $K(\mathcal{A})$ then there exist a morphism of complexes $g \colon C^\prime \to C$ and a homotopy equivalence $h=(h_i)_{i\in\Z}$ from $g \circ f$ to $\id_C$, so
	\[ \id_{A_i} - g_i \circ f_i = h_{i+1} \circ d_i + d_{i-1} \circ h_i \]
	for all $i\in\Z$. Now $h_{i+1} \circ d_i + d_{i-1} \circ h_i \in \rad_\mathcal{A}(A_i,A_i) \subseteq J\big( \End_\mathcal{A}(A_i) \big)$, and it follows that $\varphi_i \coloneqq g_i \circ f_i$ is invertible.
	Then $g^\prime \coloneqq (\varphi_i^{-1} \circ g_i)_{i\in\Z}$ is a morphism of complexes and $g^\prime \circ f = \id_C$, so $f$ is a split monomorphism in $C(\mathcal{A})$.
	The second claim can be proven analogously.
\end{proof}

\begin{Corollary} \label{cor:minimalcomplexunique}
	Let $C$ and $C^\prime$ be minimal complexes over $\mathcal{A}$ and let $f\colon C \to C^\prime$ be a morphism of complexes.
	If $f$ is an isomorphism in $K(\mathcal{A})$ then $f$ is an isomorphism in $C(\mathcal{A})$.  
\end{Corollary}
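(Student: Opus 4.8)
The goal is to upgrade an isomorphism in $K(\mathcal{A})$ between minimal complexes to an isomorphism in $C(\mathcal{A})$, and the natural plan is to deduce this directly from Lemma \ref{lem:minimalcomplexsplit}. The key observation is that an isomorphism is in particular both a split monomorphism and a split epimorphism, so I would like to apply both parts of the lemma and then check that the resulting chain-level inverses agree.

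Concretely, I would proceed as follows. Since $f \colon C \to C'$ is an isomorphism in $K(\mathcal{A})$, it is a split monomorphism in $K(\mathcal{A})$, and since $C$ is minimal, part (1) of Lemma \ref{lem:minimalcomplexsplit} gives a morphism of complexes $g \colon C' \to C$ with $g \circ f = \id_C$ in $C(\mathcal{A})$. Symmetrically, $f$ is a split epimorphism in $K(\mathcal{A})$, and since $C'$ is minimal, part (2) of the lemma gives a morphism of complexes $g' \colon C' \to C$ with $f \circ g' = \id_{C'}$ in $C(\mathcal{A})$. Now the standard two-sided-inverse argument applies entirely at the level of $C(\mathcal{A})$: from $g \circ f = \id_C$ and $f \circ g' = \id_{C'}$ we get $g = g \circ \id_{C'} = g \circ f \circ g' = \id_C \circ g' = g'$, so $g = g'$ is a genuine two-sided inverse of $f$ in $C(\mathcal{A})$, i.e.\ $f$ is an isomorphism of complexes. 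This is essentially the whole argument.

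I do not expect a serious obstacle here; the only point that needs care is making sure that the chain-level inverses produced by the two applications of Lemma \ref{lem:minimalcomplexsplit} are used correctly, which is why the final step composes $g$ and $g'$ rather than attempting to show either one alone is two-sided. One could alternatively invoke just part (1): $f$ being an isomorphism in $K(\mathcal{A})$ makes it a split monomorphism there, so by minimality of $C$ it is a split monomorphism in $C(\mathcal{A})$ with retraction $g$; then $f \circ g$ is an idempotent in $C(\mathcal{A})$ which becomes the identity in $K(\mathcal{A})$, and a short argument using minimality of $C'$ (the difference $\id_{C'} - f\circ g$ lies in the radical in each degree, hence $f \circ g$ is invertible in each degree and thus an isomorphism of complexes) shows $f \circ g$ is already the identity. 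Either route works; the first is cleaner to write, so that is the one I would include.
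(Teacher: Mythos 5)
Your proposal is correct and follows exactly the paper's argument: apply part (1) of Lemma \ref{lem:minimalcomplexsplit} to get a chain-level left inverse, part (2) to get a chain-level right inverse, and then observe by the standard computation that these coincide. The alternative route you sketch at the end is also fine, but the first is the one the paper uses.
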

\begin{proof}
	By Lemma \ref{lem:minimalcomplexsplit}, $f$ has a left inverse and a right inverse in $C(\mathcal{A})$. It is straightforward to check that these must coincide, so $f$ is invertible, as claimed.
\end{proof}

From now on, suppose that $\mathcal{A}$ is a Krull-Schmidt category, i.e.\ that every object of $\mathcal{A}$ can be written as a finite direct sum of objects with local endomorphism rings.
Then every object of $\mathcal{A}$ admits a \emph{Krull-Schmidt decomposition} as a finite direct sum of indecomposable objects, and the multiplicity of a given indecomposable object in such a decomposition is independent of the chosen decomposition; see Theorem 4.2 in \cite{KrauseKrullSchmidt}.
We denote the multiplicity of an indecomposable object $N$ of $\mathcal{A}$ in a Krull-Schmidt decomposition of an object $M$ of $\mathcal{A}$ by $[M : N]_\oplus$.
In this setting we can give an alternative characterization of the radical of $\mathcal{A}$ as follows:

\begin{Lemma} \label{lem:radical}
	Let $A$ and $B$ be objects of $\mathcal{A}$ and $f\in\Hom_\mathcal{A}(A,B)$. The following are equivalent:
	\begin{enumerate}
		\item $f\in \mathrm{rad}_\mathcal{A}(A,B)$;
		\item no isomorphism between non-zero objects of $\mathcal{A}$ factors through $f$;
		\item no isomorphism between indecomposable objects of $\mathcal{A}$ factors through $f$.
	\end{enumerate}
\end{Lemma}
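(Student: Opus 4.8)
The statement to prove is Lemma~\ref{lem:radical}, characterizing the radical of a Krull--Schmidt category in terms of factorizations of isomorphisms.

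\medskip

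The plan is to prove the chain of implications $(1) \Rightarrow (2) \Rightarrow (3) \Rightarrow (1)$.

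For $(1) \Rightarrow (2)$: suppose $f \in \rad_\mathcal{A}(A,B)$ and that an isomorphism $\phi \colon C \to C'$ between non-zero objects factors as $\phi = b \circ f \circ a$ with $a \colon C \to A$ and $b \colon B \to C'$. Composing with $\phi^{-1}$ on the left, we get $\id_C = (\phi^{-1} \circ b) \circ f \circ a$, so by the definition of the radical this endomorphism of $C$ lies in $J(\End_\mathcal{A}(C))$. But a non-zero object $C$ in a Krull--Schmidt category has a semiperfect endomorphism ring, in particular $\id_C \notin J(\End_\mathcal{A}(C))$ (the Jacobson radical of a ring containing $1$ never contains $1$), a contradiction. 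This shows no isomorphism between non-zero objects factors through $f$. The implication $(2) \Rightarrow (3)$ is immediate since indecomposable objects are in particular non-zero.

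\medskip

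The main work is $(3) \Rightarrow (1)$, i.e.\ the contrapositive: if $f \notin \rad_\mathcal{A}(A,B)$ then some isomorphism between indecomposable objects factors through $f$. By definition of the radical, $f \notin \rad_\mathcal{A}(A,B)$ means there exist $a \colon C \to A$ and $b \colon B \to C$ with $g \coloneqq b \circ f \circ a \notin J(\End_\mathcal{A}(C))$. Using the Krull--Schmidt decomposition $C \cong \bigoplus_{k} C_k$ into indecomposables with local endomorphism rings, write $g$ as a matrix $(g_{k\ell})$ with $g_{k\ell} \colon C_\ell \to C_k$. Since $\End_\mathcal{A}(C)$ is semiperfect with $J(\End_\mathcal{A}(C))$ consisting exactly of those matrices all of whose ``diagonal-type'' entries along indecomposable summands are non-isomorphisms, the failure $g \notin J$ forces some entry $g_{k\ell} \colon C_\ell \to C_k$ to be an isomorphism (here one uses that for indecomposables with local endomorphism rings, a morphism is either an isomorphism or lies in the radical, and that a matrix lies in $J$ of the full endomorphism ring iff every component lies in the radical of $\Hom$). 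Then precomposing $a$ with the inclusion $C_\ell \hookrightarrow C$ and postcomposing $b$ with the projection $C \twoheadrightarrow C_k$ exhibits the isomorphism $g_{k\ell}$ between the indecomposables $C_\ell$ and $C_k$ as a factorization through $f$, as desired.

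\medskip

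The step I expect to be the main obstacle is the precise bookkeeping in $(3) \Rightarrow (1)$: matching up the definition of $\rad_\mathcal{A}$ (which quantifies over all objects $C$ and all pre/post-compositions landing in $\End_\mathcal{A}(C)$) with the concrete Krull--Schmidt matrix description, and invoking cleanly the fact that for a finite biproduct of objects with local endomorphism rings, an endomorphism lies in the Jacobson radical precisely when none of its component maps between indecomposable summands is an isomorphism. This last fact is standard (see Section~2 and Theorem~4.2 in \cite{KrauseKrullSchmidt}, and Section~2 in \cite{KrauseDerivedCategory} for the radical of an additive category), so in the write-up I would simply cite it rather than reprove it; the only genuinely new content is threading the factorization through the summand projections and inclusions.
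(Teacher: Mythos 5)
Your proof is correct. Implications $(1)\Rightarrow(2)$ and $(2)\Rightarrow(3)$ match the paper essentially verbatim. For $(3)\Rightarrow(1)$ you take a slightly different route: the paper argues \emph{directly} that $f\in\rad_\mathcal{A}(A,B)$ by choosing Krull--Schmidt decompositions of $A$ and $B$ themselves, writing $f=(f_{ij})$ with $f_{ij}\colon A_j\to B_i$, observing that each $f_{ij}$ factors through $f$ and so cannot be an isomorphism, and then invoking $\rad_\mathcal{A}(A,B)=\bigoplus_{i,j}\rad_\mathcal{A}(A_j,B_i)$ together with the fact that between indecomposables the radical is exactly the set of non-isomorphisms. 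You instead prove the \emph{contrapositive}, take a witness test object $C$ from the definition of $\rad_\mathcal{A}(A,B)$, decompose $C$, and extract an isomorphic matrix entry $g_{k\ell}$ of $g=bfa$. Both arguments rest on exactly the same Krull--Schmidt facts from the same reference (\cite{KrauseKrullSchmidt}), so the difference is organizational rather than mathematical; the paper's version is marginally shorter since it avoids introducing the auxiliary object $C$, while yours makes more explicit how a failing test object manufactures the offending isomorphism. One small remark: in your $(1)\Rightarrow(2)$ the appeal to semiperfectness of $\End_\mathcal{A}(C)$ is unnecessary --- $\id_C\notin J\big(\End_\mathcal{A}(C)\big)$ simply because the Jacobson radical of a nonzero unital ring never contains the identity, as you yourself note parenthetically.
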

\begin{proof}
	If some isomorphism $g\colon C \to D$ between non-zero objects of $\mathcal{A}$ factors through $f$ then so does $\id_C=g^{-1} \circ g$, so we can write $\id_C=b\circ f \circ a$ for certain morphisms $a\colon C \to A$ and $b \colon B \to C$.
	As $\id_C$ does not belong to the Jacobson radical of $\End_\mathcal{A}(C)$, we conclude that $f \notin \rad_\mathcal{A}(A,B)$ and that (1) implies (2).
	It is obvious that (2) implies (3).
	Now assume (3) and fix Krull-Schmidt decompositions
	\[ A = A_1 \oplus \cdots \oplus A_m \qquad \text{and} \qquad B = B_1 \oplus \cdots \oplus B_n . \]
	Then we can write $f = (f_{ij})_{i,j}$ with $f_{ij} \in \Hom_\mathcal{A}(A_j,B_i)$, and $f_{ij}$ is non-invertible because $f_{ij}$ factors through $f$ for all $1 \leq i \leq n$ and $1 \leq j \leq m$.
	It follows that $f \in \rad(A,B)$ because
	\[ \rad_\mathcal{A}(A,B) = \bigoplus_{i,j} \rad_\mathcal{A}(A_j,B_i) \]
	and $\rad_\mathcal{A}(A_j,B_i)$ is the set of non-invertible homomorphism from $A_j$ to $B_i$, as observed in the discussion after Corollary 4.4 in \cite{KrauseKrullSchmidt}.
\end{proof}

Next we show that every bounded complex over a Krull-Schmidt category is homotopy equivalent to a unique minimal complex.
The key tool in the proof is D.\ Bar-Natan's `Gaussian elimination on complexes'; see \cite[Lemma 4.2]{BarNatan}.

\begin{Lemma} \label{lem:minimalcomplex}
	Every bounded complex $C$ over $\mathcal{A}$ is homotopy equivalent to a minimal complex $C_\mathrm{min}$, and $C_\mathrm{min}$ is unique up to isomorphism in the category of complexes $C^b(\mathcal{A})$.
	Furthermore, $C_\mathrm{min}$ is a direct summand of $C$ in $C^b(\mathcal{A})$.
\end{Lemma}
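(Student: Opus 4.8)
The plan is to prove existence and uniqueness separately, and then upgrade the construction to show $C_\mathrm{min}$ is a direct summand of $C$. Uniqueness is already in hand: if $C_\mathrm{min}$ and $C_\mathrm{min}'$ are both minimal and homotopy equivalent to $C$, then they are homotopy equivalent to each other, and Corollary \ref{cor:minimalcomplexunique} gives an isomorphism in $C^b(\mathcal{A})$. So the real content is existence together with the direct-summand claim. I would obtain both at once by an inductive stripping procedure, working one differential at a time.

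First I would set up Bar-Natan's Gaussian elimination in the form I need. Suppose $C=(A_\bullet,d_\bullet)$ is a bounded complex and some differential $d_n\colon A_n\to A_{n+1}$ is \emph{not} in $\rad_\mathcal{A}(A_n,A_{n+1})$. By Lemma \ref{lem:radical}, an isomorphism between indecomposable objects factors through $d_n$; using Krull-Schmidt decompositions $A_n=\bigoplus_j A_n^{(j)}$ and $A_{n+1}=\bigoplus_i A_{n+1}^{(i)}$ and decomposing $d_n=(d_n^{ij})$, this produces indices with $d_n^{ij}$ an isomorphism between indecomposables. Writing $A_n = U \oplus U'$ and $A_{n+1} = V \oplus V'$ with $U \cong V$ via the relevant component of $d_n$, Gaussian elimination (\cite[Lemma 4.2]{BarNatan}) shows that $C$ is isomorphic in $C^b(\mathcal{A})$ to the direct sum of the two-term complex $(\cdots\to 0\to U\xrightarrow{\ \sim\ }V\to 0\to\cdots)$, which is contractible, and a complex $\widetilde C$ whose terms are those of $C$ except with $A_n$ replaced by $U'$ and $A_{n+1}$ replaced by $V'$. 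In particular $\widetilde C$ is a direct summand of $C$ in $C^b(\mathcal{A})$ and is homotopy equivalent to $C$, and the total number of indecomposable summands appearing among the terms of $\widetilde C$ is strictly smaller than that of $C$.

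Now I would run the induction on $N(C) \coloneqq \sum_{i\in\Z}(\text{number of indecomposable summands of }A_i)$, which is finite since $C$ is bounded and $\mathcal{A}$ is Krull-Schmidt. If every $d_i$ lies in the radical, $C$ is already minimal and there is nothing to do. Otherwise apply the reduction above to some non-radical $d_n$ to get $\widetilde C$ with $N(\widetilde C)<N(C)$, a direct summand of $C$ in $C^b(\mathcal{A})$ and homotopy equivalent to it; by induction $\widetilde C$ is homotopy equivalent to a minimal complex $C_\mathrm{min}$ which is a direct summand of $\widetilde C$ in $C^b(\mathcal{A})$. Transitivity of homotopy equivalence and of "being a direct summand in $C^b(\mathcal{A})$" then finishes the argument.

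The main obstacle — really the only subtle point — is making sure the reduction step both decreases $N$ and is carried out inside $C^b(\mathcal{A})$ (not just $K^b(\mathcal{A})$), i.e.\ verifying that Gaussian elimination really produces an honest isomorphism of complexes $C \cong \widetilde C \oplus (\text{contractible})$ and not merely a homotopy equivalence. I would handle this by quoting \cite[Lemma 4.2]{BarNatan} in its on-the-nose form and checking that the change of basis it prescribes on $A_n$ and $A_{n+1}$ (and the induced modification of $d_{n-1}$ and $d_{n+1}$) is invertible in $\mathcal{A}$ — which it is, being built from the identity plus a correction through the isomorphism $U \xrightarrow{\sim} V$. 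Everything else is bookkeeping with Krull-Schmidt multiplicities and the transitivity statements noted above.
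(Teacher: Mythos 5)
Your proposal is correct and follows essentially the same route as the paper: uniqueness via Corollary \ref{cor:minimalcomplexunique}, existence by locating an isomorphism between indecomposable summands inside a non-radical differential (Lemma \ref{lem:radical}), applying Bar-Natan's Gaussian elimination to split off a contractible two-term complex, and inducting on the total number of indecomposable summands, with the direct-summand claim carried along through the construction.
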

\begin{proof}
	The uniqueness statement is clear from Corollary \ref{cor:minimalcomplexunique}.
	If $C$ is minimal then there is nothing to show, so now write $C=(A_\bullet,d_\bullet)$ and suppose that $d_i \notin \mathrm{rad}_\mathcal{A}(A_i,A_{i+1})$ for some $i\in\Z$.
	By Lemma \ref{lem:radical}, there exists an indecomposable object $M$ of $\mathcal{A}$ such that $\id_M$ factors through $d_i$, so $\id_M =b \circ d_i \circ a$ for some $a\colon M \to A_i$ and $b\colon A_{i+1} \to M$.
	Then $a$ is a split monomorphism and $b$ is a split epimorphism, so
	\[ A_i \cong B_i \oplus M \qquad \text{and} \qquad A_{i+1} \cong B_{i+1} \oplus M \]
	for certain objects $B_i$ and $B_{i+1}$ of $\mathcal{A}$.
	Consequently, we can write $C$ as
	\begin{center}
		\begin{tikzpicture}[scale=.8]
		\node (Z1) at (-2,0) {$\cdots$};
		\node (A1) at (0,0) {$A_{i-1}$};
		\node (B1) at (4,0) {$B_i \oplus M$};
		\node (C1) at (9,0) {$B_{i+1} \oplus M$};
		\node (D1) at (13,0) {$A_{i+2}$};
		\node (E1) at (15,0) {$\cdots$,};
		\draw[->] (Z1) -- (A1);
		\draw[->] (A1) -- node[above]{\footnotesize $\begin{pmatrix}
			f_1 \\ f_2
			\end{pmatrix}$} (B1);
		\draw[->] (B1) -- node[above]{\footnotesize $\begin{pmatrix}
			d_{11} & d_{12} \\ d_{21} & \id_M
			\end{pmatrix}$} (C1);
		\draw[->] (C1) -- node[above]{\footnotesize $\begin{pmatrix}
			g_1 & g_2
			\end{pmatrix}$} (D1);
		\draw[->] (D1) -- (E1);
		\end{tikzpicture}
	\end{center}
	and using `Gaussian elimination on complexes' as in \cite[Lemma 4.2]{BarNatan},
	we see that $C$ is isomorphic to a complex of the form
	\begin{equation} \label{eq:directsumofcomplexes}
		\begin{tikzpicture}[baseline={(current bounding box.center)},scale=.8]
			\node (Z1) at (-2,0) {$\cdots$};
			\node (A1) at (0,0) {$A_{i-1}$};
			\node (B1) at (4,0) {$B_i \oplus M$};
			\node (C1) at (9,0) {$B_{i+1} \oplus M$};
			\node (D1) at (13,0) {$A_{i+2}$};
			\node (E1) at (15,0) {$\cdots$,};
			\draw[->] (Z1) -- (A1);
			\draw[->] (A1) -- node[above]{\footnotesize $\begin{pmatrix}
				f_1 \\ 0
				\end{pmatrix}$} (B1);
			\draw[->] (B1) -- node[above]{\footnotesize $\begin{pmatrix}
				\psi & 0 \\ 0 & \id_M
				\end{pmatrix}$} (C1);
			\draw[->] (C1) -- node[above]{\footnotesize $\begin{pmatrix}
				g_1 & 0
				\end{pmatrix}$} (D1);
			\draw[->] (D1) -- (E1);
		\end{tikzpicture}
	\end{equation}
	where $\psi=d_{11} - d_{12} \circ d_{21}$.
	Now the complex in \eqref{eq:directsumofcomplexes} is isomorphic to the direct sum of the complexes
	\[ C^\prime \colon ~ \cdots \to A_{i-1} \xrightarrow{\,f_1\,} B_i \xrightarrow{\,\psi\,} B_{i+1} \xrightarrow{\,g_1\,} A_{i+2} \to \cdots \qquad \text{and} \qquad C^{\prime\prime} \colon ~ \cdots \to 0 \to M \xrightarrow{\,\id_M\,} M \to 0 \to \cdots \]
	(with $M$ in homological degrees $i$ and $i+1$). As $C^{\prime\prime}$ is homotopy equivalent to the zero complex, we conclude that $C$ is homotopy equivalent to $C^\prime$.
	Now the existence of $C_\mathrm{min}$ easily follows by induction on the sum of the numbers of indecomposable direct summands of the terms in $C$.
	The final claim is a consequence of the construction, since $C^\prime$ is a direct summand of $C$ in $C^b(\mathcal{A})$.
	Alternatively, we can just note that a homotopy equivalence between $C_\mathrm{min}$ and $C$ is a split monomorphism in $K^b(\mathcal{A})$, and thus also a split monomorphism in $C^b(\mathcal{A})$ by Lemma \ref{lem:minimalcomplexsplit}.
\end{proof}

\begin{Definition}
	Let $C$ be a complex over $\mathcal{A}$ and let $C_\mathrm{min}$ be the unique minimal complex in the homotopy class of $C$. We say that $C_\mathrm{min}$ is the \emph{minimal complex of $C$}.
\end{Definition}

\begin{Corollary} \label{cor:minimalcomplexdirectsummandbound}
	Let $C$ be a bounded complex over $\mathcal{A}$, with minimal complex $C_\mathrm{min}$, and let $M$ be an indecomposable object of $\mathcal{A}$.
	Write $C$ and $C_\mathrm{min}$ as
	\[ \cdots \longrightarrow A_i \longrightarrow A_{i+1} \longrightarrow \cdots \qquad \text{and} \qquad \cdots \longrightarrow B_i \longrightarrow B_{i+1} \longrightarrow \cdots , \]
	respectively.
	Then
	\[ [ A_i : M ]_\oplus \geq [ B_i : M ]_\oplus \geq [ A_i : M ]_\oplus - [ A_{i-1} : M ]_\oplus - [ A_{i+1} : M ]_\oplus \]
	for all $i\in\Z$.
\end{Corollary}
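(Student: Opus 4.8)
The plan is to build on the construction used in the proof of Lemma~\ref{lem:minimalcomplex}, which produces $C_\mathrm{min}$ from $C$ by repeatedly splitting off, in $C^b(\mathcal{A})$, one \emph{elementary contractible complex} at a time, that is, a complex of the form $\cdots \to 0 \to N \xrightarrow{\,\id_N\,} N \to 0 \to \cdots$ with $N$ indecomposable and concentrated in two consecutive homological degrees (this is the role of $C^{\prime\prime}$ in that proof). Iterating gives an isomorphism
\[ C \;\cong\; C_\mathrm{min} \oplus \bigoplus_{k=1}^{r} E^{(k)} \qquad \text{in } C^b(\mathcal{A}), \]
where the object $N_k$ of $E^{(k)}$ sits in homological degrees $j_k$ and $j_k+1$. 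First I would record the additivity of Krull--Schmidt multiplicities: if $X \cong Y \oplus Z$ in $\mathcal{A}$ then $[X:M]_\oplus = [Y:M]_\oplus + [Z:M]_\oplus$ (concatenate the decompositions and use their uniqueness), and $[N_k:M]_\oplus \in \{0,1\}$ since $N_k$ is indecomposable.

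Next I would evaluate the displayed decomposition in homological degree $i$. Writing $P_i \coloneqq \bigoplus_{k \,:\, j_k = i} N_k$ for the direct sum of the objects of those elementary summands that begin in degree $i$, one gets $A_i \cong B_i \oplus P_i \oplus P_{i-1}$ for every $i$, hence
\[ [A_i:M]_\oplus \;=\; [B_i:M]_\oplus + [P_i:M]_\oplus + [P_{i-1}:M]_\oplus . \]
Since the last two terms are non-negative, this already gives the first inequality $[A_i:M]_\oplus \geq [B_i:M]_\oplus$. For the second inequality I would apply the same degree-wise computation to $i+1$ and $i-1$, obtaining $[A_{i+1}:M]_\oplus \geq [P_i:M]_\oplus$ and $[A_{i-1}:M]_\oplus \geq [P_{i-1}:M]_\oplus$; substituting these into the identity above, rewritten as $[B_i:M]_\oplus = [A_i:M]_\oplus - [P_i:M]_\oplus - [P_{i-1}:M]_\oplus$, yields exactly $[B_i:M]_\oplus \geq [A_i:M]_\oplus - [A_{i-1}:M]_\oplus - [A_{i+1}:M]_\oplus$.

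The only point needing genuine care is the structural claim that the complement of $C_\mathrm{min}$ in $C$ decomposes as a direct sum of elementary two-term contractible complexes. If one prefers not to extract this from the proof of Lemma~\ref{lem:minimalcomplex}, it can be proved directly by induction on the length of the support of a bounded contractible complex $D$: at the least nonzero degree $a$, the contracting homotopy forces $\delta_a\colon D_a \to D_{a+1}$ to be a split monomorphism, so $D_{a+1} \cong D_a \oplus D_{a+1}'$, and Gaussian elimination (as in \cite[Lemma 4.2]{BarNatan}) splits off $0 \to D_a \xrightarrow{\,\id\,} D_a \to 0$, leaving a strictly shorter contractible complex. I expect this bookkeeping — keeping track of which elementary summand contributes to which homological degree — to be the main, though still essentially routine, obstacle; the rest is formal manipulation of the multiplicities $[\,\cdot\,:M]_\oplus$.
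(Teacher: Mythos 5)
Your proof is correct and follows essentially the same approach as the paper: both extract from the construction in Lemma~\ref{lem:minimalcomplex} that $C$ decomposes in $C^b(\mathcal{A})$ as $C_\mathrm{min}$ plus a direct sum of two-term contractible complexes supported in consecutive degrees, and then count occurrences of $M$ degree by degree. Your write-up is a bit more explicit about the bookkeeping (introducing $P_i$ and deriving the exact identity $[A_i:M]_\oplus = [B_i:M]_\oplus + [P_i:M]_\oplus + [P_{i-1}:M]_\oplus$ before passing to inequalities), which if anything makes the argument cleaner than the paper's informal phrasing.
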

\begin{proof}
	By the uniqueness of minimal complexes and by the proof of Lemma \ref{lem:minimalcomplex}, we can obtain $C_\mathrm{min}$ from $C$ by successively removing pairs of isomorphic indecomposable direct summands from two adjacent terms $A_j$ and $A_{j+1}$ of $C$.
	The maximum number of times that an indecomposable direct summand isomorphic to $M$ can be removed from $A_i$ in this fashion is $[ A_{i-1} : M ]_\oplus + [ A_{i+1} : M ]_\oplus$, and the claim follows.
\end{proof}

\subsection{Minimal complexes of tilting objects} \label{subsec:minimalcomplexestilting}

Now suppose that $\mathcal{C}$ is a highest weight category with weight poset $(\Lambda,\leq)$.
As $\mathrm{Tilt}(\mathcal{C})$ is a Krull-Schmidt category, the theory of minimal complexes explained in the previous subsection can be applied.
Let $X$ be a bounded complex in $\mathcal{C}$.
Then $X$ corresponds to a unique homotopy class in $K^b\big( \Tilt(\mathcal{C}) \big)$ under the equivalence
\[ \mathfrak{T} \colon K^b\big( \Tilt(\mathcal{C}) \big) \longrightarrow D^b( \mathcal{C} ) \]
from Proposition \ref{prop:tiltinghomotopyderived}, and by Lemma \ref{lem:minimalcomplex}, this homotopy class contains a unique minimal complex, up to isomorphism in $C^b( \mathcal{C} )$.

\begin{Definition}
	For a bounded complex $X$ in $\mathcal{C}$, we write $C_\mathrm{min}(X)$ for the unique (up to isomorphism) minimal bounded complex of tilting objects with $C_\mathrm{min}(X) \cong X$ in $D^b(\mathcal{C})$, and we call $C_\mathrm{min}(X)$ the \emph{minimal tilting complex of $X$}.
	For an object $M$ of $\mathcal{C}$, we define the \emph{minimal tilting complex $C_\mathrm{min}(M)$ of $M$} by viewing $M$ as a complex concentrated in degree zero. 
\end{Definition}

\begin{Remark} \label{rem:isomorphicinderivedcohomology}
	For an object $M$ of $\mathcal{C}$ and a bounded complex $X$ in $\mathcal{C}$, we have $X \cong M$ in $D^b( \mathcal{C} )$ if and only if $H^0(X) \cong M$ and $H^i(X) = 0$ for $i \neq 0$.
	In particular, the minimal tilting complex $C_\mathrm{min}(M)$ of $M$ is the unique minimal bounded complex of tilting objects with
\[ H^i\big( C_\mathrm{min}(M) \big) \cong \begin{cases}
M & \text{if } i=0 , \\ 0 & \text{otherwise} .
\end{cases} \]
\end{Remark}

\begin{Remark} \label{rem:minimaltiltingcomplexoftilting}
	For any tilting object $M$ of $\mathcal{C}$, we have $C_\mathrm{min}(M) = M$, where we view $M$ as a complex concentrated in degree zero.
\end{Remark}

\subsection{General properties}

In the following, we will establish some important properties of minimal tilting complexes.
We start by observing that these complexes are well-behaved with respect to direct sums.

\begin{Lemma} \label{lem:minimaltiltingcomplex}
	Let $X$ and $Y$ be bounded complexes in $\mathcal{C}$.
	\begin{enumerate}
		\item We have $C_\mathrm{min}( X \oplus Y )\cong C_\mathrm{min}( X ) \oplus C_\mathrm{min}( Y )$ in $C^b\big( \mathrm{Tilt}( \mathcal{C} ) \big)$.
		\item If $C$ is a bounded complex of tilting objects with $C \cong X$ in $D^b( \mathcal{C} )$ then $C_\mathrm{min}(X)$ is the minimal complex of $C$ and there is a split monomorphism $C_\mathrm{min}(X) \to C$ in $C^b\big( \mathrm{Tilt}(\mathcal{C}) \big)$.
	\end{enumerate}
\end{Lemma}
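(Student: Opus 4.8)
Lemma \ref{lem:minimaltiltingcomplex} asserts two facts: (1) the minimal tilting complex functor commutes with direct sums, and (2) any bounded complex of tilting objects representing $X$ in $D^b(\mathcal{C})$ has $C_\mathrm{min}(X)$ as its minimal complex, with a split monomorphism $C_\mathrm{min}(X) \to C$ in $C^b(\Tilt(\mathcal{C}))$. Here is the plan.

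\medskip

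\noindent\textbf{Proof plan.} For part (1), the strategy is to pass through the equivalence $\mathfrak{T} \colon K^b(\Tilt(\mathcal{C})) \to D^b(\mathcal{C})$ and use that minimal complexes are unique up to isomorphism. First I would note that $C_\mathrm{min}(X)$ and $C_\mathrm{min}(Y)$ are minimal complexes of tilting objects with $C_\mathrm{min}(X) \cong X$ and $C_\mathrm{min}(Y) \cong Y$ in $D^b(\mathcal{C})$. Their direct sum $C_\mathrm{min}(X) \oplus C_\mathrm{min}(Y)$ is then a complex of tilting objects (a direct sum of tilting objects is tilting) that is isomorphic to $X \oplus Y$ in $D^b(\mathcal{C})$, since $\mathfrak{T}$ is additive and hence respects direct sums. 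It remains to check that $C_\mathrm{min}(X) \oplus C_\mathrm{min}(Y)$ is minimal, i.e.\ that each differential lies in the radical of $\Tilt(\mathcal{C})$. But the differentials of a direct sum of complexes are the block-diagonal maps built from the differentials of the summands, and $\rad_{\mathcal{A}}(A \oplus A', B \oplus B') = \rad_{\mathcal{A}}(A,B) \oplus \rad_{\mathcal{A}}(A',B') \oplus \rad_{\mathcal{A}}(A,B') \oplus \rad_{\mathcal{A}}(A',B)$ in a Krull--Schmidt category (as recalled in the proof of Lemma \ref{lem:radical}); since the off-diagonal blocks are zero and the diagonal blocks lie in the respective radicals by minimality of $C_\mathrm{min}(X)$ and $C_\mathrm{min}(Y)$, the direct sum is minimal. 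By the uniqueness of minimal complexes (Corollary \ref{cor:minimalcomplexunique}), it follows that $C_\mathrm{min}(X \oplus Y) \cong C_\mathrm{min}(X) \oplus C_\mathrm{min}(Y)$ in $C^b(\Tilt(\mathcal{C}))$.

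\medskip

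\noindent\textbf{Part (2).} Suppose $C$ is a bounded complex of tilting objects with $C \cong X$ in $D^b(\mathcal{C})$. Under the equivalence $\mathfrak{T}$, $C$ and $C_\mathrm{min}(X)$ represent the same object of $D^b(\mathcal{C})$, hence are isomorphic in $K^b(\Tilt(\mathcal{C}))$; that is, $C$ and $C_\mathrm{min}(X)$ are homotopy equivalent. In particular $C$ lies in the homotopy class of $X$, so by Lemma \ref{lem:minimalcomplex} applied to the Krull--Schmidt category $\Tilt(\mathcal{C})$, the minimal complex of $C$ exists, is unique up to isomorphism in $C^b(\Tilt(\mathcal{C}))$, and is a direct summand of $C$ in $C^b(\Tilt(\mathcal{C}))$. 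Since $C_\mathrm{min}(X)$ is a minimal complex in the homotopy class of $C$, the uniqueness clause of Lemma \ref{lem:minimalcomplex} identifies it (up to isomorphism in $C^b(\Tilt(\mathcal{C}))$) with the minimal complex of $C$. Finally, the homotopy equivalence $C_\mathrm{min}(X) \to C$ is a split monomorphism in $K^b(\Tilt(\mathcal{C}))$ (it is even an isomorphism there), and since $C_\mathrm{min}(X)$ is minimal, Lemma \ref{lem:minimalcomplexsplit}(1) upgrades it to a split monomorphism in $C^b(\Tilt(\mathcal{C}))$.

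\medskip

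\noindent\textbf{Main obstacle.} The content of this lemma is genuinely formal once the machinery of Section \ref{subsec:minimalcomplexes} and Proposition \ref{prop:tiltinghomotopyderived} is in place, so I do not anticipate a serious difficulty. The one point that requires a little care is the verification that a direct sum of minimal complexes is minimal — this hinges on the decomposition of the radical over a biproduct in a Krull--Schmidt category, which is exactly the fact invoked at the end of the proof of Lemma \ref{lem:radical}. Everything else is a matter of transporting statements across $\mathfrak{T}$ and quoting Lemmas \ref{lem:minimalcomplexsplit} and \ref{lem:minimalcomplex} together with Corollary \ref{cor:minimalcomplexunique}.
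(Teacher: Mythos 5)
Your proof is correct and follows essentially the same approach as the paper: part (1) rests on the observation that a direct sum of minimal complexes is minimal (the paper states this in one line; you spell out the radical decomposition over a biproduct, which is fine), and part (2) transports the isomorphism across $\mathfrak{T}$ to $K^b(\Tilt(\mathcal{C}))$ and then invokes Lemma \ref{lem:minimalcomplexsplit}, exactly as the paper does.
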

\begin{proof}
	The first claim follows from the observation that a direct sum of minimal complexes is minimal.
	If $C$ is a bounded complex of tilting objects with $C \cong X$ in $D^b( \mathcal{C} )$ then we also have $C \cong C_\mathrm{min}(X)$  in $D^b( \mathcal{C} )$.
	Using the equivalence from Proposition \ref{prop:tiltinghomotopyderived}, it follows that $C \cong C_\mathrm{min}(X)$ in $K^b\big( \mathrm{Tilt}(\mathcal{C}) \big)$, whence $C_\mathrm{min}(X)$ is the minimal complex of $C$.
	By Lemma \ref{lem:minimalcomplexsplit}, any homotopy equivalence from $C_\mathrm{min}(X)$ to $C$ is a split monomorphism in $C^b\big( \Tilt(\mathcal{C}) \big)$.
\end{proof}

Recall that for an order ideal $\Gamma \subseteq \Lambda$, the truncated subcategory $\mathcal{C}_\Gamma$ of $\mathcal{C}$ is also a highest weight category.
The next lemma shows that the minimal tilting complex $C_\mathrm{min}(M)$ of an object $M$ of $\mathcal{C}$ determines whether $M$ belongs to $\mathcal{C}_\Gamma$.

\begin{Lemma} \label{lem:minimaltiltingcomplexorderideal}
	Let $\Gamma \subseteq \Lambda$ be an order ideal.
	An object $M$ of $\mathcal{C}$ belongs to $\mathcal{C}_\Gamma$ if and only if all terms of the minimal tilting complex $C_\mathrm{min}(M)$ belong to $\mathcal{C}_\Gamma$.
\end{Lemma}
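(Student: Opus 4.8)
The plan is to prove both implications using Proposition~\ref{prop:derivedtruncated}, which identifies the essential image of $R\,i_\Gamma \colon D^b(\mathcal{C}_\Gamma) \to D^b(\mathcal{C})$ with the full subcategory $D^b_{\mathcal{C}_\Gamma}(\mathcal{C})$ of complexes with cohomology in $\mathcal{C}_\Gamma$, together with the fact that a tilting object of $\mathcal{C}$ that lies in $\mathcal{C}_\Gamma$ is automatically a tilting object of $\mathcal{C}_\Gamma$. First I would treat the ``if'' direction: suppose all terms of $C_\mathrm{min}(M)$ lie in $\mathcal{C}_\Gamma$. Since $\mathcal{C}_\Gamma$ is a Serre subcategory, the cohomology objects of $C_\mathrm{min}(M)$ all lie in $\mathcal{C}_\Gamma$ as well; but by Remark~\ref{rem:isomorphicinderivedcohomology} the only non-zero cohomology of $C_\mathrm{min}(M)$ is $H^0 \cong M$, so $M \in \mathcal{C}_\Gamma$.

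For the ``only if'' direction, suppose $M \in \mathcal{C}_\Gamma$. The key point is that $\mathcal{C}_\Gamma$, being itself a highest weight category, has its own theory of minimal tilting complexes: viewing $M$ as an object of $\mathcal{C}_\Gamma$, there is a minimal bounded complex $C_\mathrm{min}^{\,\mathcal{C}_\Gamma}(M)$ of tilting objects \emph{of} $\mathcal{C}_\Gamma$ with $C_\mathrm{min}^{\,\mathcal{C}_\Gamma}(M) \cong M$ in $D^b(\mathcal{C}_\Gamma)$. I would then check that the indecomposable tilting objects of $\mathcal{C}_\Gamma$ are exactly the $T_\lambda$ with $\lambda \in \Gamma$ (this follows from Ringel's classification theorem applied to $\mathcal{C}_\Gamma$, since $T_\lambda \in \mathcal{C}_{\leq\lambda} \subseteq \mathcal{C}_\Gamma$ and $[T_\lambda : L_\lambda]=1$, and these exhaust the indecomposable tiltings as $\lambda$ ranges over the weight poset $\Gamma$ of $\mathcal{C}_\Gamma$); in particular every tilting object of $\mathcal{C}_\Gamma$ is a tilting object of $\mathcal{C}$ lying in $\mathcal{C}_\Gamma$, and conversely. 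Applying the functor $i_\Gamma \colon \mathcal{C}_\Gamma \to \mathcal{C}$ termwise, $C_\mathrm{min}^{\,\mathcal{C}_\Gamma}(M)$ becomes a bounded complex of tilting objects in $\mathcal{C}$, all of whose terms lie in $\mathcal{C}_\Gamma$; its terms are minimal in the sense that the differentials lie in $\rad_{\Tilt(\mathcal{C}_\Gamma)}$, and since $\Tilt(\mathcal{C}_\Gamma)$ is a full subcategory of $\Tilt(\mathcal{C})$ closed under direct summands, Lemma~\ref{lem:radical} shows the differentials also lie in $\rad_{\Tilt(\mathcal{C})}$, so the image complex is still minimal in $C^b(\Tilt(\mathcal{C}))$.

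It then remains to identify this complex with $C_\mathrm{min}(M)$. Since $R\,i_\Gamma$ is fully faithful (Proposition~\ref{prop:derivedtruncated}) and sends the class of $M$ in $D^b(\mathcal{C}_\Gamma)$ to the class of $M$ in $D^b(\mathcal{C})$, the complex $i_\Gamma\big(C_\mathrm{min}^{\,\mathcal{C}_\Gamma}(M)\big)$ is isomorphic to $M$ in $D^b(\mathcal{C})$. By the uniqueness of minimal tilting complexes (Lemma~\ref{lem:minimalcomplex}, via Proposition~\ref{prop:tiltinghomotopyderived}), it is therefore isomorphic to $C_\mathrm{min}(M)$ in $C^b(\Tilt(\mathcal{C}))$, and in particular all terms of $C_\mathrm{min}(M)$ lie in $\mathcal{C}_\Gamma$. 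The main obstacle, and the step deserving the most care, is the compatibility of the two notions of minimality and of Ringel/tilting structure under truncation: one must be sure that a complex minimal over $\Tilt(\mathcal{C}_\Gamma)$ remains minimal over $\Tilt(\mathcal{C})$ and that it really computes $M$ in $D^b(\mathcal{C})$ and not merely in $D^b(\mathcal{C}_\Gamma)$ — both of which rest on Proposition~\ref{prop:derivedtruncated} and on the description of $\Tilt(\mathcal{C}_\Gamma)$ inside $\Tilt(\mathcal{C})$.
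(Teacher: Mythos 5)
Your proof is correct and is essentially a detailed unpacking of the paper's one-line sketch, which simply invokes the commutative square of functors between $K^b\big(\Tilt(\mathcal{C}_\Gamma)\big)$, $D^b(\mathcal{C}_\Gamma)$, $K^b\big(\Tilt(\mathcal{C})\big)$ and $D^b(\mathcal{C})$. The ingredients you make explicit (Proposition~\ref{prop:derivedtruncated}, the identification of $\Tilt(\mathcal{C}_\Gamma)$ inside $\Tilt(\mathcal{C})$, persistence of minimality via Lemma~\ref{lem:radical}, and uniqueness of minimal complexes) are exactly what makes that diagram deliver the conclusion.
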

\begin{proof}
	This is straightforward to see from the definition of minimal tilting complexes and the commutative diagram of functors
	\begin{center}
		\begin{tikzpicture}
		\node (C1) at (0,0) {$K^b\big( \Tilt( \mathcal{C}_\Gamma ) \big)$};
		\node (C2) at (4,0) {$D^b( \mathcal{C}_\Gamma )$};
		\node (C3) at (0,-2) {$K^b\big( \Tilt( \mathcal{C} ) \big)$};
		\node (C4) at (4,-2) {$D^b( \mathcal{C} )$};

		\draw[->] (C1) -- (C2);
		\draw[->] (C1) -- (C3);
		\draw[->] (C2) -- (C4);
		\draw[->] (C3) -- (C4);
		\end{tikzpicture}
	\end{center}
	where the vertical arrows are induced by the canonical functor $\mathcal{C}_\Gamma \to \mathcal{C}$ and the horizontal arrows are given by the equivalence from Proposition \ref{prop:tiltinghomotopyderived} for $\mathcal{C}_\Gamma$ and $\mathcal{C}$, respectively.
\end{proof}

Next we observe that the $\nabla$-filtration dimension and the $\Delta$-filtration dimension of a non-zero object $M$ of $\mathcal{C}$ can easily be determined from the minimal tilting complex $C_\mathrm{min}(M)$.
We will need the following preliminary lemma.

\begin{Lemma} \label{lem:minimaltiltingcomplexnogap}
	Let $M$ be an object of $\mathcal{C}$ and write $C_\mathrm{min}(M) = (T_\bullet,d_\bullet)$.
	For any $i \geq 0$ such that $d_i = 0$, we have $T_j = 0$ for all $j > i$.
\end{Lemma}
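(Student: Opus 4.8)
The plan is to argue by contradiction, assuming that $d_i = 0$ for some $i \geq 0$ but that $T_{j_0} \neq 0$ for some $j_0 > i$; choose such a $j_0$ minimal, so in particular $T_{j_0 - 1} = 0$ if $j_0 - 1 > i$, or $d_i = 0$ already gives $T_i \xrightarrow{0} T_{i+1}$. The key point is that $C_\mathrm{min}(M)$ is isomorphic in $D^b(\mathcal{C})$ to $M$, which is concentrated in degree zero, so by Remark \ref{rem:isomorphicinderivedcohomology} all cohomology $H^j(C_\mathrm{min}(M))$ vanishes for $j \neq 0$. First I would use $d_i = 0$ to split the complex: the brutal truncations $C_{\leq i} = (\cdots \to T_{i-1} \to T_i \to 0)$ and $C_{> i} = (0 \to T_{i+1} \to T_{i+2} \to \cdots)$, with $T_{i+1}$ placed in degree $i+1$, fit into a short exact sequence of complexes $0 \to C_{> i} \to C_\mathrm{min}(M) \to C_{\leq i} \to 0$ which is even a direct sum $C_\mathrm{min}(M) = C_{\leq i} \oplus C_{> i}$ in $C^b(\Tilt(\mathcal{C}))$, precisely because the connecting differential $d_i$ between the two pieces is zero.

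Next I would identify the cohomology. Since $i \geq 0$ and all nonzero cohomology of $C_\mathrm{min}(M)$ sits in degree $0 \leq i$, the truncated piece $C_{> i}$, which is concentrated in degrees $\geq i+1 > 0$, must be exact: $H^j(C_{> i}) = H^j(C_\mathrm{min}(M)) = 0$ for all $j \geq i+1$, and $H^j(C_{>i}) = 0$ trivially for $j \leq i$. So $C_{> i}$ is an acyclic bounded complex of tilting objects. By Proposition \ref{prop:tiltinghomotopyderived}, the functor $\mathfrak{T} \colon K^b(\Tilt(\mathcal{C})) \to D^b(\mathcal{C})$ is an equivalence, so an acyclic complex of tilting objects is isomorphic to zero in $D^b(\mathcal{C})$ and hence also in $K^b(\Tilt(\mathcal{C}))$; that is, $C_{> i}$ is homotopy equivalent to the zero complex. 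But $C_{> i}$ is a direct summand of the minimal complex $C_\mathrm{min}(M)$ in $C^b(\Tilt(\mathcal{C}))$, hence $C_{> i}$ is itself a minimal complex (a direct summand of a complex with all differentials in the radical has all differentials in the radical). A minimal complex that is homotopy equivalent to zero must be the zero complex, by the uniqueness part of Lemma \ref{lem:minimalcomplex} (the zero complex is minimal, so $C_{>i} \cong 0$ in $C^b(\mathcal{C})$). Therefore $T_j = 0$ for all $j \geq i+1$, contradicting $T_{j_0} \neq 0$.

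The main obstacle — really the only subtle point — is making sure the splitting off of the tail is legitimate as a decomposition of \emph{complexes} (not merely a distinguished triangle), which is where the hypothesis $d_i = 0$ is used crucially: with $d_i = 0$ there are no differentials connecting $\{T_j : j \leq i\}$ to $\{T_j : j \geq i+1\}$ in either direction, so $C_\mathrm{min}(M) = C_{\leq i} \oplus C_{> i}$ genuinely holds in $C^b(\Tilt(\mathcal{C}))$. After that, everything is a formal consequence of Proposition \ref{prop:tiltinghomotopyderived} and the uniqueness of minimal complexes from Lemma \ref{lem:minimalcomplex}. One should also note, as a sanity check, that the statement is about $i \geq 0$ specifically: for $i < 0$ a vanishing differential need not force the earlier terms to vanish, because $M$ lives in degree $0$ and the left tail of the minimal complex encodes the $\Delta$-filtration dimension of $M$, which can be positive.
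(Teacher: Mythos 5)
Your proof is correct and relies on the same ingredients as the paper's: the splitting of $C_\mathrm{min}(M)$ induced by $d_i = 0$, the concentration of its cohomology in degree $0$, and the uniqueness of minimal complexes via Proposition~\ref{prop:tiltinghomotopyderived} and Lemma~\ref{lem:minimalcomplex}. The paper states it in the mirror form — observing directly that the lower truncation $C_{\leq i} = (\cdots \to T_{i-1} \to T_i \to 0 \to \cdots)$ is itself a minimal complex of tilting objects whose cohomology is $M$ concentrated in degree zero, hence isomorphic to $C_\mathrm{min}(M)$ in $C^b\big(\Tilt(\mathcal{C})\big)$ — whereas you show the complementary summand $C_{>i}$ is a minimal acyclic complex and therefore zero; these are two phrasings of the same argument.
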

\begin{proof}
	It is straightforward to verify that the complex $C$ given by
	\[ \qquad \cdots \xrightarrow{ d_{i-3} } T_{i-2} \xrightarrow{ d_{i-2} } T_{i-1} \xrightarrow{ d_{i-1} } T_i  \longrightarrow 0 \longrightarrow 0 \longrightarrow \cdots , \]
	 is minimal and has cohomology objects $H^0(M) \cong M$ and $H^k(M)=0$ for $k \neq 0$.
	 Thus $C \cong C_\mathrm{min}(M)$ in $C^b\big( \Tilt(\mathcal{C}) \big)$ and it follows that $T_j = 0$ for all $j>i$, as claimed.
\end{proof}

\begin{Lemma} \label{lem:minimaltiltingcomplexgfdwfd}
	Let $M$ be a non-zero object of $\mathcal{C}$ and write $C_\mathrm{min}(M) = (T_\bullet,d_\bullet)$.
	Then
	\[ \dim_\nabla(M) = \max\{ i \mid T_i \neq 0 \} \qquad \text{and} \qquad \dim_\Delta(M) = - \min\{ i \mid T_i \neq 0 \} . \]
\end{Lemma}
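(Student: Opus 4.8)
The plan is to prove the two equalities separately, each by a two-sided inequality, using the characterization of $\Ext$-groups as $\Hom$-groups in the derived category together with the fact (Lemma~\ref{lem:minimaltiltingcomplexnogap}) that a minimal tilting complex has no ``gaps'' on the right (and, by an analogous argument, none on the left). I will focus on $\dim_\nabla(M) = \max\{i \mid T_i \neq 0\}$; the statement for $\dim_\Delta$ is entirely dual, replacing $\Hom_\mathcal{C}(\Delta_\lambda,-)$ by $\Hom_\mathcal{C}(-,\nabla_\lambda)$ and using that $T_\lambda$ has a $\nabla$-filtration with the $\Ext$-vanishing \eqref{eq:extvanishing} applied on the other side. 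Set $m = \max\{i \mid T_i \neq 0\}$ and $n = \min\{i \mid T_i \neq 0\}$, so that $C_\mathrm{min}(M)$ is concentrated in degrees $[n,m]$.

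First I would show $\dim_\nabla(M) \leq m$. For $\lambda \in \Lambda$ and $d > m$, compute $\Ext_\mathcal{C}^d(\Delta_\lambda, M) = \Hom_{D^b(\mathcal{C})}(\Delta_\lambda, M[d]) \cong \Hom_{K^b(\Tilt(\mathcal{C}))}(C_\mathrm{min}(\Delta_\lambda), C_\mathrm{min}(M)[d])$ via the equivalence $\mathfrak{T}$ of Proposition~\ref{prop:tiltinghomotopyderived}. Here I want to use instead that $\Delta_\lambda$ itself, as a complex concentrated in degree $0$, already computes this: more directly, $\Ext_\mathcal{C}^d(\Delta_\lambda,M) = H^d\big(\Hom_\mathcal{C}^\bullet(\Delta_\lambda, C_\mathrm{min}(M))\big)$ because $C_\mathrm{min}(M)$ is a complex of $\nabla$-filtered objects, hence acyclic for $\Hom_\mathcal{C}(\Delta_\lambda,-)$ in the sense that $\Ext_\mathcal{C}^{>0}(\Delta_\lambda, T_i) = 0$ by \eqref{eq:extvanishing} (each $T_i$ has a $\nabla$-filtration). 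Since $C_\mathrm{min}(M)$ vanishes in degrees $> m$, this cohomology vanishes for $d > m$, giving $\dim_\nabla(M) \leq m$.

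The reverse inequality $\dim_\nabla(M) \geq m$ is the main obstacle, and is where minimality is essential. Suppose $\dim_\nabla(M) = d_0 < m$. Then $\Ext_\mathcal{C}^d(\Delta_\lambda, M) = 0$ for all $\lambda$ and all $d > d_0$, i.e.\ $H^d\big(\Hom_\mathcal{C}(\Delta_\lambda, C_\mathrm{min}(M))\big) = 0$ for $d > d_0$. I would then argue, working from the top degree $m$ downward, that this forces the differential $d_{m-1}\colon T_{m-1} \to T_m$ to be a split epimorphism, contradicting minimality (i.e.\ $d_{m-1} \in \rad_{\Tilt(\mathcal{C})}$, so no isomorphism factors through it, by Lemma~\ref{lem:radical}). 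Concretely: choose $\lambda$ maximal among highest weights of composition factors appearing in the head of $T_m$; by Lemma~\ref{lem:standardcostandardcomposition} (applied after truncating to an order ideal in which $\lambda$ is maximal, so $\Delta_\lambda$ is projective there and $\Hom_\mathcal{C}(\Delta_\lambda,-)$ is exact) there is a nonzero map $\Delta_\lambda \to T_m$ not annihilated by the quotient to the head, and $\Hom_\mathcal{C}(\Delta_\lambda, T_{m+1}) = \Hom_\mathcal{C}(\Delta_\lambda, 0) = 0$ shows this map is a cocycle; vanishing of $H^m$ then makes it a coboundary, i.e.\ it factors through $d_{m-1}$. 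Pushing this through all of $\Hom_\mathcal{C}(\Delta_\mu, -)$ for $\mu \leq \lambda$ and using that $\nabla_\lambda$ detects the relevant quotient (again Lemma~\ref{lem:standardcostandardcomposition}, via a map $T_m \to \nabla_\lambda$), one concludes that $T_m$ is a direct summand of $\operatorname{im}(d_{m-1})$ realized compatibly, so $d_{m-1}$ has a section on that summand — contradicting $d_{m-1} \in \rad$. Hence $m \leq d_0$, and combined with the first half, $\dim_\nabla(M) = m$. The dual argument, truncating on the other side and using projectivity/injectivity roles swapped, yields $\dim_\Delta(M) = -n$.
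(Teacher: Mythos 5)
Your strategy is genuinely different from the paper's, and the first half is right, but the hard direction has a real gap as written.

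The paper proves this lemma by purely ``filtration-dimension arithmetic'': it works with the short exact sequences $0 \to \ker(d_i) \to T_i \to \ker(d_{i+1}) \to 0$ and $0 \to \im(d_{-1}) \to \ker(d_0) \to M \to 0$, climbs up and down using Lemma~\ref{lem:GFDSES}, and at the crucial step observes that $\ker(d_{r-1})$ is $\Delta$-filtered but has a two-term minimal tilting complex $0 \to T_{r-1} \to T_r \to 0$, hence is not tilting and so has $\dim_\nabla = 1$. Your approach instead identifies $\Ext^d_\mathcal{C}(\Delta_\lambda,M)$ with $H^d\bigl(\Hom_\mathcal{C}(\Delta_\lambda, T_\bullet)\bigr)$ via the fact that tilting objects are $\Hom(\Delta_\lambda,-)$-acyclic, and then argues from minimality that this cohomology survives in top degree. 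The inequality $\dim_\nabla(M) \leq m$ you deduce this way is correct.

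The gap is in the reverse inequality. You need to show: if $H^m\bigl(\Hom_\mathcal{C}(\Delta_\mu, T_\bullet)\bigr) = 0$ for all $\mu$, then $d_{m-1}$ splits. But the argument you sketch only produces a single nonzero map $f\colon \Delta_\lambda \to T_m$ factoring through $d_{m-1}$, and the jump to ``$T_m$ is a direct summand of $\im(d_{m-1})$ realized compatibly, so $d_{m-1}$ has a section on that summand'' does not follow --- one nonzero map factoring through $d_{m-1}$ says nothing directly about surjectivity, let alone splitting, of $d_{m-1}$. (Also, for Lemma~\ref{lem:standardcostandardcomposition} you should take $\lambda$ maximal among weights of composition factors of $T_m$, not just of $\head\,T_m$.) What is actually true, and what you need, is the following: the $H^m$-vanishing for \emph{every} $\mu$ says that $\Hom_\mathcal{C}(\Delta_\mu,T_{m-1}) \to \Hom_\mathcal{C}(\Delta_\mu,T_m)$ is surjective for every $\mu$; one can then lift $\id_{T_m}$ along $d_{m-1}$ by climbing a $\Delta$-filtration $0=F_0\subsetneq\cdots\subsetneq F_k=T_m$: at each step the obstruction to extending the already-constructed $\alpha_{i-1}\colon F_{i-1}\to T_{m-1}$ lives in $\Ext^1_\mathcal{C}(\Delta_{\mu_i},T_{m-1})$, which vanishes because $T_{m-1}$ is $\nabla$-filtered, and the correction term needed to make $d_{m-1}\circ\alpha_i$ the inclusion $F_i\hookrightarrow T_m$ comes exactly from the surjectivity of $\Hom_\mathcal{C}(\Delta_{\mu_i},d_{m-1})$. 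This produces a section of $d_{m-1}$ and so an isomorphism of an indecomposable summand of $T_m$ factoring through $d_{m-1}$, contradicting Lemma~\ref{lem:radical}. (Alternatively, one can truncate to a finite ideal, apply $R^\nabla_\mathcal{C}$ to turn $T_\bullet$ into a minimal complex of injectives in the Ringel dual, and there the surjectivity of $\Hom(L'_\mu,-)$ on $d'_{m-1}$ plus essentiality of socles splits $d'_{m-1}$ immediately.) With that step filled in, your proof is a legitimate and self-contained alternative to the paper's argument; without it, the contradiction with minimality is not established.
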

\begin{proof}
	We only show the first equality; the second one can be proven analogously.
	By the definition of $C_\mathrm{min}(M)$, we have $M \cong H^0\big( C_\mathrm{min}(M) \big)$ and there is a short exact sequence
	\[ 0 \longrightarrow \im(d_{-1}) \longrightarrow \ker(d_0) \longrightarrow M \longrightarrow 0 . \]
	We first show that $\ker(d_0)$ has $\nabla$-filtration dimension $r \coloneqq \max\{ i \mid T_i \neq 0 \}$.
	If $r=0$ then $\ker(d_0) = T_0$ is a tilting object and $\dim_\nabla(T_0) = 0$ by Remark \ref{rem:gfdzero}, so now suppose that $r>0$.
	For any $i \geq 0$, there is a short exact sequence
	\[ 0 \longrightarrow \ker(d_i) \longrightarrow T_i \longrightarrow \ker(d_{i+1}) \longrightarrow 0 , \]
	and by Lemma \ref{lem:minimaltiltingcomplexnogap}, we have $\ker(d_{i+1}) = \im(d_i) \neq 0$ for all $i < r$.
	Using part (1) of Lemma \ref{lem:GFDSES}, it follows that
	\[ \dim_\nabla\bigl( \ker(d_i) \bigr) \leq \max\big\{ \dim_\nabla(T_i) , \dim_\nabla\bigl( \ker(d_{i+1}) \bigr) + 1 \big\} = \dim_\nabla\bigl( \ker(d_{i+1}) \bigr) + 1 \]
	for $0 \leq i < r$, with equality if $0 = \dim_\nabla(T_i) \neq \mathrm{dim}_\nabla\bigl( \ker(d_{i+1}) \bigr)$.
	Now the claim will follow by induction once we show that $\dim_\nabla\big( \ker(d_{r-1}) \big) = 1$.
	As $\ker(d_r) = T_r$ is a tilting object of $\mathcal{C}$, we have $\dim_\nabla\bigl( \ker(d_r) \bigr) = 0$ and therefore $\dim_\nabla\bigl( \ker(d_{r-1}) \bigr) \leq 1$, and by applying part (4) of Lemma \ref{lem:GFDSES} to the short exact sequence
	\[ 0 \longrightarrow \ker(d_{r-1}) \longrightarrow T_{r-1} \longrightarrow T_r \longrightarrow 0 , \]
	we see that $\dim_\Delta\bigl( \ker(d_{r-1}) \bigr) = 0$, hence $\ker(d_{r-1})$ admits a $\Delta$-filtration.
	Furthermore, it is straightforward to see that the minimal tilting complex of $\ker(d_{r-1})$ is given by
	\[ C_\mathrm{min}\big( \ker(d_{r-1}) \big) \quad = \quad \big( \quad 0 \longrightarrow T_{r-1} \xrightarrow{d_{r-1}} T_r \longrightarrow 0 \quad \big) , \]
	with the term $T_{r-1}$ in homological degree zero, so $\ker(d_{r-1})$ is not a tilting object by Remark \ref{rem:minimaltiltingcomplexoftilting}.
	We conclude that $\ker(d_{r-1})$ does not admit a $\nabla$-filtration, hence $0 < \dim_\nabla\bigl( \ker(d_{r-1}) \bigr) \leq 1$, and it follows that $\dim_\nabla\bigl( \ker(d_{r-1}) \bigr) = 1$, as required.
	
	If $\im(d_{-1}) = 0$ then $M \cong \ker(d_0)$ and therefore $\dim_\nabla(M) = \dim_\nabla\bigl( \ker(d_0) \bigr) = r$, as claimed, so now suppose that $\im(d_{-1})$ is non-zero.
	By part (3) of Lemma \ref{lem:GFDSES}, we have
	\[ \dim_\nabla(M) \leq \max\big\{ \dim_\nabla\bigl( \im(d_1) \bigr) - 1 , \dim_\nabla\bigl( \ker(d_0) \bigr) \big\} = \max\big\{ \dim_\nabla\bigl( \im(d_{-1}) \bigr) - 1 , r \big\} \]
	with equality if $\dim_\nabla\bigl( \im(d_{-1}) \bigr) \neq r$, so it now suffices to prove that $\dim_\nabla\bigl( \im(d_1) \bigr) = 0$.
	For all $i<0$, we have a short exact sequence
	\[ 0 \longrightarrow \im(d_{i-1}) \longrightarrow T_i \longrightarrow \im(d_i) \longrightarrow 0 , \]
	and we can use part (3) of Lemma \ref{lem:GFDSES} and induction to see that $\dim_\nabla\bigl( \im(d_i) \bigr) = 0$ for all $i<0$ such that $\im(d_i) \neq 0$, as required.
\end{proof}

\begin{Corollary} \label{cor:Deltafiltrationtiltingresolution}
	An object $M$ of $\mathcal{C}$ admits a $\Delta$-filtration if and only if $C_\mathrm{min}(M)_i = 0$ for all $i<0$.
\end{Corollary}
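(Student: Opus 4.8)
The plan is to deduce the statement immediately from Lemma \ref{lem:minimaltiltingcomplexgfdwfd} together with Remark \ref{rem:gfdzero}. First I would dispose of the trivial case $M = 0$: then $C_\mathrm{min}(M)$ is the zero complex, so all of its terms vanish, and the zero object admits the empty $\Delta$-filtration, so both conditions hold vacuously.

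For $M \neq 0$, write $C_\mathrm{min}(M) = (T_\bullet, d_\bullet)$. By Lemma \ref{lem:minimaltiltingcomplexgfdwfd} we have $\dim_\Delta(M) = -\min\{ i \mid T_i \neq 0 \}$, and by Remark \ref{rem:gfdzero} the object $M$ admits a $\Delta$-filtration if and only if $\dim_\Delta(M) = 0$. So it suffices to check that $\min\{ i \mid T_i \neq 0 \} = 0$ is equivalent to $T_i = 0$ for all $i < 0$. The forward implication is clear. For the converse, one uses that $\dim_\Delta(M) \geq 0$ always holds for non-zero $M$ — for instance because $\Hom_\mathcal{C}(M, \nabla_\lambda) \neq 0$ when $\lambda$ is maximal among the highest weights of the composition factors of $M$, by Lemma \ref{lem:standardcostandardcomposition} — so that $\min\{ i \mid T_i \neq 0 \} \leq 0$; combined with the hypothesis that $T_i = 0$ for all $i < 0$, this forces $\min\{ i \mid T_i \neq 0 \} = 0$.

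Since the statement is an essentially formal consequence of results already established, there is no real obstacle. The only point that requires a moment's care is the observation that $\dim_\Delta(M) \geq 0$ for every non-zero $M$, which rules out the degenerate possibility that $T_i = 0$ for all $i \leq 0$ and thereby makes the two conditions genuinely equivalent rather than merely one-directional.
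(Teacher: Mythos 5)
Your proof is correct and follows essentially the same route as the paper: dispose of $M=0$, then combine Remark \ref{rem:gfdzero} with Lemma \ref{lem:minimaltiltingcomplexgfdwfd}. The only addition is that you spell out why $\dim_\Delta(M)\geq 0$ for non-zero $M$, which the paper treats as immediate (it is built into the well-definedness of $\dim_\Delta$, noted just after the definition via Lemmas \ref{lem:Extvanishingsimplecostandard} and \ref{lem:standardcostandardcomposition}).
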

\begin{proof}
	If $M=0$ then $M$ admits a $\Delta$-filtration by definition and $C_\mathrm{min}(M) = 0$.
	Otherwise $M$ admits a $\Delta$-filtration if and only if
	\[ 0 = \dim_\Delta( M ) = - \min\big\{ i \mathrel{\big|} C_\mathrm{min}(M)_i \neq 0 \big\} \]
	by Remark \ref{rem:gfdzero} and Lemma \ref{lem:minimaltiltingcomplexgfdwfd}, from which the claim is immediate.
\end{proof}

A similar result to Lemma \ref{lem:minimaltiltingcomplexgfdwfd} above appears in Lemma 6 of \cite{MazorchukOvsienkoFinistic}.
In the case where $\mathcal{C}$ is the category of finite-dimensional rational representations of a reductive algebraic group, Corollary \ref{cor:Deltafiltrationtiltingresolution} is also an immediate consequence of the main result of \cite{DonkinFiniteResolutions}.

We conclude this section by showing that minimal tilting complexes are well-behaved with respect to distinguished triangles in $D^b(\mathcal{C})$.
Note that for any short exact sequence $0 \to A \to B \to C \to 0$ in $\mathcal{C}$, there is a distinguished triangle $A \to B \to C \to A[1]$ in $D^b(\mathcal{C})$.
Therefore, the lemma can also be used to relate the minimal tilting complexes of any three objects of $\mathcal{C}$ that fit into a short exact sequence.

\begin{Lemma} \label{lem:triangle}
	For any distinguished triangle $X \to Y \to Z \to X[1]$ in $D^b(\mathcal{C})$, we have
	\[ C_\mathrm{min}(Z)_i \overset{\oplus}{\subseteq} C_\mathrm{min}(X)_{i+1} \oplus C_\mathrm{min}(Y)_i \]
	for all $i \in \Z$, and for $\lambda \in \Lambda$, we further have 
	\begin{align*}
	\big[ C_\mathrm{min}(X)_{i+1} : T_\lambda \big]_\oplus & + \big[ C_\mathrm{min}(Y)_i : T_\lambda \big]_\oplus
	\geq \big[ C_\mathrm{min}(Z)_i : T_\lambda \big]_\oplus \\
	& \geq  \big[ C_\mathrm{min}(X)_{i+1} : T_\lambda \big]_\oplus - \big[ C_\mathrm{min}(X)_i : T_\lambda \big]_\oplus - \big[ C_\mathrm{min}(X)_{i+2} : T_\lambda \big]_\oplus \\
	& \hspace{2cm} + \big[ C_\mathrm{min}(Y)_i : T_\lambda \big]_\oplus - \big[ C_\mathrm{min}(Y)_{i-1} : T_\lambda \big]_\oplus - \big[ C_\mathrm{min}(Y)_{i+1} : T_\lambda \big]_\oplus .
	\end{align*}
\end{Lemma}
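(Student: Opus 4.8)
The plan is to realize $Z$ as the mapping cone of a chain map between the minimal tilting complexes of $X$ and $Y$, and then to read off the terms of $C_\mathrm{min}(Z)$ using the fact (Lemma \ref{lem:minimaltiltingcomplex}(2)) that $C_\mathrm{min}(Z)$ is a direct summand of any bounded complex of tilting objects representing $Z$ in $D^b(\mathcal{C})$, together with the multiplicity bounds of Corollary \ref{cor:minimalcomplexdirectsummandbound}.

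First I would transport the morphism $X \to Y$ through the triangulated equivalence $\mathfrak{T}$ of Proposition \ref{prop:tiltinghomotopyderived}: using $\mathfrak{T}(C_\mathrm{min}(X)) \cong X$ and $\mathfrak{T}(C_\mathrm{min}(Y)) \cong Y$ in $D^b(\mathcal{C})$, it corresponds to a morphism $C_\mathrm{min}(X) \to C_\mathrm{min}(Y)$ in $K^b\big(\Tilt(\mathcal{C})\big)$, which I lift to an honest chain map $f \colon C_\mathrm{min}(X) \to C_\mathrm{min}(Y)$ in $C^b\big(\Tilt(\mathcal{C})\big)$. The mapping cone $C \coloneqq \mathrm{cone}(f)$, whose terms are $C_i = C_\mathrm{min}(X)_{i+1} \oplus C_\mathrm{min}(Y)_i$, completes $f$ to a distinguished triangle $C_\mathrm{min}(X) \xrightarrow{f} C_\mathrm{min}(Y) \to C \to C_\mathrm{min}(X)[1]$ in $K^b\big(\Tilt(\mathcal{C})\big)$. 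Applying $\mathfrak{T}$ yields a distinguished triangle in $D^b(\mathcal{C})$ with the same base morphism as the given triangle $X \to Y \to Z \to X[1]$; since the third vertex of a distinguished triangle with prescribed base morphism is unique up to (non-canonical) isomorphism, we conclude $\mathfrak{T}(C) \cong Z$, i.e.\ $C \cong Z$ in $D^b(\mathcal{C})$.

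From here the argument is purely formal. By Lemma \ref{lem:minimaltiltingcomplex}(2), $C_\mathrm{min}(Z)$ is the minimal complex of $C$ and there is a split monomorphism $C_\mathrm{min}(Z) \to C$ in $C^b\big(\Tilt(\mathcal{C})\big)$; in particular $C_\mathrm{min}(Z)_i \overset{\oplus}{\subseteq} C_i = C_\mathrm{min}(X)_{i+1} \oplus C_\mathrm{min}(Y)_i$, which is the displayed inclusion and also gives the upper bound for $[C_\mathrm{min}(Z)_i : T_\lambda]_\oplus$. For the lower bound, I would apply Corollary \ref{cor:minimalcomplexdirectsummandbound} to the bounded complex $C$ over $\Tilt(\mathcal{C})$ and to the indecomposable object $T_\lambda$, obtaining
\[ [C_i : T_\lambda]_\oplus \geq [C_\mathrm{min}(Z)_i : T_\lambda]_\oplus \geq [C_i : T_\lambda]_\oplus - [C_{i-1} : T_\lambda]_\oplus - [C_{i+1} : T_\lambda]_\oplus . \]
Substituting $[C_j : T_\lambda]_\oplus = [C_\mathrm{min}(X)_{j+1} : T_\lambda]_\oplus + [C_\mathrm{min}(Y)_j : T_\lambda]_\oplus$ for $j = i-1, i, i+1$ and regrouping the terms coming from $X$ and from $Y$ reproduces exactly the asserted chain of inequalities.

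The one point requiring care is the identification $\mathrm{cone}(f) \cong Z$ in $D^b(\mathcal{C})$: one must verify that $f$ genuinely represents the transported morphism $X \to Y$ and then invoke the uniqueness (up to non-canonical isomorphism) of the cone of a morphism in a triangulated category, using that $\mathfrak{T}$ is triangulated. This is standard homological algebra and presents no real difficulty; everything else is bookkeeping of Krull--Schmidt multiplicities governed by Corollary \ref{cor:minimalcomplexdirectsummandbound}.
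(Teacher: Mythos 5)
Your proposal is correct and takes essentially the same route as the paper: both realize $C_\mathrm{min}(Z)$ as a direct summand of the mapping cone $C = \mathrm{cone}(f)$ of a chain map $f \colon C_\mathrm{min}(X) \to C_\mathrm{min}(Y)$ obtained by transporting the triangle through the equivalence $\mathfrak{T}$, and then read off the multiplicity bounds from Corollary~\ref{cor:minimalcomplexdirectsummandbound}. Your version spells out the uniqueness-of-cone argument slightly more explicitly, but the substance is identical.
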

\begin{proof}
	By the definition of minimal tilting complexes, we have $X \cong C_\mathrm{min}(X)$ in $D^b(\mathcal{C})$, and similarly for $Y$ and $Z$, whence there is a distinguished triangle
	\[ C_\mathrm{min}(X) \to C_\mathrm{min}(Y) \to C_\mathrm{min}(Z) \to C_\mathrm{min}(X)[1] \]
	in the derived category $D^b(\mathcal{C})$.
	This implies that $C_\mathrm{min}(Z)$ is isomorphic in $D^b( \mathcal{C} )$ to the mapping cone $C = \mathrm{cone}(f)$ of a homomorphism $f \colon C_\mathrm{min}(X) \to C_\mathrm{min}(Y)$, and by Lemma \ref{lem:minimalcomplex}, it follows that $C_\mathrm{min}(Z)$ is the minimal complex of $C$ and that there is a split monomorphism $C_\mathrm{min}(Z) \to C$ in the category of complexes $C^b\big( \Tilt(\mathcal{C}) \big)$.
	Since the term of $C$ in degree $i \in \Z$ is given by $C_i = C_\mathrm{min}(X)_{i+1} \oplus C_\mathrm{min}(Y)_i$, we conclude that
	\[ C_\mathrm{min}(Z)_i \overset{\oplus}{\subseteq} C_i = C_\mathrm{min}(X)_{i+1} \oplus C_\mathrm{min}(Y)_i , \]
	and Corollary \ref{cor:minimalcomplexdirectsummandbound} further yields
	\[ \big[ C_i : T_\lambda \big]_\oplus \geq \big[ C_\mathrm{min}(Z)_i : T_\lambda \big]_\oplus \geq \big[ C_i : T_\lambda \big]_\oplus - \big[ C_{i-1} : T_\lambda \big]_\oplus - \big[ C_{i+1} : T_\lambda \big]_\oplus \]
	for all $\lambda \in \Lambda$, from which the inequality in the lemma is immediate.
\end{proof}

\section{Standard objects and simple objects} \label{sec:standardsimple}

Let us keep the notation and assumptions from the preceding section; in particular $\mathcal{C}$ is a highest weight category with weight poset $(\Lambda,\leq)$.
In this section, we study the minimal tilting complexes of standard objects and simple objects in $\mathcal{C}$, starting with the standard objects.
The following result has already been observed by V.\ Mazorchuk and S.\ Ovsienko, albeit without giving a detailed proof;
see the remark after Corollary 3 in \cite{MazorchukOvsienkoPairing}.
Recall that for a finite order ideal $\Gamma \subseteq \Lambda$, we write $\mathcal{C}_\Gamma^\prime$ for the Ringel dual of the truncated subcategory $\mathcal{C}_\Gamma$ of $\mathcal{C}$.

\begin{Lemma} \label{lem:minimaltiltingcomplexstandard}
	Let $\lambda \in \Lambda$ and write $C_\mathrm{min}(\Delta_\lambda) = (T_\bullet,d_\bullet)$.
	For any $i \in \Z$ and $\mu \in \Lambda$ and for any finite order ideal $\Gamma \subseteq \Lambda$ with $\lambda \in \Gamma$, we have
	\[ [ T_i : T_\mu ]_\oplus = \begin{cases} \dim \Ext_{\mathcal{C}_\Gamma^\prime}^i(L_\mu^\prime,\nabla_\lambda^\prime) & \text{if } i \geq 0 \text{ and } \mu \leq \lambda , \\ 0 & \text{otherwise} . \end{cases} \]
\end{Lemma}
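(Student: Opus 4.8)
The strategy is to pass to the truncated category $\mathcal{C}_\Gamma$, use Ringel duality to translate the problem into a statement about projective resolutions in $\mathcal{C}_\Gamma^\prime$, and then read off the terms of the minimal tilting complex. First I would observe that by Lemma~\ref{lem:minimaltiltingcomplexorderideal} (applied via Proposition~\ref{prop:derivedtruncated}), since $\Delta_\lambda$ is an object of $\mathcal{C}_\Gamma$, its minimal tilting complex in $\mathcal{C}$ agrees with the minimal tilting complex computed inside $\mathcal{C}_\Gamma$, with the same indecomposable tilting summands $T_\mu$ for $\mu \in \Gamma$; moreover $T_\mu$ for $\mu \notin \Gamma$ never appears. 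So we may and do assume $\Lambda = \Gamma$ is finite, and write $\mathcal{C}^\prime$ for the Ringel dual.

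Next I would transport the complex through the Ringel duality functor $R_\mathcal{C}^\nabla = \Hom_\mathcal{C}(-,T)^*$. The key input is Theorem~\ref{thm:Ringeldual}: $R_\mathcal{C}^\nabla$ restricts to an equivalence $\mathcal{C}_\Delta \to \mathcal{C}_\nabla^\prime$ taking acyclic complexes to acyclic complexes, with $R_\mathcal{C}^\nabla(T_\mu) = I_\mu^\prime$ and $R_\mathcal{C}^\nabla(\Delta_\lambda) = \nabla_\lambda^\prime$. Since every term of $C_\mathrm{min}(\Delta_\lambda)$ is a tilting object (hence lies in $\mathcal{C}_\Delta$) and the complex is quasi-isomorphic to $\Delta_\lambda$, applying $R_\mathcal{C}^\nabla$ termwise yields a bounded complex of injective objects in $\mathcal{C}^\prime$ that is a resolution of $\nabla_\lambda^\prime$ — in fact an injective resolution, once one checks it is bounded below by zero. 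To see the latter: $\Delta_\lambda$ admits a $\Delta$-filtration, so $\dim_\Delta(\Delta_\lambda) = 0$, and by Lemma~\ref{lem:minimaltiltingcomplexgfdwfd} the terms $T_i$ vanish for $i < 0$; this gives the ``$i \geq 0$'' clause and also shows the resolution $R_\mathcal{C}^\nabla\big(C_\mathrm{min}(\Delta_\lambda)\big)$ sits in non-negative degrees.

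Then I would argue that this injective resolution is \emph{minimal}. Because $R_\mathcal{C}^\nabla$ is an equivalence on the relevant subcategories, it carries $\rad_{\Tilt(\mathcal{C})}$ into $\rad$ of the category of injectives in $\mathcal{C}^\prime$ (radical morphisms are characterized in Lemma~\ref{lem:radical} by the non-factorization of isomorphisms between indecomposables, a property preserved by any additive equivalence), so the differentials of the image complex are radical, i.e.\ the resolution is minimal. For a minimal injective resolution $0 \to \nabla_\lambda^\prime \to I^0 \to I^1 \to \cdots$, the multiplicity of the indecomposable injective $I_\mu^\prime$ in $I^i$ equals $\dim \Ext_{\mathcal{C}^\prime}^i(L_\mu^\prime, \nabla_\lambda^\prime)$ — this is the standard fact that $[I^i : I_\mu^\prime]_\oplus = \dim \Hom_{\mathcal{C}^\prime}(L_\mu^\prime, I^i) - (\text{image of the differential contribution})$, which for a minimal resolution reduces cleanly to the $\Ext$ dimension. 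Combining with $[T_i : T_\mu]_\oplus = [R_\mathcal{C}^\nabla(T_i) : I_\mu^\prime]_\oplus = [I^i : I_\mu^\prime]_\oplus$ gives the displayed formula; finally, $\Ext_{\mathcal{C}^\prime}^i(L_\mu^\prime, \nabla_\lambda^\prime) = 0$ unless $\mu \leq^\mathrm{op} \lambda$ fails, i.e.\ one uses Lemma~\ref{lem:Extvanishingsimplecostandard} in $\mathcal{C}^\prime$ (whose order is $\leq^\mathrm{op}$) to get the constraint $\mu \leq \lambda$ in the original poset, matching the ``$\mu \leq \lambda$'' clause.

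\textbf{Main obstacle.} The delicate point is the minimality claim for the image resolution: one must be careful that $R_\mathcal{C}^\nabla$, while an equivalence $\mathcal{C}_\Delta \to \mathcal{C}_\nabla^\prime$, interacts correctly with the ambient additive categories in which ``radical'' is measured — specifically that a morphism of tilting objects is radical in $\Tilt(\mathcal{C})$ if and only if its image is radical in the category of $\nabla$-filtered injectives of $\mathcal{C}^\prime$, and that minimality of a complex of injectives in this subcategory coincides with the usual minimality of an injective resolution in $\mathcal{C}^\prime$. This is where invoking Lemma~\ref{lem:radical}'s intrinsic characterization of the radical (non-factorization of isomorphisms between indecomposables) pays off, since it is manifestly preserved by any fully faithful additive functor. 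The rest is bookkeeping with the equivalence of Proposition~\ref{prop:tiltinghomotopyderived} and the dictionary of Theorem~\ref{thm:Ringeldual}.
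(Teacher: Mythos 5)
Your proposal is correct and follows essentially the same route as the paper's proof: reduce to a finite weight poset via Lemma~\ref{lem:minimaltiltingcomplexorderideal}, observe $T_i=0$ for $i<0$ (the paper cites Corollary~\ref{cor:Deltafiltrationtiltingresolution}, you cite Lemma~\ref{lem:minimaltiltingcomplexgfdwfd}, which amount to the same thing), apply $R_\mathcal{C}^\nabla$ to get a minimal injective coresolution of $\nabla_\lambda^\prime$, and read off the multiplicities as $\Ext$-dimensions. The extra care you take in justifying minimality preservation via the intrinsic characterization of the radical is a nice touch — the paper simply asserts it — and the only blemish is the garbled phrasing ``unless $\mu\leq^\mathrm{op}\lambda$ fails'' (the correct condition from Lemma~\ref{lem:Extvanishingsimplecostandard} in $\mathcal{C}_\Gamma^\prime$ is $\lambda\leq^\mathrm{op}\mu$, i.e.\ $\mu\leq\lambda$), though you land on the right conclusion.
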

\begin{proof}
	Recall from Lemma \ref{lem:minimaltiltingcomplexorderideal} that all terms of $C_\mathrm{min}(\Delta_\lambda)$ belong to $\mathcal{C}_\Gamma$, so we may assume that $\Lambda = \Gamma$ is finite.
	(This argument also shows that $[ T_i : T_\mu ]_\oplus = 0$ unless $\mu \leq \lambda$, by setting $\Gamma=\{ \nu \in \Lambda \mid \nu \leq \lambda \}$.)
	By Corollary \ref{cor:Deltafiltrationtiltingresolution}, we have $C_\mathrm{min}(\Delta_\lambda)_i = 0$ for all $i<0$, whence $C_\mathrm{min}(\Delta_\lambda)$ affords a minimal tilting coresolution
	\[ 0 \to \Delta_\lambda \to T_0 \to T_1 \to T_2 \to \cdots \]
	of $\Delta_\lambda$ in $\mathcal{C}$.
	By applying the Ringel duality functor $R_\mathcal{C}^\nabla$, we obtain a minimal injective coresolution
	\begin{equation} \label{eq:minimalinjcores}
	0 \to \nabla_\lambda^\prime \xrightarrow{ \, ~ \, } I_0 \xrightarrow{ \, d_0^\prime \, } I_1 \xrightarrow{ \, d_1^\prime \, } I_2 \xrightarrow{ \, d_2^\prime \, } \cdots
	\end{equation}
	of $\nabla_\lambda^\prime$ in the Ringel dual $\mathcal{C}^\prime$ with $[T_i:T_\mu]_\oplus = [I_i:I_\mu^\prime]_\oplus = \dim \Hom_{\mathcal{C}^\prime}(L^\prime_\mu,I_i)$ by Theorem \ref{thm:Ringeldual}.
	Now the $\Ext$-group $\Ext_{\mathcal{C}^\prime}^i(L^\prime_\mu,\nabla^\prime_\lambda)$ is isomorphic to the $i$-th cohomology group of the complex
	\[ \Hom_{\mathcal{C}^\prime}( L^\prime_\mu , I_0 ) \to \Hom_{\mathcal{C}^\prime}( L^\prime_\mu , I_1 ) \to \Hom_{\mathcal{C}^\prime}( L^\prime_\mu , I_2 ) \to \cdots . \]
	All differentials of the latter complex are zero by minimality of the injective coresolution \eqref{eq:minimalinjcores}, cf.\ the proof of Corollary 2.5.4 in \cite{Benson}, and we obtain 
	\[ \dim \Ext_{\mathcal{C}^\prime}^i( L^\prime_\mu , \nabla^\prime_\lambda ) = \dim \Hom_{\mathcal{C}^\prime}( L^\prime_\mu , I_i ) = [ I_i : I^\prime_\mu ]_\oplus = [ T_i : T_\mu ]_\oplus , \]
	as required.
\end{proof}

Now we start our investigation of minimal tilting complexes of simple objects in $\mathcal{C}$.
In order to prove our main result, we will need the following technical proposition,
where for objects $A$ and $B$ of $\mathcal{C}$, we adopt the notation
\[ \dim \Ext_\mathcal{C}^\bullet(A,B) = \sum_{i \geq 0} \dim \Ext_\mathcal{C}^i(A,B) \cdot v^i \in \Z[v] \]
and for bounded complexes $X$ and $Y$ in $\mathcal{C}$, we write
\[ \dim \Hom_{D^b(\mathcal{C})}^\bullet(X,Y) = \sum_{i \in \Z} \dim \Hom_{D^b(\mathcal{C})}( X , Y[i] ) \cdot v^i \in \Z[v^{\pm 1}] . \]

\begin{Proposition} \label{prop:trianglestandardresolution}
	Let $X$ be an object of $\mathcal{C}$.
	Then there exist a sequence $n_0 , \ldots , n_r$ of non-negative integers, a sequence $\lambda_1, \ldots, \lambda_r$ of weights and a sequence $X_0 , \ldots , X_{r+1}$ of bounded complexes in $\mathcal{C}$, with $X_0 \cong X$ in $D^b(\mathcal{C})$ and $X_{r+1} = 0$,
	such that there are distinguished triangles
	\[ \Delta_{\lambda_i}[n_i] \longrightarrow X_i \longrightarrow X_{i+1} \longrightarrow \Delta_{\lambda_i}[n_i+1] \]
	for $i=0,\ldots,r$ and we have
	\[ \dim \Ext_\mathcal{C}^\bullet( X , \nabla_\lambda ) = \sum_{i=0}^r \delta_{\lambda,\lambda_i} \cdot v^{n_i} \]
	for all $\lambda \in \Lambda$.
\end{Proposition}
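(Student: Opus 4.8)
The plan is to build the complexes $X_i$ by induction, peeling off one standard object at a time, working downward through the weight poset. First I would set $X_0$ to be any bounded complex of objects of $\mathcal{C}$ representing $X$ in $D^b(\mathcal{C})$ — say $X$ itself, or better, a complex whose terms are objects admitting $\Delta$-filtrations (e.g.\ a projective resolution, truncated), although this is not strictly necessary. The key point is that since $X$ has bounded cohomology and $\mathcal{C}$ has finite length, only finitely many weights $\mu$ can occur as highest weights of composition factors of the cohomology objects $H^j(X)$, for finitely many $j$. I would truncate to a finite order ideal $\Gamma$ containing all these weights (using Proposition~\ref{prop:derivedtruncated} to stay inside $D^b(\mathcal{C}_\Gamma)$), so that we may work in a highest weight category with finite weight poset.

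The heart of the argument is the inductive step. Suppose $X_i$ is a non-zero bounded complex in $\mathcal{C}$ (viewed in $D^b(\mathcal{C})$). Choose a pair $(n_i,\lambda_i)$ as follows: let $n_i$ be minimal such that $\Ext_\mathcal{C}^{n_i}(X_i,\nabla_\mu)\neq 0$ for some $\mu$ — equivalently, by shifting, the smallest degree in which $X_i$ ``sees'' a costandard object — and among all such $\mu$ let $\lambda_i$ be maximal in $\Lambda$. I expect that the minimality of $n_i$, combined with the $\Ext$-vanishing~\eqref{eq:extvanishing} between standards and costandards and Lemma~\ref{lem:standardcostandardcomposition}, yields a non-zero map $\Delta_{\lambda_i}[n_i]\to X_i$ in $D^b(\mathcal{C})$: one should identify a minimal-degree cohomology object of $X_i$ with a relevant composition factor and use that $\Delta_{\lambda_i}$ is projective (after truncation) onto the top of that factor. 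Completing this map to a distinguished triangle $\Delta_{\lambda_i}[n_i]\to X_i\to X_{i+1}\to \Delta_{\lambda_i}[n_i+1]$ defines $X_{i+1}$, which is again a bounded complex in $\mathcal{C}$ (replace it by a complex of objects of $\mathcal{C}$, using $\mathfrak{T}$ or a standard truncation argument; alternatively one can run the whole induction with tilting complexes via Proposition~\ref{prop:tiltinghomotopyderived}). Applying $\Hom_{D^b(\mathcal{C})}(-,\nabla_\lambda[\bullet])$ to this triangle gives a long exact sequence; using~\eqref{eq:extvanishing} one checks that $\dim\Ext_\mathcal{C}^\bullet(X_i,\nabla_\lambda)=\delta_{\lambda,\lambda_i}v^{n_i}+\dim\Ext_\mathcal{C}^\bullet(X_{i+1},\nabla_\lambda)$ for every $\lambda$. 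Summing the telescoping identity over $i$ and using that the process terminates yields the displayed formula.

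Termination is the next point to nail down. At each step, either $n_{i+1}>n_i$, or $n_{i+1}=n_i$ and $\lambda_{i+1}<\lambda_i$ in $\Lambda$ (the chosen weight strictly decreases among those realizing the minimal degree, by maximality of $\lambda_i$ and the long exact sequence, since subtracting a copy of $\Delta_{\lambda_i}$ kills exactly the $\lambda_i$-contribution in degree $n_i$ and can only create contributions in degree $n_i+1$). Since $\Lambda$ has been truncated to a finite poset and the degrees $n_i$ are bounded above by $\dim_\Delta$ of the relevant objects (Lemma~\ref{lem:minimaltiltingcomplexgfdwfd} or the total $\Ext$-vanishing), the sequence of pairs $(n_i,\lambda_i)$ strictly increases in a well-founded order and so the process stops at some finite $r$ with $\Ext_\mathcal{C}^\bullet(X_{r+1},\nabla_\lambda)=0$ for all $\lambda$; by Donkin's criterion (Proposition~\ref{prop:Donkincohomological}) and its analogue, together with the fact that $X_{r+1}$ has cohomology only in degree zero (which one arranges, or argues, so that it is an honest object), this forces $X_{r+1}=0$.

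The main obstacle, as I see it, is the construction of the non-zero morphism $\Delta_{\lambda_i}[n_i]\to X_i$ in the derived category and the verification that the cone $X_{i+1}$ is again representable by a bounded complex in $\mathcal{C}$ while controlling its $\Ext$'s into costandards. For the first part, one needs to pass from the abstract non-vanishing of $\Ext_\mathcal{C}^{n_i}(X_i,\nabla_{\lambda_i})$ to an explicit map \emph{out of} a shifted standard object; the cleanest route is probably to use the minimal tilting complex $C_\mathrm{min}(X_i)$, locate the lowest-degree term, and use that a tilting object surjects onto $\Delta$-filtered pieces — but one must be careful that $\Ext^{n_i}(X_i,\nabla_\mu)=0$ for $\mu\nleq\lambda_i$ and $\mu$ realizing degree $n_i$, which is where the maximality of $\lambda_i$ and Lemma~\ref{lem:Extvanishingsimplecostandard}-type arguments enter. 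Everything else is bookkeeping with long exact sequences and the $\Ext$-orthogonality~\eqref{eq:extvanishing}, which makes all the cross-terms vanish and leaves the clean additive formula.
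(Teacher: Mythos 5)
Your overall strategy — inductively coning off shifted standard objects and telescoping the resulting long exact sequences, with everything made finite by truncating to a finite order ideal via Proposition~\ref{prop:derivedtruncated} — is the same as the paper's. However, the pivot you choose at each step has a genuine gap, one you yourself flag as the ``main obstacle'' without closing it.

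You propose to take $n_i$ minimal with $\Ext_\mathcal{C}^{n_i}(X_i,\nabla_\mu)\neq 0$ for some $\mu$, and then $\lambda_i$ maximal among such $\mu$. This choice does \emph{not} guarantee that $\lambda_i$ is maximal in the order ideal $\Gamma$ generated by the highest weights of the composition factors of $\bigoplus_j H^j(X_i)$, and that maximality is exactly what is needed: it is what makes $\Delta_{\lambda_i}$ projective and $\nabla_{\lambda_i}$ injective in $\mathcal{C}_\Gamma$, which is how one gets
\[ \Hom_{D^b(\mathcal{C})}\bigl(\Delta_{\lambda_i}[n_i],X_i\bigr)\cong\Hom_\mathcal{C}\bigl(\Delta_{\lambda_i},H^{-n_i}(X_i)\bigr) \]
and a companion isomorphism for $\nabla_{\lambda_i}$, so that Lemma~\ref{lem:standardcostandardcomposition} produces $f$ and $g$ with $g\circ f\neq 0$. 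Concretely, the weight realizing the minimal $\Ext$-degree can easily fail to be $\Gamma$-maximal: in a complex with cohomology spread over several degrees, a small weight can contribute to $\Ext^0$ while the $\Gamma$-maximal weight only shows up in higher $\Ext$-degree. In that case $\Delta_{\lambda_i}$ is not projective after truncation, the lift need not exist, and the surjectivity of $\Hom_{D^b(\mathcal{C})}(X_i[n_i],\nabla_{\lambda_i})\to\Hom_{D^b(\mathcal{C})}(\Delta_{\lambda_i},\nabla_{\lambda_i})$ that you implicitly need for the telescoping formula is not available. The paper sidesteps all of this by choosing $\lambda_i$ \emph{first}, as a maximal element of $\Gamma$, and \emph{then} reading off $m$ from where $L_{\lambda_i}$ sits in cohomology, setting $n_i=-m$; minimality of $n_i$ plays no role.

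Your termination argument is also slightly off: you assert that at each step either $n_{i+1}>n_i$ or $\lambda_{i+1}<\lambda_i$, but if $\dim\Ext^{n_i}_\mathcal{C}(X_i,\nabla_{\lambda_i})>1$ the same pair $(n_i,\lambda_i)$ can recur. The paper instead observes that the long exact sequence in cohomology strictly decreases the total composition multiplicity $[\bigoplus_j H^j(X_i):L_{\lambda_i}]$, which gives termination cleanly (after truncation to finite $\Gamma$). To repair your write-up, switch the pivot to ``$\lambda_i$ maximal in $\Gamma$, then $m$ with $[H^m(X_i):L_{\lambda_i}]\neq 0$'' and replace the termination measure by the total composition multiplicity; the rest of your bookkeeping with the long exact sequences and $\Ext$-orthogonality then goes through.
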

\begin{proof}
	We construct the three sequences inductively, in such a way that
	\[ \dim \Hom_{D^b(\mathcal{C})}^\bullet\big( X_i , \nabla_\lambda \big) = \dim \Hom_{D^b(\mathcal{C})}^\bullet\big( X_{i+1} , \nabla_\lambda \big) + \delta_{\lambda,\lambda_i} \cdot v^{n_i} \]
	for all $\lambda \in \Lambda$ and $i=0,\ldots,r$.
	Suppose that the object $X_i$ has already been constructed.
	If $H^\bullet(X_i) = 0$ then $X_i \cong 0$ in $D^b(\mathcal{C})$, and the claim follows with $r+1 = i$ since
	\[ \dim \Ext_\mathcal{C}^\bullet( X , \nabla_\lambda ) = \dim \Hom_{D^b(\mathcal{C})}^\bullet( X_0 , \nabla_\lambda )
	= \dim \Hom_{D^b(\mathcal{C})}^\bullet( X_{r+1} , \nabla_\lambda ) + \sum_{i=0}^r \delta_{\lambda,\lambda_i} \cdot v^{n_i} = \sum_{i=0}^r \delta_{\lambda,\lambda_i} \cdot v^{n_i} \]
	for all $\lambda \in \Lambda$, as required.
	Now suppose that $H^\bullet(X_i) \neq 0$ and denote by $\Gamma \subseteq \Lambda$ the smallest order ideal containing the highest weights of all composition factors of $\bigoplus_{j \in \Z} H^j(X_i)$.
	By Proposition \ref{prop:derivedtruncated}, the canonical functor $\mathcal{C}_\Gamma \to \mathcal{C}$ induces an equivalence between $D^b(\mathcal{C}_\Gamma)$ and the full subcategory $D^b_{\mathcal{C}_\Gamma}(\mathcal{C})$  of $D^b(\mathcal{C})$ whose objects are the complexes with cohomology in $\mathcal{C}_\Gamma$; hence we may assume that $X_i$ is represented by a complex with terms in $\mathcal{C}_\Gamma$.
	For a maximal element $\lambda \in \Gamma$, the simple object $L_\lambda$ is a composition factor of $H^m(X_i)$ for some $m \in \Z$, the standard object $\Delta_\lambda$ is projective in $\mathcal{C}_\Gamma$ and the costandard object $\nabla_\lambda$ is injective in $\mathcal{C}_\Gamma$.
	In particular, the degree zero cohomology functor $H^0(-)$ induces isomorphisms
	\[ \Hom_{D^b(\mathcal{C})}\big( \Delta_\lambda[-m] , X_i \big) \cong \Hom_\mathcal{C}\big( \Delta_\lambda , H^m(X_i) \big) \]
	and
	\[ \Hom_{D^b(\mathcal{C})}\big( X_i , \nabla_\lambda[-m] \big) \cong \Hom_\mathcal{C}\big( H^m(X_i) , \nabla_\lambda \big) . \]
	Again by the maximality of $\lambda$, there are homomorphisms $f \colon \Delta_\lambda \to H^m(X_i)$ and $g \colon H^m(X_i) \to \nabla_\lambda$ such that $g \circ f$ is non-zero (see Lemma \ref{lem:standardcostandardcomposition}), and we write $\hat{f}$ and $\hat{g}$ for their preimages under the above isomorphisms.
	We define $X_{i+1}$ as the cone of the homomorphism $\hat{f} \colon \Delta_\lambda[-m] \to X_i$, so that there is a distinguished triangle
	\begin{equation} \label{eq:triangleDeltalambdaXiXi+1}
	\Delta_\lambda[-m] \xrightarrow{~\hat{f}~} X_i \longrightarrow X_{i+1} \longrightarrow \Delta_\lambda[-m+1] .
	\end{equation}
	For all $\mu \in \Lambda$, this distinguished triangle gives rise to an exact sequence
	\[
	\cdots \to \Hom_{D^b(\mathcal{C})}\big( X_{i+1}[j] , \nabla_\mu \big) \to \Hom_{D^b(\mathcal{C})}\big( X_i[j] , \nabla_\mu \big) \to \Hom_{D^b(\mathcal{C})}\big( \Delta_\lambda[j-m] ,\nabla_\mu \big) \to \cdots , 
	\]
	and using the $\Ext$-vanishing property \eqref{eq:extvanishing}, it is straightforward to see that our choice of $\hat{f}$ implies that
	\[ \dim \Hom_{D^b(\mathcal{C})}^\bullet\big( X_i , \nabla_\mu \big) = \dim \Hom_{D^b(\mathcal{C})}^\bullet\big( X_{i+1} , \nabla_\mu \big) + \delta_{\lambda,\mu} \cdot v^{-m} . \]
	Thus, the claim is satisfied with $n_i = -m$ and $\lambda_i = \lambda$.
	
	By applying the functor $H^0(-)$ to the distinguished triangle \eqref{eq:triangleDeltalambdaXiXi+1}, we obtain an exact sequence
	\[ 0 \longrightarrow H^{m-1}( X_i ) \longrightarrow H^{m-1}(X_{i+1}) \longrightarrow \Delta_\lambda \xrightarrow{~f~} H^m( X_i ) \longrightarrow H^m( X_{i+1} )  \longrightarrow 0 \]
	and isomorphisms $H^j( X_i ) \cong H^j( X_{i+1} )$ for $j \notin \{ m-1 , m \}$.
	As $f$ is non-zero and $L_\lambda$ is the unique simple quotient of $\Delta_\lambda$, we conclude that the composition multiplicity of $L_\lambda$ in $\bigoplus_{j \in \Z} H^j( X_{i+1} )$ is strictly smaller than the composition multiplicity of $L_\lambda$ in $\bigoplus_{j \in \Z} H^j( X_i )$; hence by induction, we eventually arrive at a complex $X_{r+1}$ that is isomorphic to zero in $D^b(\mathcal{C})$.
\end{proof}

Although we were not able to find an explicit formula for the multiplicities of indecomposable tilting objects in the terms of minimal tilting complexes of simple objects in $\mathcal{C}$, we can establish an upper bound for these multiplicities as follows:

\begin{Proposition} \label{prop:minimaltiltingcomplexsimple}
	For $\lambda \in \Lambda$, there exists a bounded complex $C_\lambda = (T_\bullet,d_\bullet)$ of tilting objects in $\mathcal{C}$ with 
	\[ L_\lambda \cong C_\lambda \qquad \text{in } D^b(\mathcal{C}) \]
	and such that the terms of $C_\lambda$ satisfy $[ T_i : T_\mu ]_\oplus = 0$ for all $i \in \Z$ and $\mu \in \Lambda$ with $\mu \nleq \lambda$ and
	\[ [ T_i : T_\mu ]_\oplus = \sum_{\nu \leq \lambda} ~ \sum_{k-j=i} \dim \Ext_\mathcal{C}^j(L_\lambda,\nabla_\nu) \cdot \dim \Ext_{\mathcal{C}_\Gamma^\prime}^k(L_\mu^\prime,\nabla_\nu^\prime) \]
	for all $i \in \Z$ and $\mu \in \Lambda$ with $\mu \leq \lambda$ and for any finite order ideal $\Gamma \subseteq \Lambda$ with $\lambda \in \Gamma$.
	Furthermore, the minimal tilting complex $C_\mathrm{min}(L_\lambda)$ is a direct summand of $C_\lambda$ in the category of complexes in $\Tilt(\mathcal{C})$.
\end{Proposition}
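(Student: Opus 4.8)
The plan is to build $C_\lambda$ by iterated mapping cones, starting from the distinguished triangles provided by Proposition \ref{prop:trianglestandardresolution} and feeding in the minimal tilting complexes of standard objects controlled by Lemma \ref{lem:minimaltiltingcomplexstandard}.

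First I would apply Proposition \ref{prop:trianglestandardresolution} to $X = L_\lambda$, obtaining non-negative integers $n_0,\ldots,n_r$, weights $\lambda_0,\ldots,\lambda_r$ and bounded complexes $X_0,\ldots,X_{r+1}$ with $X_0 \cong L_\lambda$ in $D^b(\mathcal{C})$, $X_{r+1}=0$, distinguished triangles $\Delta_{\lambda_i}[n_i] \to X_i \to X_{i+1} \to \Delta_{\lambda_i}[n_i+1]$ for $i=0,\ldots,r$, and $\dim\Ext_\mathcal{C}^\bullet(L_\lambda,\nabla_\nu) = \sum_{i=0}^r \delta_{\nu,\lambda_i}\cdot v^{n_i}$ for all $\nu \in \Lambda$. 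Comparing coefficients, for each $\nu \in \Lambda$ and $j \geq 0$ there are exactly $\dim\Ext_\mathcal{C}^j(L_\lambda,\nabla_\nu)$ indices $i$ with $(\lambda_i,n_i) = (\nu,j)$; moreover $\lambda_i \leq \lambda$ for all $i$ by Lemma \ref{lem:Extvanishingsimplecostandard}.

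Next I would construct bounded complexes $C^{(r+1)},C^{(r)},\ldots,C^{(0)}$ of tilting objects by downward induction, with $C^{(r+1)} = 0$, such that $C^{(i)} \cong X_i$ in $D^b(\mathcal{C})$ and $(C^{(i)})_j = \bigoplus_{l=i}^{r} C_\mathrm{min}(\Delta_{\lambda_l})_{j+n_l}$ for all $j \in \Z$. For the inductive step I would transport the connecting morphism $X_{i+1} \to \Delta_{\lambda_i}[n_i+1]$ along the isomorphisms $C^{(i+1)} \cong X_{i+1}$ and $C_\mathrm{min}(\Delta_{\lambda_i}) \cong \Delta_{\lambda_i}$ in $D^b(\mathcal{C})$ and along the equivalence $\mathfrak{T} \colon K^b\big(\Tilt(\mathcal{C})\big) \to D^b(\mathcal{C})$ of Proposition \ref{prop:tiltinghomotopyderived} to a genuine chain map $\delta_i \colon C^{(i+1)} \to C_\mathrm{min}(\Delta_{\lambda_i})[n_i+1]$, and then set $C^{(i)} = \mathrm{cone}(\delta_i[-1])$. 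Since $\mathfrak{T}$ is triangulated, $\mathfrak{T}(C^{(i)})$ fits into a distinguished triangle that agrees with the original one on its outer two vertices compatibly with connecting maps, so the standard fact that a morphism of triangles which is an isomorphism on two vertices is an isomorphism on the third gives $C^{(i)} \cong X_i$ in $D^b(\mathcal{C})$; the term formula follows by induction from $(C^{(i)})_j = (C^{(i+1)})_j \oplus C_\mathrm{min}(\Delta_{\lambda_i})_{j+n_i}$. I would then set $C_\lambda = C^{(0)}$, so that $C_\lambda \cong L_\lambda$ in $D^b(\mathcal{C})$.

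It remains to unwind $[(C_\lambda)_i : T_\mu]_\oplus = \sum_{l=0}^{r} [C_\mathrm{min}(\Delta_{\lambda_l})_{i+n_l} : T_\mu]_\oplus$: grouping the indices $l$ by the pair $(\lambda_l,n_l)$ as above turns this into $\sum_{\nu} \sum_{j\geq 0} \dim\Ext_\mathcal{C}^j(L_\lambda,\nabla_\nu)\cdot [C_\mathrm{min}(\Delta_\nu)_{i+j} : T_\mu]_\oplus$, and Lemma \ref{lem:minimaltiltingcomplexstandard} identifies $[C_\mathrm{min}(\Delta_\nu)_k : T_\mu]_\oplus$ with $\dim\Ext_{\mathcal{C}_\Gamma^\prime}^k(L_\mu^\prime,\nabla_\nu^\prime)$ when $k\geq 0$ and $\mu\leq\nu$ and with $0$ otherwise; substituting $k = i+j$ yields exactly the asserted formula, the constraints $j,k\geq 0$ being automatic since Ext vanishes in negative degrees. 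Each nonzero summand forces $\mu \leq \nu \leq \lambda$, so $[(C_\lambda)_i : T_\mu]_\oplus = 0$ whenever $\mu \nleq \lambda$. Finally, since $C_\lambda \cong L_\lambda$ in $D^b(\mathcal{C})$, Lemma \ref{lem:minimaltiltingcomplex} shows that $C_\mathrm{min}(L_\lambda)$ is the minimal complex of $C_\lambda$, and hence (by the last assertion of Lemma \ref{lem:minimalcomplex}) a direct summand of $C_\lambda$ in $C^b\big(\Tilt(\mathcal{C})\big)$. The main obstacle is the inductive cone construction: one has to lift the connecting morphisms to honest chain maps of tilting complexes and verify that the mapping cones still represent the $X_i$ while keeping exact control of the terms; once this is in place, the numerical part is a routine comparison of coefficients.
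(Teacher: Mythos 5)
Your proof is correct and takes essentially the same route as the paper: apply Proposition \ref{prop:trianglestandardresolution} to $L_\lambda$, realize the iterated cones as a bounded complex of tilting objects whose $j$-th term is $\bigoplus_{l} C_\mathrm{min}(\Delta_{\lambda_l})_{j+n_l}$, translate the direct-summand multiplicities via Lemma \ref{lem:minimaltiltingcomplexstandard} and the Ext-count from the triangle resolution, and invoke Lemma \ref{lem:minimaltiltingcomplex} for the final claim. The only difference is cosmetic: you spell out the transport of connecting morphisms along $\mathfrak{T}$ and the two-out-of-three argument for triangle morphisms, which the paper compresses into ``by triangle rotation \ldots\ and by descending induction.''
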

\begin{proof}
	By Proposition \ref{prop:trianglestandardresolution}, there are sequences $n_0,\ldots,n_r$ of non-negative integers, $\lambda_0 , \ldots , \lambda_r$ of weights and $X_0 , \ldots , X_{r+1}$ of bounded complexes in $\mathcal{C}$, with $X_0 \cong L_\lambda$ in $D^b(\mathcal{C})$ and $X_{r+1} = 0$, such that there exist distinguished triangles
	\[ \Delta_{\lambda_i}[n_i] \longrightarrow X_i \longrightarrow X_{i+1} \longrightarrow \Delta_{\lambda_i}[n_i+1] \]
	for $i=0,\ldots,r$ and we have
	\begin{equation} \label{eq:extsimplecostandard}
	\dim \Ext_\mathcal{C}^\bullet\big( L_\lambda , \nabla_\mu \big) = \sum_{i=0}^r \delta_{\lambda_i,\mu} \cdot v^{n_i} .
	\end{equation}
	By triangle rotation, the object $X_i$ is isomorphic to the cone of a homomorphism $X_{i+1}[-1] \to \Delta_{\lambda_i}[n_i]$ for $i=0,\ldots,r$, and by descending induction on $i$, we see that $L_\lambda \cong X_0$ is isomorphic in $D^b(\mathcal{C})$ to a bounded complex $C_\lambda = ( T_\bullet , d_\bullet )$ of tilting objects with terms
	\[ T_j \coloneqq \bigoplus_{i=0}^r C_\mathrm{min}( \Delta_{\lambda_i}[n_i] )_j = \bigoplus_{i=0}^r C_\mathrm{min}( \Delta_{\lambda_i} )_{j+n_i} . \]
	By equation \eqref{eq:extsimplecostandard}, we have $\Ext_\mathcal{C}^{n_i}( L_\lambda , \nabla_{\lambda_i} ) \neq 0$ for $i=0,\ldots,r$ and therefore $\lambda \geq \lambda_i$ by Lemma \ref{lem:Extvanishingsimplecostandard}.
	For any finite order ideal $\Gamma \subseteq \Lambda$ with $\lambda \in \Gamma$, we further have
	\[ \big[ C_\mathrm{min}( \Delta_{\lambda_i} )_{j+n_i} : T_\mu \big]_\oplus = \dim \Ext_{\mathcal{C}_\Gamma^\prime}^{j+n_i}( L^\prime_\mu , \nabla^\prime_{\lambda_i} ) \]
	for all $\mu \in \Lambda$, $j \in \Z$ and $i \in \{ 0 , \ldots, r \}$ by Lemma \ref{lem:minimaltiltingcomplexstandard}, and for $\nu \in \Lambda$ and $m \geq 0$, we have
	\[ \dim \Ext_\mathcal{C}^m( L_\lambda , \nabla_\nu ) = \abs{ \{ i \mid \lambda_i = \nu \text{ and } n_i = m \} } \]
	by equation \eqref{eq:extsimplecostandard}.
	We conclude that
	\begin{align*}
	[ T_j : T_\mu ]_\oplus & = \sum_{i=0}^r \dim \Ext_{\mathcal{C}_\Gamma^\prime}^{j+n_i}( L^\prime_\mu , \nabla^\prime_{\lambda_i} ) \\
	& = \sum_{\nu \leq \lambda} \; \sum_{k \geq 0} \dim \Ext_{\mathcal{C}_\Gamma^\prime}^k( L^\prime_\mu , \nabla^\prime_\nu ) \cdot \abs{ \{ i \mid \lambda_i = \nu \text{ and } j+n_i = k \} } \\
	& = \sum_{\nu \leq \lambda} \; \sum_{k \geq 0} \dim \Ext_{\mathcal{C}_\Gamma^\prime}^k( L^\prime_\mu , \nabla^\prime_\nu ) \cdot \dim \Ext_\mathcal{C}^{k-j}( L_\lambda , \nabla_\nu )
	\end{align*}
	for all $j \in \Z$, as required.
	The minimal tilting complex $C_\mathrm{min}(L_\lambda)$ is a direct summand of $C_\lambda$ in the category of complexes in $\Tilt(\mathcal{C})$ by Lemma \ref{lem:minimaltiltingcomplex}.
\end{proof}

If the $\Ext$-groups that we considered in the preceding proposition satisfy certain parity vanishing conditions then our upper bound for the multiplicities of indecomposable tilting objects appearing in the terms of minimal tilting complexes of simple objects in $\mathcal{C}$ is sharp.

\begin{Corollary} \label{cor:minimaltiltingcomplexsimple}
	Let $\lambda \in \Lambda$ and let $\Gamma \subseteq \Lambda$ be a finite order ideal containing $\lambda$.
	For $\mu , \nu \in \Gamma$, consider the polynomials
	\[ p_{\nu,\lambda} = \sum_{i \geq 0} \dim \Ext_\mathcal{C}^i( L_\lambda , \nabla_\nu ) \cdot v^i \qquad \text{and} \qquad p^\prime_{\nu,\mu} = \sum_{i \geq 0} \dim \Ext_{\mathcal{C}_\Gamma^\prime}^i( L^\prime_\mu , \nabla^\prime_\nu ) \cdot v^i . \]
	Suppose that there is a function $\ell\colon \Gamma \to \Z$ such that
	\[ p_{\nu,\lambda} \in v^{\ell(\lambda)-\ell(\nu)} \cdot \Z[v^{\pm 2}] \qquad \text{and} \qquad p^\prime_{\nu,\mu} \in v^{\ell(\mu)-\ell(\nu)} \cdot \Z[v^{\pm 2}] \]
	for all $\nu \in \Gamma$.
	Then
	\[ \sum_{i \in \Z} \big[ C_\mathrm{min}( L_\lambda )_i : T_\mu \big]_\oplus \cdot v^i = \sum_{\nu \leq \lambda} \overline{ p_{\nu,\lambda} } \cdot p^\prime_{\nu,\mu} , \]
	where $\overline{\phantom{A}}\colon \Z[v^{\pm 1}] \to \Z[v^{\pm 1}]$ denotes the unique ring homomorphism with $v \mapsto v^{-1}$.
\end{Corollary}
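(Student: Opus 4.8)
The plan is to show that, under the stated parity hypothesis, the complex $C_\lambda$ produced by Proposition \ref{prop:minimaltiltingcomplexsimple} is already minimal; once this is established, the corollary follows by simply rewriting the term multiplicities of $C_\lambda$ in terms of the polynomials $p_{\nu,\lambda}$ and $p'_{\nu,\mu}$.

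First I would translate the multiplicity formula of Proposition \ref{prop:minimaltiltingcomplexsimple} into a generating-function identity. Multiplying the identity
\[ [ (C_\lambda)_i : T_\mu ]_\oplus = \sum_{\nu \leq \lambda}\ \sum_{k-j=i} \dim\Ext_\mathcal{C}^j(L_\lambda,\nabla_\nu)\cdot \dim\Ext_{\mathcal{C}_\Gamma^\prime}^k(L_\mu^\prime,\nabla_\nu^\prime) \]
by $v^i$ and summing over $i \in \Z$ — equivalently, summing over all pairs $j,k \geq 0$ with weight $v^{k-j}$ — one obtains
\[ \sum_{i \in \Z} [ (C_\lambda)_i : T_\mu ]_\oplus \cdot v^i = \sum_{\nu \leq \lambda} \overline{p_{\nu,\lambda}}\cdot p'_{\nu,\mu} , \]
using that $\Ext_\mathcal{C}^j(L_\lambda,\nabla_\nu) = 0$ unless $\nu \leq \lambda$ by Lemma \ref{lem:Extvanishingsimplecostandard}, so that the sum over $\nu$ may be taken over $\Gamma$ or over $\{\nu \mid \nu \leq \lambda\}$ interchangeably. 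This is the right-hand side of the claimed formula; it only remains to identify the left-hand side with $\sum_i [C_\mathrm{min}(L_\lambda)_i : T_\mu]_\oplus \cdot v^i$, i.e.\ to show $C_\lambda \cong C_\mathrm{min}(L_\lambda)$ as complexes.

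Next I would extract parity information from the hypothesis. Since $p_{\nu,\lambda} \in v^{\ell(\lambda)-\ell(\nu)}\cdot\Z[v^{\pm 2}]$ we have $\overline{p_{\nu,\lambda}} \in v^{\ell(\nu)-\ell(\lambda)}\cdot\Z[v^{\pm 2}]$, and multiplying by $p'_{\nu,\mu} \in v^{\ell(\mu)-\ell(\nu)}\cdot\Z[v^{\pm 2}]$ gives $\overline{p_{\nu,\lambda}}\cdot p'_{\nu,\mu} \in v^{\ell(\mu)-\ell(\lambda)}\cdot\Z[v^{\pm 2}]$, a congruence class independent of $\nu$. Summing over $\nu \leq \lambda$, the generating function above lies in $v^{\ell(\mu)-\ell(\lambda)}\cdot\Z[v^{\pm 2}]$; in particular $[ (C_\lambda)_i : T_\mu ]_\oplus = 0$ whenever $i \not\equiv \ell(\mu) - \ell(\lambda) \pmod 2$. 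Consequently, no indecomposable tilting object $T_\mu$ can occur as a direct summand of two consecutive terms $(C_\lambda)_i$ and $(C_\lambda)_{i+1}$.

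The final step deduces minimality of $C_\lambda$. If some differential $d_i$ of $C_\lambda$ were not in the radical of $\Tilt(\mathcal{C})$, then by Lemma \ref{lem:radical} an isomorphism between indecomposable objects of $\Tilt(\mathcal{C})$ would factor through $d_i$; such an isomorphism is necessarily of the form $T_\mu \to T_\mu$, and it factoring through $d_i$ forces $T_\mu$ to be a direct summand of both $(C_\lambda)_i$ and $(C_\lambda)_{i+1}$, contradicting the parity statement just established. Hence every differential of $C_\lambda$ lies in the radical, so $C_\lambda$ is minimal. Since $C_\lambda \cong L_\lambda \cong C_\mathrm{min}(L_\lambda)$ in $D^b(\mathcal{C})$, hence in $K^b(\Tilt(\mathcal{C}))$ via the equivalence $\mathfrak{T}$ of Proposition \ref{prop:tiltinghomotopyderived}, and both complexes are minimal, Corollary \ref{cor:minimalcomplexunique} gives $C_\lambda \cong C_\mathrm{min}(L_\lambda)$ in $C^b(\Tilt(\mathcal{C}))$; substituting this into the identity from the first step proves the corollary. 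I expect no serious obstacle here: the only point that needs care is the bookkeeping which converts the parity constraints on $p_{\nu,\lambda}$ and $p'_{\nu,\mu}$ into the non-adjacency of indecomposable tilting summands, and this is precisely what makes $C_\lambda$ minimal.
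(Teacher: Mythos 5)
Your proof is correct and rests on the same parity observation as the paper's, but your final step takes a slightly different (and arguably cleaner) route. Where you invoke Lemma \ref{lem:radical} to show that each differential of $C_\lambda$ lies in the radical of $\Tilt(\mathcal{C})$ --- concluding that $C_\lambda$ is itself minimal, hence isomorphic to $C_\mathrm{min}(L_\lambda)$ as a complex by Corollary \ref{cor:minimalcomplexunique} --- the paper instead applies Corollary \ref{cor:minimalcomplexdirectsummandbound} to squeeze $[C_\mathrm{min}(L_\lambda)_i : T_\mu]_\oplus$ between $[T_i:T_\mu]_\oplus$ and $[T_i:T_\mu]_\oplus - [T_{i-1}:T_\mu]_\oplus - [T_{i+1}:T_\mu]_\oplus$, and then uses the parity vanishing to collapse this to an equality of multiplicities. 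Your version yields the marginally stronger statement $C_\lambda \cong C_\mathrm{min}(L_\lambda)$ in $C^b(\Tilt(\mathcal{C}))$, whereas the paper is content with equality of graded multiplicities, which is all the corollary asserts. Both are valid; the extraction of the generating-function identity in your first step, the verification that the sum over $\nu \in \Gamma$ can be replaced by the sum over $\nu \leq \lambda$ via Lemma \ref{lem:Extvanishingsimplecostandard}, and the parity bookkeeping showing $\overline{p_{\nu,\lambda}}\cdot p'_{\nu,\mu}$ lies in a $\nu$-independent coset of $\Z[v^{\pm 2}]$ all match the intended argument.
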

\begin{proof}
	By Proposition \ref{prop:minimaltiltingcomplexsimple}, there is a bounded complex $C_\lambda=(T_\bullet,d_\bullet)$ in $\Tilt(\mathcal{C})$ such that the simple object $L_\lambda$ is isomorphic to $C_\lambda$ in $D^b(\mathcal{C})$ and
	\[ \sum_{i\in\Z} [ T_i : T_\mu ]_\oplus \cdot v^i = \sum_{\nu \leq \lambda} \overline{ p_{\nu,\lambda} } \cdot p^\prime_{\nu,\mu} \eqqcolon \mathfrak{t}_{\mu,\lambda} . \]
	Thus we have $C_\lambda \cong L_\lambda \cong C_\mathrm{min}(L_\lambda)$ in $D^b(\mathcal{C})$, so $C_\mathrm{min}(L_\lambda)$ is the minimal tilting complex of $C_\lambda$ by Lemma \ref{lem:minimaltiltingcomplex}, and using Corollary \ref{cor:minimalcomplexdirectsummandbound}, it follows that
	\[ [ T_i : T_\mu ]_\oplus \geq [ C_\mathrm{min}(L_\lambda)_i : T_\mu ]_\oplus \geq [ T_i : T_\mu ]_\oplus - [ T_{i-1} : T_\mu ]_\oplus - [ T_{i+1} : T_\mu ]_\oplus \]
	for all $i \in \Z$.
	Next observe that by assumption, we have
	\[ \overline{ p_{\nu,\lambda} } \cdot p^\prime_{\nu,\mu} \in v^{\ell(\lambda)-\ell(\nu)} \cdot v^{\ell(\mu)-\ell(\nu)} \cdot \Z[v^{\pm 2}] = v^{\ell(\lambda)+\ell(\mu)} \cdot \Z[v^{\pm 2}] \]
	for all $\nu \in \Gamma$, so that $\mathfrak{t}_{\mu,\lambda} \in v^{\ell(\lambda)+\ell(\mu)} \cdot \Z[v^{\pm 2}]$.
	In particular, we have $[T_i:T_\mu]_\oplus = 0$ for all $i \in \Z$ such that $i \not\equiv \ell(\lambda) + \ell(\mu) \mod 2$.
	It follows that $[ C_\mathrm{min}(L_\lambda)_i : T_\mu ]_\oplus = [ T_i : T_\mu ]_\oplus$ for all $i \in \Z$ and the claim is immediate.
\end{proof}

\begin{Remark}
	In the terminology of \cite[Definition 3.3]{CPSabstractKL}, the assumption that
	\[ p_{\nu,\lambda} \in v^{\ell(\lambda)-\ell(\nu)} \cdot \Z[v^{\pm 2}] \qquad \text{ for all } \nu \in \Gamma , \]
	as in Corollary \ref{cor:minimaltiltingcomplexsimple}, means that the simple object $L_\lambda$ has a \emph{left parity} in $\mathcal{C}_\Gamma$ with respect to the \emph{length function} $\ell \colon \Gamma \to \Z$.
	Similarly, the assumption that
	\[ p^\prime_{\nu,\mu} \in v^{\ell(\mu)-\ell(\nu)} \cdot \Z[v^{\pm 2}] \qquad \text{ for all } \nu \in \Gamma \]
	means that $L^\prime_\mu$ has a left parity in $\mathcal{C}_\Gamma^\prime$ with respect to $\ell \colon \Gamma \to \Z$. 
\end{Remark}

\section{Examples} \label{sec:examples}

In this section, we determine the multiplicities of indecomposable tilting objects appearing in the terms of minimal tilting complexes of standard objects and simple objects in certain categories of representations of complex semisimple Lie algebras, affine Kac-Moody algebras and quantum groups at roots of unity.

\subsection{Complex simple Lie algebras} \label{subsec:categoryO}

Let $\mathfrak{g}$ be a complex semisimple Lie algebra with a Borel subalgebra $\mathfrak{b}$ and a Cartan subalgebra $\mathfrak{h} \subseteq \mathfrak{b}$, and let $\Phi \subseteq \mathfrak{h}^*$ be the root system of $\mathfrak{g}$ with respect to $\mathfrak{h}$.
We write $\Phi^+ \subseteq \Phi$ for the positive system corresponding to $\mathfrak{b}$ and let $\Pi \subseteq \Phi^+$ be a base of $\Phi$, and we consider the partial order $\leq$ on $\mathfrak{h}^*$ with
\[ \lambda \leq \mu \qquad \text{if and only if} \qquad \mu - \lambda \in \sum_{\alpha \in \Pi} \Z_{\geq 0} \cdot \alpha \]
for $\lambda,\mu \in \mathfrak{h}^*$.
Let $( - \, , - )$ be an invariant bilinear form on $\mathfrak{g}$ and recall that there is an isomorphism
\[ \mathfrak{h}^* \longrightarrow \mathfrak{h}, \qquad \lambda \longmapsto h_\lambda \]
such that $\lambda(x) = ( h_\lambda , x )$ for all $x \in \mathfrak{h}$ and $\lambda \in \mathfrak{h}^*$.
We further suppose that $(h_\beta,h_\beta) = 2$ for every long root $\beta \in \Phi$, and we define the \emph{coroot} of a root $\alpha \in \Phi$ as $\alpha^\vee = \frac{2 h_\alpha}{(h_\alpha,h_\alpha)}$.
The reflection $s_\alpha \in \GL(\mathfrak{h}^*)$ corresponding to $\alpha$ is defined by $s_\alpha(\lambda) = \lambda - \lambda(\alpha^\vee) \cdot \alpha$ for $\lambda \in \mathfrak{h}^*$, and we write $W = \langle s_\alpha \mid \alpha \in \Phi \rangle$ for the Weyl group of $\Phi$.
Then $W$ is a finite Coxeter group with simple reflections $S = \{ s_\alpha \mid \alpha \in \Pi \}$, and we denote by $w_0$ the longest element of $W$.
We let
\[ X = \{ \lambda \in \mathfrak{h}^* \mid \lambda(\alpha^\vee) \in \Z \text{ for all } \alpha \in \Phi \} \qquad \text{and} \qquad X^+ = \{ \lambda \in X \mid \lambda(\alpha^\vee) \geq 0 \text{ for all } \alpha \in \Phi^+ \} \]
be the sets of \emph{integral weights} and \emph{dominant weights}, respectively, and write $\rho = \frac12 \cdot \sum_{\alpha \in \Phi^+} \alpha$ for the half-sum of all positive roots.
The \emph{dot action} of $W$ on $\mathfrak{h}^*$ is defined by
\[ w \Cdot \lambda = w( \lambda + \rho ) - \rho \]
for $w \in W$ and $\lambda \in \mathfrak{h}^*$.

We consider the category $\mathcal{O}$ of finitely generated $\mathfrak{g}$-modules that are locally $\mathfrak{b}$-finite and admit a weight space decomposition with respect to $\mathfrak{h}$; see for instance \cite{HumphreysCategoryO}.
For all $\lambda \in \mathfrak{h}^*$, the Verma module $\Delta_\lambda$ of highest weight $\lambda$ and its unique simple quotient $L_\lambda$ are objects of $\mathcal{O}$, and so is the dual Verma module $\nabla_\lambda$.
For every weight $\lambda \in - \rho - X^+ = \{ - \rho - \lambda \mid \lambda \in X^+ \}$, we denote by $\mathcal{O}_\lambda$ the block of $\mathcal{O}$ containing the simple $\mathfrak{g}$-module $L_\lambda$.
The stabilizer of $\lambda$ with respect to the dot action of $W$ on $\mathfrak{h}^*$ is a parabolic subgroup of $W$, and for $I \subseteq S$ such that $\Stab_W(\lambda) = W_I = \langle I \rangle$, the isomorphism classes of simple $\mathfrak{g}$-modules in $\mathcal{O}_\lambda$ are in bijection with the set $W^I$ of elements $w \in W$ that have minimal length in the coset $w W_I$.

Now let $J \subseteq S$ and denote by $\mathfrak{p}_J$ the parabolic subalgebra of $\mathfrak{g}$ that is generated by $\mathfrak{b}$ together with the root spaces $\mathfrak{g}_{-\alpha}$ for $\alpha \in \Pi$ with $s_\alpha \in J$.
We write $\mathcal{O}^{\mathfrak{p}_J}$ for the full subcategory of $\mathcal{O}$ whose objects are the locally $\mathfrak{p}_J$-finite $\mathfrak{g}$-modules in $\mathcal{O}$, and for $\lambda \in -\rho - X^+$, let $\mathcal{O}_\lambda^{\mathfrak{p}_J}$ be the full subcategory of $\mathcal{O}_\lambda$ whose objects are the locally $\mathfrak{p}_J$-finite $\mathfrak{g}$-modules in $\mathcal{O}_\lambda$.
For $I \subseteq S$ such that $\Stab_W(\lambda) = W_I$ and for $w \in W^I$, the simple $\mathfrak{g}$-module $L_{w\Cdot\lambda}$ belongs to $\mathcal{O}_\lambda^{\mathfrak{p}_J}$ if and only if $(w\Cdot\lambda)(\alpha^\vee) \geq 0$ for all $\alpha \in J$, or equivalently, if $w \in w_J \prescript{J}{}{W} \cap W^I = w_J \prescript{J}{}{W}^I_\mathrm{reg}$.
Here, we write $w_J$ for the longest element of $W_J$ and $\prescript{J}{}{W}$ for the set of elements $w \in W$ that have minimal length in the coset $W_J w$.
Furthermore, we denote by $\prescript{J}{}{W}^I_\mathrm{reg}$ the set of minimal length representatives for the regular double cosets in $W_J \backslash W / W_I$, where a double coset $W_J w W_I$, with $w \in \prescript{J}{}{W} \cap W^I$, is called regular if $J \cap w I w^{-1} = \varnothing$.
(See also Appendix A in \cite{GruberSingularParabolic}.)
In particular, the isomorphism classes of simple $\mathfrak{g}$-modules in $\mathcal{O}_\lambda^{\mathfrak{p}_J}$ are in bijection with the set $w_J \prescript{J}{}{W}^I_\mathrm{reg}$ via $w \mapsto L_{w\Cdot\lambda}$.
For $\mu \in \mathfrak{h}^*$, let us write $\Delta^{\mathfrak{p}_J}_\mu$ for the largest quotient of $\Delta_\mu$ that belongs to $\mathcal{O}^{\mathfrak{p}_J}$ and $\nabla^{\mathfrak{p}_J}_\mu$ for the largest submodule of $\nabla_\mu$ that belongs to $\mathcal{O}^{\mathfrak{p}_J}$, and call these $\mathfrak{g}$-modules the \emph{generalized} or \emph{parabolic} Verma module and dual Verma module, respectively, of highest weight $\mu$ with respect to $\mathfrak{p}_J$.
Then $\mathcal{O}_\lambda^{\mathfrak{p}_J}$ is a highest weight category with weight poset $(w_J \prescript{J}{}{W}^I_\mathrm{reg},\leq)$, where $\leq$ denotes the Bruhat order, with simple objects $L_{w\Cdot\lambda}$, standard objects $\Delta^{\mathfrak{p}_J}_{w\Cdot\lambda}$ and costandard objects $\nabla^{\mathfrak{p}_J}_{w\Cdot\lambda}$ for $w \in w_J \prescript{J}{}{W}^I_\mathrm{reg}$.

Now for $x,y \in W$, let us write $h_{x,y}$ and $h^{x,y}$ for the corresponding Kazhdan-Lusztig polynomial and inverse Kazhdan-Lusztig polynomial, respectively, in the normalization of \cite{SoergelKL}.
For a subset $I \subseteq S$ and $x,y \in \prescript{I}{}{W}$, we denote by $m^I_{x,y}$, $m_I^{x,y}$, $n^I_{x,y}$ and $n_I^{x,y}$ the corresponding spherical and anti-spherical (inverse) parabolic Kazhdan-Lusztig polynomials, as in \cite[Section 3]{SoergelKL}.
For $J \subseteq S$, $\lambda \in - \rho - X^+$ with $\Stab_W(\lambda) = W_I$ and $x \in w_J \prescript{J}{}{W}^I_\mathrm{reg}$, let us further write $T^{\mathfrak{p}_J}_{x\Cdot\lambda}$ for the indecomposable tilting $\mathfrak{g}$-module of highest weight $x\Cdot\lambda$ in $\mathcal{O}^{\mathfrak{p}_J}_\lambda$.
We can now determinate the multiplicities of indecomposable tilting $\mathfrak{g}$-modules appearing in the terms of minimal tilting complexes of generalized Verma modules and simple $\mathfrak{g}$-modules.

\begin{Theorem} \label{thm:minimaltiltingcomplexescategoryO}
	Let $I,J \subseteq S$ and let $\lambda \in - \rho - X^+$ such that $\Stab_W(\lambda) = W_I$.
	Then, for all $i \in \Z$ and all $x,y \in w_J \prescript{J}{}{W}^I_\mathrm{reg}$, we have
	\[ \sum_{i \in \Z} \big[ C_\mathrm{min}( \Delta^{\mathfrak{p}_J}_{x\Cdot\lambda} )_i : T^{\mathfrak{p}_J}_{y\Cdot\lambda} \big]_\oplus \cdot v^i = n^I_{w_I y^{-1} w_J w_0,w_I x^{-1} w_J w_0} = m_I^{ x^{-1} w_J , y^{-1} w_J } \]
	and
	\[ \sum_{i \in \Z} \big[ C_\mathrm{min}( L_{x\Cdot\lambda} )_i : T^{\mathfrak{p}_J}_{y\Cdot\lambda} \big]_\oplus \cdot v^i = \sum_{z \in w_J \prescript{J}{}{W}^I_\mathrm{reg}} \overline{ n^I_{z^{-1},x^{-1}} } \cdot m_I^{ z^{-1} w_J , y^{-1} w_J } \]
	where we take minimal tilting complexes in the highest weight category $\mathcal{O}^{\mathfrak{p}_J}_\lambda$.
\end{Theorem}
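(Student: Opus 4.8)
The plan is to work throughout in the highest weight category $\mathcal{C} = \mathcal{O}^{\mathfrak{p}_J}_\lambda$, and to obtain the two formulas by applying Lemma~\ref{lem:minimaltiltingcomplexstandard} to the standard object $\Delta^{\mathfrak{p}_J}_{x\Cdot\lambda}$ and Corollary~\ref{cor:minimaltiltingcomplexsimple} to the simple object $L_{x\Cdot\lambda}$, and then evaluating the resulting $\Ext$-groups. Since the weight poset $(w_J\prescript{J}{}{W}^I_\mathrm{reg},\leq)$ of $\mathcal{C}$ is finite, we may take the order ideal $\Gamma$ appearing in those statements to be all of $w_J\prescript{J}{}{W}^I_\mathrm{reg}$, so that $\mathcal{C}^\prime_\Gamma = \mathcal{C}^\prime$ is the Ringel dual of $\mathcal{O}^{\mathfrak{p}_J}_\lambda$. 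Two classical inputs are required: a description of this Ringel dual, and the identification of $\Ext$-groups between simple objects and (co)standard objects in blocks of (singular-parabolic) category $\mathcal{O}$ with coefficients of parabolic Kazhdan--Lusztig polynomials.

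For the first input I would use the fact that Ringel duality interchanges the parabolic and the singular directions: there is an equivalence of highest weight categories
\[ \mathcal{C}^\prime = \big( \mathcal{O}^{\mathfrak{p}_J}_\lambda \big)^\prime \;\simeq\; \mathcal{O}^{\mathfrak{p}_I}_{\lambda^\circ} , \]
where $\lambda^\circ \in -\rho - X^+$ is any weight with $\Stab_W(\lambda^\circ) = W_J$, under which standard objects and costandard objects are interchanged and simple objects are preserved, and which identifies the weight poset $w_J\prescript{J}{}{W}^I_\mathrm{reg}$ of $\mathcal{C}^\prime$, equipped with the \emph{opposite} Bruhat order, with the weight poset $w_I\prescript{I}{}{W}^J_\mathrm{reg}$ of $\mathcal{O}^{\mathfrak{p}_I}_{\lambda^\circ}$ via the order-reversing bijection $x \mapsto \widehat{x} \coloneqq w_I x^{-1} w_J w_0$. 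This I would deduce from Soergel's combinatorial description of the blocks of $\mathcal{O}$ and of Ringel duality on them; see \cite{SoergelKL} and \cite{GruberSingularParabolic}. That $x \mapsto \widehat{x}$ restricts to an order-reversing bijection between the two sets of minimal regular double coset representatives is a routine verification with parabolic subgroups, using that $x \mapsto x^{-1}$ preserves the Bruhat order while left multiplication by the longest element of a standard parabolic, and right multiplication by $w_0$, reverse it. In particular, under this equivalence $\nabla^\prime_{x\Cdot\lambda} = R_\mathcal{C}^\nabla( \Delta^{\mathfrak{p}_J}_{x\Cdot\lambda} )$ corresponds to $\nabla^{\mathfrak{p}_I}_{\widehat{x}\Cdot\lambda^\circ}$ and $L^\prime_{y\Cdot\lambda}$ corresponds to $L_{\widehat{y}\Cdot\lambda^\circ}$.

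The second input is the identification, in the normalization of \cite{SoergelKL}, of the graded dimension of $\Ext^\bullet(L_u,\nabla^{\mathfrak{p}}_v)$ (simple into costandard) in a block of singular-parabolic category $\mathcal{O}$ with an anti-spherical parabolic Kazhdan--Lusztig polynomial; this goes back to work of Soergel and to the Koszulity of such blocks (Beilinson--Ginzburg--Soergel \cite{BeilinsonGinzburgSoergel} and its parabolic and singular extensions; see also \cite{CPSabstractKL}). The Koszulity also supplies the parity hypothesis of Corollary~\ref{cor:minimaltiltingcomplexsimple}: with $\ell$ the suitably normalized Bruhat length on $w_J\prescript{J}{}{W}^I_\mathrm{reg}$, each of the polynomials $p_{\nu,\lambda}$ and $p^\prime_{\nu,\mu}$ is an (inverse) parabolic Kazhdan--Lusztig polynomial, and in Soergel's normalization every such polynomial attached to $\nu,\lambda$ lies in $v^{\ell(\lambda)-\ell(\nu)} \cdot \Z[v^{\pm 2}]$. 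Granting these, Lemma~\ref{lem:minimaltiltingcomplexstandard} identifies
\[ \sum_{i \in \Z} \big[ C_\mathrm{min}( \Delta^{\mathfrak{p}_J}_{x\Cdot\lambda} )_i : T^{\mathfrak{p}_J}_{y\Cdot\lambda} \big]_\oplus \cdot v^i \;=\; \sum_{i \geq 0} \dim \Ext^i_{\mathcal{C}^\prime}\big( L^\prime_{y\Cdot\lambda} , \nabla^\prime_{x\Cdot\lambda} \big) \cdot v^i \;=\; \sum_{i \geq 0} \dim \Ext^i_{\mathcal{O}^{\mathfrak{p}_I}_{\lambda^\circ}}\big( L_{\widehat{y}\Cdot\lambda^\circ} , \nabla^{\mathfrak{p}_I}_{\widehat{x}\Cdot\lambda^\circ} \big) \cdot v^i = n^I_{\widehat{y},\widehat{x}} , \]
and $n^I_{\widehat{y},\widehat{x}} = m_I^{x^{-1}w_J,\,y^{-1}w_J}$ by the inversion formula relating the anti-spherical and the inverse spherical parabolic Kazhdan--Lusztig polynomials \cite[Section 3]{SoergelKL}. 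For the simple object, Corollary~\ref{cor:minimaltiltingcomplexsimple} gives
\[ \sum_{i \in \Z} \big[ C_\mathrm{min}( L_{x\Cdot\lambda} )_i : T^{\mathfrak{p}_J}_{y\Cdot\lambda} \big]_\oplus \cdot v^i \;=\; \sum_{z} \overline{ p_{z\Cdot\lambda,\,x\Cdot\lambda} } \cdot p^\prime_{z\Cdot\lambda,\,y\Cdot\lambda} , \]
and substituting $p_{z\Cdot\lambda,x\Cdot\lambda} = n^I_{z^{-1},x^{-1}}$ (the $\Ext$-groups computed inside $\mathcal{O}^{\mathfrak{p}_J}_\lambda$) together with $p^\prime_{z\Cdot\lambda,y\Cdot\lambda} = n^I_{\widehat{y},\widehat{z}} = m_I^{z^{-1}w_J,\,y^{-1}w_J}$ (the $\Ext$-groups transported to $\mathcal{O}^{\mathfrak{p}_I}_{\lambda^\circ}$ and rewritten via the same inversion formula) produces the asserted expression $\sum_{z} \overline{ n^I_{z^{-1},x^{-1}} } \cdot m_I^{z^{-1}w_J,\,y^{-1}w_J}$, where the sum may be taken over all $z \in w_J\prescript{J}{}{W}^I_\mathrm{reg}$ since the summands with $z \nleq x$ vanish.

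The homological content of the argument is thus entirely confined to the two inputs Lemma~\ref{lem:minimaltiltingcomplexstandard} and Corollary~\ref{cor:minimaltiltingcomplexsimple}, and I expect the main obstacle to be combinatorial: making the Ringel-dual description and the bijection $x \mapsto \widehat{x}$ precise for a \emph{doubly} singular-parabolic block, and then threading the four flavours $m^I$, $m_I$, $n^I$, $n_I$ of (inverse) parabolic Kazhdan--Lusztig polynomials through every index substitution so that the $\Ext$-computations land exactly on the polynomials in the statement, in particular verifying the inversion identity $n^I_{w_I u w_0,\, w_I w w_0} = m_I^{u,w}$ in the generality needed here. The Koszulity input used for the parity hypothesis is standard once cited in the appropriate parabolic-and-singular form.
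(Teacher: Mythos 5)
Your overall strategy matches the paper's: work in $\mathcal{C}=\mathcal{O}^{\mathfrak{p}_J}_\lambda$, feed a description of its Ringel dual and the graded $\Ext$-dimensions $\dim\Ext^\bullet(L,\nabla^{\mathfrak p})$ into Lemma~\ref{lem:minimaltiltingcomplexstandard} and Corollary~\ref{cor:minimaltiltingcomplexsimple}, and verify the parity hypothesis from the support of parabolic Kazhdan--Lusztig polynomials. However, there is a genuine error in your first input, the identification of the Ringel dual.

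You assert that $\big(\mathcal{O}^{\mathfrak p_J}_\lambda\big)^\prime$, where $\Stab_W(\lambda)=W_I$, is equivalent as a highest weight category to $\mathcal{O}^{\mathfrak p_I}_{\lambda^\circ}$ with $\Stab_W(\lambda^\circ)=W_J$, i.e.\ that Ringel duality exchanges the parabolic and singular types. That is the content of \emph{parabolic--singular Koszul duality} (Beilinson--Ginzburg--Soergel, Backelin), which is a graded derived equivalence and not an abelian equivalence; these two categories are not equivalent in general (already for $\mathfrak{sl}_3$ with $I=\varnothing$ and $|J|=1$ the regular parabolic block and the singular non-parabolic block have non-isomorphic basic algebras). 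The actual Ringel dual, and the one the paper uses, is Coulembier--Mazorchuk's: $\big(\mathcal{O}^{\mathfrak p_J}_\lambda\big)^\prime\simeq\mathcal{O}^{\mathfrak p_{\widehat J}}_\lambda$ with $\widehat J=w_0Jw_0$, so the singular type stays $W_I$ and only the parabolic type changes. Your proof as written uses the wrong category.

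This error is then masked by a second one. If one computed $\dim\Ext^\bullet(L,\nabla^{\mathfrak p})$ inside your proposed Ringel dual $\mathcal{O}^{\mathfrak p_I}_{\lambda^\circ}$, whose singular type is $W_J$, the formula from \cite{GruberSingularParabolic} (or the Koszulity reference you cite) would produce $n^J$-polynomials, not $n^I$-polynomials, and with different argument placement. You instead write $n^I_{\widehat y,\widehat x}$, which is the answer one gets in the \emph{correct} Ringel dual $\mathcal{O}^{\mathfrak p_{\widehat J}}_\lambda$ (singular type $W_I$). So you arrive at the right final expressions $n^I_{w_Iy^{-1}w_Jw_0,\,w_Ix^{-1}w_Jw_0}=m_I^{x^{-1}w_J,\,y^{-1}w_J}$ only because two incompatibilities cancel. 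To repair the argument, replace the Ringel-dual identification with the Coulembier--Mazorchuk one, keep track of the explicit effect of $R^\nabla_{\mathcal{O}^{\mathfrak p_J}_\lambda}$ on generalized Verma modules, namely $R^\nabla(\Delta^{\mathfrak p_J}_{x\Cdot\lambda})\cong\nabla^{\mathfrak p_{\widehat J}}_{w_0w_Jxw_I\Cdot\lambda}$, and then apply the $\Ext$-formula $\dim\Ext^\bullet(L_{u\Cdot\lambda},\nabla^{\mathfrak p_{\widehat J}}_{v\Cdot\lambda})=n^I_{v^{-1},u^{-1}}$ inside $\mathcal{O}^{\mathfrak p_{\widehat J}}_\lambda$; the parity input from Soergel's Remark 3.2 then finishes the simple-object case exactly as you outline.
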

\begin{proof}
	By Theorem 2.2 in \cite{GruberSingularParabolic}, we have
	\[ \sum_{i \geq 0} \dim \Ext_{\mathcal{O}^{\mathfrak{p}_J}}^i\big( L_{x\Cdot\lambda} , \nabla^{\mathfrak{p}_J}_{y\Cdot\lambda} \big) \cdot v^i = n^I_{y^{-1},x^{-1}} . \]
	Furthermore, the Ringel dual $(\mathcal{O}_\lambda^{\mathfrak{p}_J})^\prime$ of $\mathcal{O}_\lambda^{\mathfrak{p}_J}$ is equivalent to $\mathcal{O}_\lambda^{\mathfrak{p}_{\widehat{J}}}$ by Theorem 8.1 in \cite{CoulembierMazorchukDualities}, where we write $\widehat{J} = w_0 J w_0$, and the Ringel duality functor
	\[ R_{\mathcal{O}_\lambda^{\mathfrak{p}_J}}^\nabla \colon \mathcal{O}_\lambda^{\mathfrak{p}_J} \longrightarrow (\mathcal{O}_\lambda^{\mathfrak{p}_J})^\prime \simeq \mathcal{O}_\lambda^{\mathfrak{p}_{\widehat{J}}} \]
	satisfies
	\[ R_{\mathcal{O}_\lambda^{\mathfrak{p}_J}}^\nabla\big( \Delta^{\mathfrak{p}_J}_{ x \Cdot \lambda } \big) \cong \nabla^{\mathfrak{p}_{\widehat{J}}}_{ w_0 w_J x w_I \Cdot \lambda } \]
	for all $x \in w_J \prescript{J}{}{W}^I_\mathrm{reg}$.
	Thus, the dimensions of the $\Ext$-groups between the simple object corresponding to $y$ and the costandard object corresponding to $x$ in the Ringel dual $( \mathcal{O}^{\mathfrak{p}_J}_\lambda )^\prime$ are given by
	\[ \sum_{i \geq 0} \dim \Ext_{\mathcal{O}^{\mathfrak{p}_{\widehat{J}}}_\lambda}^i( L_{w_0w_Jyw_I} , \nabla^{\mathfrak{p}_J}_{w_0w_Jxw_I\Cdot\lambda} ) \cdot v^i = n^I_{w_Ix^{-1}w_Jw_0,w_Iy^{-1}w_Jw_0} , \]
	and by Proposition 3.9 in \cite{SoergelKL}, we have $n^I_{w_Ix^{-1}w_Jw_0,w_Iy^{-1}w_Jw_0} = m_I^{x^{-1}w_J,y^{-1}w_J}$.
	Now the first claim is immediate from Lemma \ref{lem:minimaltiltingcomplexstandard}.
	The second claim follows from Corollary \ref{cor:minimaltiltingcomplexsimple} because
	\[ n^I_{z^{-1},x^{-1}} \in v^{\ell(x) - \ell(z)} \cdot \Z[v^{\pm2}] \qquad \text{and} \qquad m_I^{z^{-1}w_J,y^{-1}w_J} \in v^{\ell(y) - \ell(z)} \cdot \Z[v^{\pm2}] \]
	for all $x,y,z \in w_J \prescript{J}{}{W}^I_\mathrm{reg}$ by Remark 3.2 in \cite{SoergelKL}.
\end{proof}

\begin{Remark} \label{rem:minimaltiltingcomplexescategoryOspecialcases}
	If we set $I = J = \varnothing$ in Theorem \ref{thm:minimaltiltingcomplexescategoryO} then we obtain
	\[ \sum_{i \in \Z} \big[ C_\mathrm{min}( \Delta_{x\Cdot\lambda} )_i : T_{y\Cdot\lambda} \big]_\oplus \cdot v^i = h^{x,y} \qquad \text{and} \qquad \sum_{i \in \Z} \big[ C_\mathrm{min}( L_{x\Cdot\lambda} )_i : T_{y\Cdot\lambda} \big]_\oplus \cdot v^i = \sum_{z \in W} \overline{ h_{z,x} } \cdot h^{z,y} . \]
\end{Remark}

\begin{Remark} \label{rem:nonintegralcategoryO}
	Although we have only discussed minimal tilting complexes for simple $\mathfrak{g}$-modules and Verma modules of integral highest weight, one can get similar results for $\mathfrak{g}$-modules with non-integral highest weight using W.\ Soergel's results from \cite{SoergelBlocksCategoryO}.
\end{Remark}

\subsection{Affine Kac-Moody algebras} \label{subsec:KacMoody}

Let us keep the notation from the previous subsection and additionally assume that $\mathfrak{g}$ is a complex \emph{simple} Lie algebra.
Recall that we fix a Borel subalgebra $\mathfrak{b}$ and a Cartan subalgebra $\mathfrak{h} \subseteq \mathfrak{b}$ of $\mathfrak{g}$.
We consider the affine Kac-Moody Lie algebra
\[ \mathfrak{\tilde g} = ( \C[t^{\pm1}] \otimes_\C \mathfrak{g} ) \oplus \C c \oplus \C d \]
as in \cite[Section 6]{TanisakiCharacterFormulas} and its subalgebras
\[ \mathfrak{\tilde b} = \mathfrak{b} \oplus ( t \C[t] \otimes \mathfrak{g} ) \oplus \C c \oplus \C d \qquad \text{and} \qquad \mathfrak{\tilde h} = \mathfrak{h} \oplus \C c \oplus \C d , \]
and we view $\mathfrak{h}^*$ as a subspace of $\mathfrak{\tilde h}^*$, with the convention that $\lambda(c) = \lambda(d) = 0$ for all $\lambda \in \mathfrak{h}^*$.
The root system $\tilde \Phi$ of $\mathfrak{\tilde g}$ is the disjoint union of the sets $\tilde \Phi_\mathrm{re}$ of real roots and $\tilde \Phi_\mathrm{im}$ of imaginary roots.
We denote by $\tilde \Phi^+$ the set of positive roots corresponding to $\mathfrak{\tilde b}$, by $\tilde \Phi_\mathrm{re}^+ = \tilde \Phi_\mathrm{re} \cap \tilde \Phi^+$ the set of positive real roots and by $\tilde \Pi = \Pi \sqcup \{ \alpha_0 \}$ the set of simple roots, as in \cite[Section 6]{TanisakiCharacterFormulas} or \cite[Section 13.1]{KumarKacMoody}.
The invariant bilinear form $( - \, , - )$ on $\mathfrak{g}$ can be extended to an invariant symmetric bilinear form on $\mathfrak{\tilde g}$ (see Lemma 13.1.8 in \cite{KumarKacMoody}),
and as before, we can define the coroot $\alpha^\vee$ corresponding to a root $\alpha \in \tilde \Phi_\mathrm{re}$ and the reflection $s_\alpha \in \GL(\mathfrak{\tilde h}^*)$ with $s_\alpha(\lambda) = \lambda - \lambda( \alpha^\vee ) \cdot \alpha$ for all $\lambda \in \mathfrak{\tilde h}^*$.
The \emph{(affine) Weyl group} of $\mathfrak{\tilde g}$ is the subgroup $\tilde W = \langle s_\alpha \mid \alpha \in \tilde \Pi \rangle$ of $\GL(\mathfrak{\tilde h}^*)$; it is a Coxeter group with set of simple reflections $\tilde S = \{ s_\alpha \mid \alpha \in \tilde \Pi \}$ and it is canonically isomorphic to the semidirect product $W \rtimes \Z\Phi^\vee$, where $W$ denotes the Weyl group of $\mathfrak{g}$ and $\Z\Phi^\vee$ is the coroot lattice of $\mathfrak{g}$ (see Proposition 13.1.7 in \cite{KumarKacMoody}).
We define $\tilde \rho \in \mathfrak{\tilde h}^*$ by $\tilde \rho(\alpha^\vee) = 1$ for all $\alpha \in \tilde \Pi$ and $\tilde \rho(d) = 0$ and consider the \emph{dot action} of $\tilde W$ on $\mathfrak{\tilde h}^*$ given by
\[ w \Cdot \lambda = w( \lambda + \tilde \rho ) - \tilde \rho \]
for all $w \in \tilde W$ and $\lambda \in \mathfrak{\tilde h}^*$.
We say that $\lambda \in \mathfrak{\tilde h}^*$ has \emph{level} $e \in \C$ if $e = ( \lambda + \tilde \rho )(c)$ and we set
\[ \lambda^\prime \coloneqq - \lambda - 2 \tilde \rho , \]
so that $\lambda^\prime$ has level $-e$ when $\lambda$ has level $e$ and $(w \Cdot \lambda)^\prime = w\Cdot\lambda^\prime$ for all $w \in \tilde W$.
The sets of \emph{integral} and \emph{dominant} weights are defined by
\[ \tilde X = \{ \lambda \in \mathfrak{\tilde h}^* \mid \lambda(\alpha^\vee) \in \Z \text{ for all } \alpha \in \tilde \Pi \} \qquad \text{and} \qquad \tilde X^+ = \{ \lambda \in \tilde X \mid \lambda(\alpha^\vee) \geq 0 \text{ for all } \alpha \in \tilde \Pi \} , \]
respectively, and we note that for every integral weight $\mu \in \tilde X$ of positive (or negative) level, there is a unique weight $\nu \in \tilde X^+ \setminus\{0\}$ such that $\mu$ belongs to the $\tilde W$-orbit of $\nu - \tilde \rho$ (respectively $-\nu - \tilde \rho$) with respect to the dot action; see for instance Section 6.3 in \cite{BrundanStroppel}.

Now let us consider the category $\mathcal{\tilde O}$ of $\mathfrak{\tilde g}$-modules that admit a weight space decomposition with finite-dimensional weight spaces with respect to $\mathfrak{\tilde h}$ and whose set of weights is contained in the union of finitely many sets of the form $\{ \mu \in \mathfrak{\tilde h}^* \mid \mu \leq \lambda \}$ for $\lambda \in \mathfrak{\tilde h}^*$, where we write $\leq$ for the partial order on $\mathfrak{\tilde h}^*$ that is defined by
\[ \mu \leq \lambda \qquad \text{if and only if} \qquad \lambda - \mu \in \sum_{\alpha \in \tilde\Pi} \Z_{\geq 0} \cdot \alpha . \]
For all $\lambda \in \mathfrak{\tilde h}^*$, the Verma module $\Delta_\lambda$ of highest weight $\lambda$, the dual Verma module $\nabla_\lambda$ and the unique simple quotient $L_\lambda$ of $\Delta_\lambda$ (which is also the unique simple submodule of $\nabla_\lambda$) are objects of $\mathcal{\tilde O}$.
Even though a $\mathfrak{\tilde g}$-module $M$ in $\mathcal{\tilde O}$ does not need to admit a finite composition series, the composition multiplicities $[ M : L_\mu ]$ are still (well-defined and) finite for $\mu \in \mathfrak{\tilde h}^*$ (see Lemma 2.1.9 in \cite{KumarKacMoody}).
Now let us fix
\[ \lambda \in ( - \tilde \rho + \tilde X^+ ) \cup ( - \tilde \rho - \tilde X^+ ) \qquad \text{with} \qquad \lambda \neq - \tilde \rho \]
and write $\mathcal{\tilde O}_\lambda$ for the full subcategory of $\mathcal{\tilde O}$ whose objects are the $\mathfrak{\tilde g}$-modules $M$ in $\mathcal{\tilde O}$ such that $[ M : L_\mu ] = 0$ for all $\mu \in \mathfrak{\tilde h}^* \setminus \tilde W \Cdot \lambda$.
Furthermore, for $J \subseteq \tilde S$, we write $\mathfrak{p}_J$ for the Levi subalgebra of $\mathfrak{\tilde g}$ corresponding to $J$ and we let $\mathcal{\tilde O}^{\mathfrak{p}_J}$ (or $\mathcal{\tilde O}^{\mathfrak{p}_J}_\lambda$) be the full subcategory of $\mathcal{\tilde O}$ (or $\mathcal{\tilde O}_\lambda$) whose objects are the locally $\mathfrak{p}_J$-finite $\mathfrak{\tilde g}$-modules in $\mathcal{\tilde O}$ (or $\mathcal{\tilde O}_\lambda$).
As in the previous subsection, we write $\Delta^{\mathfrak{p}_J}_\mu$ for the largest locally $\mathfrak{p}_J$-finite quotient of the Verma module $\Delta_\mu$ of highest weight $\mu \in \mathfrak{\tilde h}^*$ and $\nabla^{\mathfrak{p}_J}_\mu$ for the largest locally $\mathfrak{p}_J$-finite submodule of $\nabla_\mu$, and we call these $\mathfrak{\tilde g}$-modules the \emph{generalized Verma module} and the \emph{generalized dual Verma module} of highest weight $\mu$ with respect to $\mathfrak{p}_J$.

Now let $I \subseteq \tilde S$ such that $\Stab_{\tilde W}(\lambda) = \tilde W_I$.
If $\lambda$ has positive (or negative) level then we parameterize the set of isomorphism classes of simple $\mathfrak{\tilde g}$-modules in $\mathcal{\tilde O}_\lambda$ by $X_\lambda \coloneqq \tilde W^I w_I$ (respectively $X_\lambda \coloneqq \tilde W^I$) via the map $w \mapsto L_{w\Cdot\lambda}$, and this parametrization restricts to a bijection between the set of isomorphism classes of simple $\mathfrak{\tilde g}$-modules in $\mathcal{\tilde O}_\lambda^{\mathfrak{p}_J}$ and the set $X_\lambda^J \coloneqq \prescript{J}{}{\tilde W}^I_\mathrm{reg} w_I$ (respectively $X_\lambda^J \coloneqq w_J\prescript{J}{}{\tilde W}^I_\mathrm{reg}$) of representatives for the regular double cosets in $\tilde W_J \backslash \tilde W / \tilde W_I$.
If $\lambda$ has negative level then $\mathcal{\tilde O}_\lambda$ is a highest weight category in the sense of Definition \ref{def:HWC} according to Theorem 6.4 in \cite{BrundanStroppel}, but if $\lambda$ has positive level then the $\mathfrak{\tilde g}$-modules in $\mathcal{\tilde O}_\lambda$ generally do not have finite composition series.
(The category $\mathcal{\tilde O}_\lambda$ is an \emph{upper finite highest weight category} in the sense of \cite[Definition 3.34]{BrundanStroppel} when $\lambda$ has positive level, see again \cite[Theorem 6.4]{BrundanStroppel}.)
In order to overcome this problem, we consider truncations of the category $\mathcal{\tilde O}_\lambda^{\mathfrak{p}_J}$ as follows:
If $\lambda$ has negative level then for $w \in X_\lambda^J$, we write $\mathcal{\tilde O}^{\mathfrak{p}_J}_{\lambda,\leq w}$ for the Serre subcategory of $\mathcal{\tilde O}^{\mathfrak{p}_J}_{\lambda}$ generated by the simple $\mathfrak{\tilde g}$-modules $L_{x\Cdot\lambda}$ with $x \in X_\lambda^{J, \leq w}$, where we set
\[ X_\lambda^{J, \leq w} \coloneqq \{ y \in X_\lambda^J \mid y \leq w \} . \]
Then $\mathcal{\tilde O}^{\mathfrak{p}_J}_{\lambda,\leq w}$ is a highest weight category with weight poset $( X_\lambda^{J, \leq w} , \leq )$ by Lemma 3.7 in \cite{ShanVaragnoloVasserot}, where $\leq$ denotes the Bruhat order and the standard objects and costandard objects are given by $\Delta^{\mathfrak{p}_J}_{x\Cdot\lambda}$ and $\nabla^{\mathfrak{p}_J}_{x\Cdot\lambda}$, respectively, for $x \in X_\lambda^{J, \leq w}$.
If $\lambda$ has positive level then for $w \in X_\lambda^J$, we write $\mathcal{\tilde O}^{\mathfrak{p}_J}_{\lambda,\leq w}$ for the Serre quotient category of $\mathcal{\tilde O}^{\mathfrak{p}_J}_{\lambda}$ by the Serre subcategory generated by the simple $\mathfrak{\tilde g}$-modules $L_{x\Cdot\lambda}$ for $x \in X_\lambda^J$ with $x \nleq w$, so that the simple objects of $\mathcal{\tilde O}^{\mathfrak{p}_J}_{\lambda,\leq w}$ are naturally parameterized by the set
\[ X_\lambda^{J,\leq w} \coloneqq \{ y \in X_\lambda^J \mid y \leq w \} . \]
Then $\mathcal{\tilde O}^{\mathfrak{p}_J}_{\lambda,\leq w}$ is a highest weight category with weight poset $( X_\lambda^{J, \leq w} , \geq )$ by Proposition 3.9 in \cite{ShanVaragnoloVasserot}, where $\geq$ denotes the opposite Bruhat order and the standard objects and costandard objects are given by $\Delta^{\mathfrak{p}_J}_{x\Cdot\lambda}$ and $\nabla^{\mathfrak{p}_J}_{x\Cdot\lambda}$, respectively, for $x \in X_\lambda^{J, \leq w}$.

\begin{Remark}
	For $\lambda$ of negative level, the fact that all of the truncated categories $\mathcal{\tilde O}_{\lambda,\leq w}^{\mathfrak{p}_J}$ are highest weight categories implies that $\mathcal{\tilde O}_\lambda^{\mathfrak{p}_J}$ is itself a highest weight category with weight poset $(X_\lambda^J,\leq)$ by Definition 3.50 and Corollary 3.64 in \cite{BrundanStroppel}, where as before, we write $\leq$ for the Bruhat order.
	For $\lambda$ of positive level, the category $\mathcal{\tilde O}_\lambda^{\mathfrak{p}_J}$ is an \emph{upper finite highest weight category} with weight poset $(X_\lambda^J,\geq)$ in the sense of \cite[Definition 3.34]{BrundanStroppel} because every simple $\mathfrak{\tilde g}$-module in $\mathcal{\tilde O}_\lambda^{\mathfrak{p}_J}$ has a projective cover in $\mathcal{\tilde O}_\lambda^{\mathfrak{p}_J}$ which admits a filtration by generalized Verma modules; see \cite[Section 4]{RochaCaridiWallach}.
\end{Remark}

We are now ready to discuss minimal tilting complexes in the category $\mathcal{\tilde O}$.
Let us fix $\lambda \in - \tilde \rho - \tilde X^+$ with stabilizer $\Stab_{\tilde W}(\lambda) = \tilde W_I$ for $I \subsetneq \tilde S$,
and for $x \in X_\lambda = \tilde W^I$, let us write $T_{x\Cdot\lambda}$ for the indecomposable tilting $\mathfrak{\tilde g}$-module of highest weight $x \Cdot \lambda$.
For $J \subsetneq \tilde S$ and $x \in X_\lambda^J$, we further denote by $T^{\mathfrak{p}_J}_{x\Cdot\lambda}$ the indecomposable tilting $\mathfrak{\tilde g}$-module of highest weight $x\Cdot\lambda$ in $\mathcal{\tilde O}_\lambda^{\mathfrak{p}_J}$.

\begin{Theorem} \label{thm:minimaltiltingcomplexesKacMoody}
	Let $I,J \subsetneq \tilde S$ and let $\lambda \in - \tilde \rho - \tilde X^+$ such that $\Stab_{\tilde W}(\lambda) = \tilde W_I$.
	Then, for all $x,y \in X_\lambda^J$, we have
	\[ \sum_{i \in \Z} \big[ C_\mathrm{min}( \Delta^{\mathfrak{p}_J}_{x\Cdot\lambda} )_i : T^{\mathfrak{p}_J}_{y\Cdot\lambda} \big]_\oplus \cdot v^i = m_I^{x^{-1}w_J,y^{-1}w_J} \]
	and
	\[ \sum_{i \in \Z} \big[ C_\mathrm{min}( L_{x\Cdot\lambda} )_i : T^{\mathfrak{p}_J}_{y\Cdot\lambda} \big]_\oplus \cdot v^i = \sum_{z \in X_\lambda^J} \overline{ n^I_{z^{-1},x^{-1}} } \cdot m_I^{ z^{-1} w_J , y^{-1} w_J } \]
	where we take minimal tilting complexes in the highest weight category $\mathcal{\tilde O}^{\mathfrak{p}_J}_\lambda$.
\end{Theorem}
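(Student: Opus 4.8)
The plan is to follow the proof of Theorem \ref{thm:minimaltiltingcomplexescategoryO} as closely as possible, substituting for its two finite-type inputs --- Theorem 2.2 of \cite{GruberSingularParabolic} and the combination of Theorem 8.1 of \cite{CoulembierMazorchukDualities} with Proposition 3.9 of \cite{SoergelKL} --- their affine Kac--Moody counterparts. First I would reduce to a finite truncation. Since $\lambda \in - \tilde\rho - \tilde X^+$ has negative level, $\mathcal{\tilde O}^{\mathfrak p_J}_\lambda$ is a highest weight category in the sense of Definition \ref{def:HWC} with lower finite weight poset $(X_\lambda^J,\leq)$, $\leq$ being the Bruhat order, so the machinery of Sections \ref{sec:minimaltiltingcomplexes} and \ref{sec:standardsimple} applies. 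Both $\Delta^{\mathfrak p_J}_{x\Cdot\lambda}$ and $L_{x\Cdot\lambda}$ lie in $\mathcal{\tilde O}^{\mathfrak p_J}_{\lambda,\leq x}$, whose weight poset $X_\lambda^{J,\leq x}$ is a Bruhat interval and hence finite; by Lemma \ref{lem:minimaltiltingcomplexorderideal} their minimal tilting complexes and all terms thereof already live in this truncation, and by Proposition \ref{prop:derivedtruncated} the $\Ext$-groups between simple and costandard objects labelled by elements of $X_\lambda^{J,\leq x}$ are the same whether computed in $\mathcal{\tilde O}^{\mathfrak p_J}_\lambda$ or in $\mathcal{\tilde O}^{\mathfrak p_J}_{\lambda,\leq x}$. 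So I may work throughout in the finite highest weight category $\mathcal{\tilde O}^{\mathfrak p_J}_{\lambda,\leq x}$ and take $\Gamma = X_\lambda^{J,\leq x}$ in Lemma \ref{lem:minimaltiltingcomplexstandard} and Corollary \ref{cor:minimaltiltingcomplexsimple}; when $y \not\leq x$ both sides of the claimed identities vanish, so this costs nothing.

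Next I would assemble the two combinatorial inputs. The first is the parabolic--singular affine Kazhdan--Lusztig formula: via Beilinson--Bernstein--Kashiwara--Tanisaki localization at negative level and the affine Kazhdan--Lusztig conjecture in parabolic--singular form (using \cite{ShanVaragnoloVasserot} and the normalization of \cite{SoergelKL}), one has $\sum_{i\geq 0}\dim\Ext^i_{\mathcal{\tilde O}^{\mathfrak p_J}}\big(L_{x\Cdot\lambda},\nabla^{\mathfrak p_J}_{z\Cdot\lambda}\big)\cdot v^i = n^I_{z^{-1},x^{-1}}$ for all $x,z \in X_\lambda^J$. The second concerns the Ringel dual: one needs that $(\mathcal{\tilde O}^{\mathfrak p_J}_{\lambda,\leq w})^\prime$ is again a (truncation of a) parabolic--singular affine category $\mathcal O$, now of the \emph{opposite} level and with the roles of the parabolic datum $J$ and the singular datum $I$ interchanged (the affine analogue of Theorem 8.1 in \cite{CoulembierMazorchukDualities}), and that its simple-to-costandard $\Ext$-groups are governed by \emph{inverse spherical} parabolic affine Kazhdan--Lusztig polynomials; tracking the weight labels under $R^\nabla_{\mathcal{\tilde O}^{\mathfrak p_J}_\lambda}$ then gives $\sum_{i\geq 0}\dim\Ext^i_{(\mathcal{\tilde O}^{\mathfrak p_J}_{\lambda,\leq w})^\prime}\big(L^\prime_{y\Cdot\lambda},\nabla^\prime_{z\Cdot\lambda}\big)\cdot v^i = m_I^{z^{-1}w_J,y^{-1}w_J}$.

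With these in hand, the first formula is immediate: by Lemma \ref{lem:minimaltiltingcomplexstandard} (noting $C_\mathrm{min}(\Delta^{\mathfrak p_J}_{x\Cdot\lambda})$ is concentrated in non-negative degrees by Corollary \ref{cor:Deltafiltrationtiltingresolution}), $\sum_i [C_\mathrm{min}(\Delta^{\mathfrak p_J}_{x\Cdot\lambda})_i : T^{\mathfrak p_J}_{y\Cdot\lambda}]_\oplus v^i = \sum_i \dim\Ext^i_{(\mathcal{\tilde O}^{\mathfrak p_J}_{\lambda,\leq x})^\prime}(L^\prime_{y\Cdot\lambda},\nabla^\prime_{x\Cdot\lambda}) v^i = m_I^{x^{-1}w_J,y^{-1}w_J}$. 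For the second formula I would apply Corollary \ref{cor:minimaltiltingcomplexsimple} in $\mathcal{\tilde O}^{\mathfrak p_J}_{\lambda,\leq x}$, feeding in $p_{z\Cdot\lambda,x\Cdot\lambda} = n^I_{z^{-1},x^{-1}}$ from the first input and $p^\prime_{z\Cdot\lambda,y\Cdot\lambda} = m_I^{z^{-1}w_J,y^{-1}w_J}$ from the first formula just proved (applied to $\Delta^{\mathfrak p_J}_{z\Cdot\lambda}$). The parity hypothesis of Corollary \ref{cor:minimaltiltingcomplexsimple} holds with length function on $\Gamma = X_\lambda^{J,\leq x}$ given by (an affine shift of) the restriction of the Coxeter length of $\tilde W$, because $n^I_{z^{-1},x^{-1}} \in v^{\ell(x)-\ell(z)}\Z[v^{\pm 2}]$ and $m_I^{z^{-1}w_J,y^{-1}w_J} \in v^{\ell(y)-\ell(z)}\Z[v^{\pm 2}]$ for all $x,y,z \in X_\lambda^J$ --- the parity vanishing of the relevant affine parabolic Kazhdan--Lusztig polynomials, i.e.\ the affine analogue of Remark 3.2 in \cite{SoergelKL}. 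Corollary \ref{cor:minimaltiltingcomplexsimple} then yields $\sum_i [C_\mathrm{min}(L_{x\Cdot\lambda})_i : T^{\mathfrak p_J}_{y\Cdot\lambda}]_\oplus v^i = \sum_{z\leq x}\overline{n^I_{z^{-1},x^{-1}}}\cdot m_I^{z^{-1}w_J,y^{-1}w_J}$, which equals the sum over all $z \in X_\lambda^J$ since $n^I_{z^{-1},x^{-1}} = 0$ unless $z \leq x$.

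The hard part will be the Ringel-dual identification (the second input). Because $\tilde W$ has no longest element, the finite-type trick of rewriting the Ringel-dual $\Ext$-groups as an anti-spherical polynomial $n^I_{w_I y^{-1} w_J w_0,\,w_I x^{-1} w_J w_0}$ and then invoking Soergel's Proposition 3.9 is simply unavailable; one must instead work with an affine Ringel duality that flips the level and swaps the parabolic and singular data, and show directly that in the resulting opposite-level category the groups $\Ext^\bullet(L,\nabla)$ are computed by inverse spherical parabolic affine Kazhdan--Lusztig polynomials. A secondary, more routine obstacle is confirming that the parabolic--singular affine Kazhdan--Lusztig formula for $\Ext^\bullet(L,\nabla)$ used in the first input is available in the literature at the full level of generality required (arbitrary $I, J \subsetneq \tilde S$ and arbitrary singular negative-level $\lambda$).
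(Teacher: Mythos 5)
Your proposal follows the paper's proof closely and correctly identifies the two combinatorial inputs and the central difficulty. You are right that the Ringel-dual identification cannot be obtained by the finite-type trick of conjugating by $w_0$; the paper resolves this exactly as you anticipate, by invoking Proposition 3.9 of \cite{ShanVaragnoloVasserot}, which establishes that the Ringel dual $(\mathcal{\tilde O}^{\mathfrak{p}_J}_{\lambda,\leq w})^\prime$ is equivalent to the truncated positive-level category $\mathcal{\tilde O}^{\mathfrak{p}_J}_{\lambda^\prime,\leq w_Jww_I}$ with $\lambda^\prime=-\lambda-2\tilde\rho$, and tracks the standard objects via $R^\nabla(\Delta^{\mathfrak p_J}_{x\Cdot\lambda}) \cong \nabla^{\mathfrak p_J}_{w_Jxw_I\Cdot\lambda^\prime}$. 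Your concern about whether the parabolic--singular affine Kazhdan--Lusztig formula for $\Ext^\bullet(L,\nabla)$ is available in full generality is also well placed: the paper cites Theorem 3.3 of \cite{GruberSingularParabolic} (the author's own work), which proves both the anti-spherical formula at negative level and the spherical formula at the opposite positive level, and then transfers these to the truncations $\mathcal{\tilde O}^{\mathfrak p_J}_{\lambda,\leq w}$ and $\mathcal{\tilde O}^{\mathfrak p_J}_{\lambda',\leq w}$ via Theorems 3.42 and 3.59 of \cite{BrundanStroppel}. With those references supplied, the remaining steps --- the application of Lemma \ref{lem:minimaltiltingcomplexstandard}, the parity hypothesis from Remark 3.2 in \cite{SoergelKL}, and Corollary \ref{cor:minimaltiltingcomplexsimple} --- are exactly as you describe, so your outline is correct and matches the paper's argument in substance; the only missing ingredient was the specific literature.
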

\begin{proof}
	Recall that we write $\lambda^\prime = - \lambda - 2 \tilde \rho$.
	By Theorem 3.3 in \cite{GruberSingularParabolic}, we have
	\[ \sum_{i \geq 0} \dim \Ext_{\mathcal{\tilde O}^{\mathfrak{p}_J}}^i( L_{y\Cdot\lambda} , \nabla^{\mathfrak{p}_J}_{x\Cdot\lambda} ) \cdot v^i = n^I_{x^{-1},y^{-1}} \]
	for all $x,y \in X_\lambda^J$ and
	\[ \sum_{i \geq 0} \dim \Ext_{\mathcal{\tilde O}^{\mathfrak{p}_J}}^i( L_{y\Cdot\lambda^\prime} , \nabla^{\mathfrak{p}_J}_{x\Cdot\lambda^\prime} ) \cdot v^i = m_I^{w_Ix^{-1},w_Iy^{-1}} \]
	for all $x,y \in X_{\lambda^\prime}^J$, and the same formulas hold in the truncated categories $\mathcal{\tilde O}^{\mathfrak{p}_J}_{\lambda,\leq w}$ and $\mathcal{\tilde O}^{\mathfrak{p}_J}_{\lambda^\prime,\leq w}$ by Theorems 3.42 and 3.59 in \cite{BrundanStroppel}, for $x,y \leq w$.
	Furthermore, the Ringel dual $(\mathcal{\tilde O}^{\mathfrak{p}_J}_{\lambda,\leq w})^\prime$ of $\mathcal{\tilde O}^{\mathfrak{p}_J}_{\lambda,\leq w}$ is equivalent to $\mathcal{\tilde O}^{\mathfrak{p}_J}_{\lambda^\prime,\leq w_Jww_I}$ for all $w \in X_\lambda^J$, and for $x \in X_\lambda^{J,\leq w}$, the Ringel duality functor
	\[ R_{\mathcal{\tilde O}_{\lambda,\leq w}^{\mathfrak{p}_J}}^\nabla \colon \mathcal{\tilde O}_{\lambda,\leq w}^{\mathfrak{p}_J} \longrightarrow ( \mathcal{\tilde O}_{\lambda,\leq w}^{\mathfrak{p}_J} )^\prime \simeq \mathcal{\tilde O}_{\lambda^\prime,\leq w_J w w_I}^{\mathfrak{p}_J} \]
	satisfies
	\[ R_{\mathcal{\tilde O}_{\lambda,\leq w}^{\mathfrak{p}_J}}^\nabla\big( \Delta^{\mathfrak{p}_J}_{x \Cdot\lambda} \big) \cong \nabla^{\mathfrak{p}_J}_{w_Jxw_I\Cdot\lambda^\prime} \]
	by Proposition 3.9 in \cite{ShanVaragnoloVasserot}.
	Thus the dimensions of the $\Ext$-groups between the simple object corresponding to $y$ and the costandard object corresponding to $x$ in $( \mathcal{\tilde O}^{\mathfrak{p}_J}_{\lambda,\leq w} )^\prime$  are given by
	\[ \sum_{i \geq 0} \dim \Ext_{ \mathcal{\tilde O}_{\lambda^\prime,\leq w_J w w_I}^{\mathfrak{p}_{J}} }^i\big( L_{w_Jyw_I\Cdot\lambda^\prime} , \nabla^{\mathfrak{p}_J}_{w_Jxw_I\Cdot\lambda^\prime} \big) \cdot v^i = m_I^{x^{-1}w_J,y^{-1}w_J} \]
	for $x,y \in X_\lambda^{J,\leq w}$, and the first claim is immediate from Lemma \ref{lem:minimaltiltingcomplexstandard}.
	The second claim follows from Corollary \ref{cor:minimaltiltingcomplexsimple} because
	\[ n^I_{z^{-1},x^{-1}} \in v^{\ell(x) - \ell(z)} \cdot \Z[v^{\pm2}] \qquad \text{and} \qquad m_I^{z^{-1}w_J,y^{-1}w_J} \in v^{\ell(y) - \ell(z)} \cdot \Z[v^{\pm2}] \]
	for $x,y,z \in X_\lambda^{J,\leq w}$ by Remark 3.2 in \cite{SoergelKL}.
\end{proof}

\begin{Remark}
	For $\lambda \in - \tilde \rho + \tilde X^+$ of \emph{positive} level and $I , J \subsetneq \tilde S$ such that $\Stab_{\tilde W}(\lambda) = W_I$, the category $\mathcal{\tilde O}_\lambda$ is generally not a highest weight category in the sense of Definition \ref{def:HWC} and may not contain any tilting $\mathfrak{\tilde g}$-modules.
	Nevertheless, we can compute the multiplicities of indecomposable tilting objects in minimal tilting complexes of simple objects and standard objects in the highest weight categories $\mathcal{\tilde O}_{\lambda,\leq w}^{\mathfrak{p}_J}$, for $w \in X_\lambda^J$.
	Specifically, writing $T_{y\Cdot\lambda}^{\mathfrak{p}_J,\leq w}$ for the indecomposable tilting object corresponding to $y \in X_\lambda^{J,\leq w}$ in $\mathcal{\tilde O}_{\lambda,\leq w}^{\mathfrak{p}_J}$, we obtain
	\[ \sum_{i \in \Z} \big[ C_\mathrm{min}( \Delta^{\mathfrak{p}_J}_{x\Cdot\lambda} )_i : T^{\mathfrak{p}_J, \leq w}_{y\Cdot\lambda} \big]_\oplus \cdot v^i = n^I_{w_Ix^{-1}w_J,w_Iy^{-1}w_J} \]
	and
	\[ \sum_{i \in \Z} \big[ C_\mathrm{min}( L_{x\Cdot\lambda} )_i : T^{\mathfrak{p}_J,\leq w}_{y\Cdot\lambda} \big]_\oplus \cdot v^i = \sum_{z \in X_\lambda^{J,\leq w}} \overline{ m_I^{w_Iz^{-1},w_Ix^{-1}} } \cdot n^I_{w_Ix^{-1}w_J,w_Iy^{-1}w_J} \]
	for all $x,y \in X_\lambda^{J,\leq w}$, where we take minimal tilting complexes in the highest weight category $\mathcal{\tilde O}_{\lambda,\leq w}^{\mathfrak{p}_J}$.
\end{Remark}

\begin{Remark} \label{rem:nonintegralKacMoody}
	From Theorem \ref{thm:minimaltiltingcomplexesKacMoody}, we can also obtain some information about minimal tilting complexes of simple $\mathfrak{\tilde g}$-modules and Verma modules of non-integral highest weight, using P.\ Fiebig's combinatorial description of blocks in BGG categories for Kac-Moody algebras \cite{FiebigCombinatoricsKacMoody}.
	More specifically, for $\lambda \in \mathfrak{h}^*$ with $(\lambda + \tilde \rho)(c) \in \Q_{<0}$, consider
	\[ \tilde \Phi_{\lambda,\mathrm{re}} = \{ \alpha \in \tilde \Phi_\mathrm{re} \mid \lambda(\alpha^\vee) \in \Z \} \qquad \text{and} \qquad \tilde \Phi_{\lambda,\mathrm{re}}^+ = \tilde \Phi_{\lambda,\mathrm{re}} \cap \tilde \Phi^+ \]
	and let $\tilde W_\lambda = \langle s_\alpha \mid \alpha \in \Phi_{\lambda,\mathrm{re}} \rangle$ be the integral Weyl group of $\lambda$.
	Then $\tilde W_\lambda$ is a Coxeter group whose simple reflections $\tilde S_\lambda$ are the $s_\alpha$ with $\alpha \in \tilde \Phi_{\lambda,\mathrm{re}}$ that permute the set $\tilde \Phi_{\lambda,\mathrm{re}}^+ \setminus \{ \alpha \}$.
	We write $\mathcal{\tilde O}_\lambda$ for the full subcategory of $\mathcal{\tilde O}$ whose objects are the $\mathfrak{\tilde g}$-modules in $\mathcal{\tilde O}$ all of whose composition factors have highest weight in $W_\lambda \Cdot \lambda$.
	If $\tilde W_\lambda$ is isomorphic to the Weyl group of some affine Kac-Moody Lie algebra then Theorem 11 in \cite{FiebigCombinatoricsKacMoody} implies that the multiplicities of indecomposable tilting $\mathfrak{\tilde g}$-modules appearing in the terms of minimal tilting complexes of simple $\mathfrak{\tilde g}$-modules and (generalized) Verma modules in $\mathcal{\tilde O}_\lambda$ (and its parabolic versions $\mathcal{\tilde O}_\lambda^{\mathfrak{p}_J}$) are governed by parabolic Kazhdan-Lusztig polynomials with respect to $\tilde W_\lambda$ and the parabolic subgroup $\Stab_{\tilde W}(\lambda)$, as in Theorem \ref{thm:minimaltiltingcomplexesKacMoody}.
\end{Remark}

\subsection{Quantum groups at roots of unity} \label{subsec:quantum}

We continue to use the notation from the two previous subsections; in particular $\mathfrak{g}$ is a complex simple Lie algebra and $\mathfrak{\tilde g}$ is the corresponding affine Kac-Moody algebra.
Let us fix $\ell>0$ and write $U_\zeta=U_\zeta(\mathfrak{g})$ for the quantum group corresponding to $\mathfrak{g}$ specialized at a primitive $\ell$-th root of unity $\zeta \in \C$, as defined for instance in Section 7 in \cite{TanisakiCharacterFormulas}, and let $\Rep(U_\zeta)$ be the category of finite-dimensional $U_\zeta$-modules that admit a weight space decomposition (again as in \cite{TanisakiCharacterFormulas}).
In the following, the objects of $\Rep(U_\zeta)$ will simply be referred to as $U_\zeta$-modules, and we write $\Hom_{U_\zeta}(-\,,-)$ and $\Ext_{U_\zeta}^i(-\,,-)$ for the $\Hom$-functors and the $\Ext$-functors in $\Rep(U_\zeta)$.
For $\lambda \in X^+$ (the set of dominant integral weights with respect to the root system $\Phi$ of $\mathfrak{g}$ and the base $\Pi \subseteq \Phi$), we write $\Delta^\zeta_\lambda$ for the Weyl module, $\nabla^\zeta_\lambda$ for the induced module and $L^\zeta_\lambda$ for the simple $U_\zeta$-module of highest weight $\lambda$ over $U_\zeta$, all as in Sections H.9--11 in \cite{Jantzen}.
Then for all $\lambda,\mu \in X^+$ and $i \in \Z$, we have
\[ \Ext_{U_\zeta}^i( \Delta^\zeta_\lambda , \nabla^\zeta_\mu ) \cong \begin{cases} \C & \text{if } \lambda = \mu \text{ and } i = 0 , \\ 0 & \text{otherwise} , \end{cases} \]
and it follows that $\Rep(U_\zeta)$ is a highest weight category with weight poset $(X^+,\leq)$, where as before, we write $\leq$ for the partial order on $X^+$ that is induced by the base $\Pi$.
A proof of the $\Ext$-vanishing property above can be found for instance in Theorem 3.1 in \cite{AndersenStroppelTubbenhauerAdditionalNotes}.
There, some restrictions are imposed on $\ell$, but these are not necessary, as remarked in \cite[Assumption 2.11]{AndersenStroppelTubbenhauerAdditionalNotes}.
See also Proposition 4.1 in \cite{AndersenLinkageQuantum} and Theorem 5.5 in \cite{RyomHansenQuantumKempf}.
Now for $\lambda \in X^+$, let us write $T^\zeta_\lambda$ for the indecomposable tilting $U_\zeta$-module of highest weight $\lambda$.
In order to determine the multiplicities of indecomposable tilting $U_\zeta$-modules in the minimal tilting complexes of simple $U_\zeta$-modules and of Weyl modules, we next discuss the decomposition of $\Rep(U_\zeta)$ into \emph{linkage classes}.

Recall that we have fixed an invariant bilinear form $(-\,,-)$ on $\mathfrak{g}$ such that $(h_\alpha,h_\alpha) = 2$ for any long root $\alpha \in \Phi$, where for $\lambda \in \mathfrak{h}^*$, we define $h_\lambda \in \mathfrak{h}$ by $\lambda = (h_\lambda,-)$.
For $\alpha \in \Phi$, let $\check \alpha \in \mathfrak{h}^*$ be the unique element in $\mathfrak{h}^*$ with $h_{\check \alpha} = \alpha^\vee$ and write $\check \Phi = \{ \check \alpha \mid \alpha \in \Phi \}$ for the preimage of the dual root system $\Phi^\vee$ of $\Phi$ under the isomorphism $\mathfrak{h}^* \to \mathfrak{h}$ with $\lambda \mapsto h_\lambda$.
Further let $D$ be the square of the ratio of the length of the highest root $\alpha_\mathrm{h} \in \Phi^+$ by the length of the highest short root $\alpha_\mathrm{hs} \in \Phi^+$ (i.e.\ the unique root in $\Phi$ whose coroot $\alpha_\mathrm{hs}^\vee$ is the highest root in $\Phi^\vee$), that is $D=1$ if $\Phi$ is simply-laced, $D=2$ if $\Phi$ is of type $\mathrm{B}_n$, $\mathrm{C}_n$ or $\mathrm{F}_4$ and $D=3$ if $\Phi$ is of type $\mathrm{G}_2$.
Then our conventions imply that we have $\check \alpha = \alpha$ for all long roots $\alpha \in \Phi$ and $\check \beta = D \beta$ for all short roots in $\beta \in \Phi$; in particular the root lattice $Q \coloneqq \Z \Phi$ and the coroot lattice $\check Q \coloneqq \Z \check \Phi$ satisfy $D Q \subseteq \check Q \subseteq Q$.
For $e \in \R$, we consider the $e$-dilated dot action of $Q \rtimes W$ on $X_\R \coloneqq X \otimes_\Z \R$ defined by
\[ t_\gamma w \Cdot_e \lambda = w( \lambda + \rho ) - \rho + e \gamma \]
for $\gamma \in Q$, $w \in W$ and $\lambda \in X_\R$, where we write $t_\gamma$ for the image of $\gamma$ under the canonical embedding of $Q$ into $Q \rtimes W$.
The $e$-dilated dot action of $\check Q \rtimes W$ on $\mathfrak{h}^*$ is defined analogously, and we will denote it by the same symbol $\Cdot_e$.
The closure of the fundamental dominant $e$-alcove
\[ A_e = \{ x \in X_\R \mid 0 \leq (x+\rho)(\alpha^\vee) \text{ for all } \alpha \in \Pi \text{ and } (x+\rho)(\alpha_\mathrm{hs}^\vee) \leq e \} \]
is a fundamental domain for the $e$-dilated dot action of $Q \rtimes W$ on $X_\R$, and similarly, the closure of
\[ \check A_e = \{ x \in X_\R \mid 0 \leq (x+\rho)(\alpha^\vee) \text{ for all } \alpha \in \Pi \text{ and } (x+\rho)(\alpha_\mathrm{h}^\vee) \leq e \} \]
is a fundamental domain for the $e$-dilated dot action of $\check Q \rtimes W$ on $X_\R$.

Now let us set
\[ \ell^\prime = \begin{cases} \ell & \text{if } \ell \text{ is odd} , \\ \ell / 2 & \text{if } \ell \text{ is even} \end{cases} \qquad \text{and} \qquad r_\ell = \begin{cases} \ell^\prime & \text{if } D \text{ does not divide } \ell^\prime , \\ \ell^\prime / D & \text{if } D \text{ divides } \ell^\prime . \end{cases} \]
We further define
	\[ W_\ell = \begin{cases} Q \rtimes W & \text{if } D \text{ does not divide } \ell^\prime , \\ \check Q \rtimes W & \text{if } D \text{ divides } \ell^\prime \end{cases} \qquad \text{and} \qquad C_\ell = \begin{cases} A_{r_\ell} & \text{if } D \text{ does not divide } \ell^\prime , \\ \check A_{r_\ell} & \text{if } D \text{ divides } \ell^\prime , \end{cases} \]
	and for $\lambda \in \overline{C}_\ell \cap X$, we write $\Rep_\lambda( U_\zeta )$ for the full subcategory of $\Rep(U_\zeta)$ whose objects are the $U_\zeta$-modules all of whose composition factors have highest weight in $W_\ell \Cdot_{r_\ell} \lambda$.
	Then, according to Corollary 4.4 in \cite{AndersenLinkageQuantum} (see also Sections 3.17--19 in \cite{AndersenParadowskiFusion}), every $U_\zeta$-module $M$ decomposes as a direct sum
	\[ M = \bigoplus_{\lambda \in \overline{C}_\ell \cap X} \pr_\lambda M , \]
	where $\pr_\lambda M$ denotes the largest $U_\zeta$-submodule of $M$ that belongs to $\Rep_\lambda(U_\zeta)$, for $\lambda \in \overline{C}_\ell \cap X$.%
	\footnote{Note that \cite{AndersenParadowskiFusion} works with the convention that short roots (and not long roots) have length $\sqrt{2}$.
	This is why in our notation, if $D$ divides $\ell^\prime$ then \cite{AndersenParadowskiFusion} considers the $\ell^\prime$-dilated dot action of $\frac{1}{D} \check Q \rtimes W$ on $X_\R$ and not the $r_\ell = \ell^\prime / D$-dilated dot action of $\check Q \rtimes W$.
	It is straightforward to see that the isomorphism $\frac{1}{D} \check Q \rtimes W \to \check Q \rtimes W$ with $t_\gamma \mapsto t_{D\gamma}$ for $\gamma \in \frac{1}{D} \check Q$ and $w \mapsto w$ for $w \in W$ intertwines these actions.}
	The group $W_\ell$ is a Coxeter group with simple reflections $S_\ell = S \cup \{ s_0 \}$, where $s_0 = t_{\alpha_{\mathrm{hs}}} s_{\alpha_{\mathrm{hs}}}$ if $D$ does not divide $\ell^\prime$ and $s_0 = t_{\check \alpha_\mathrm{h}} s_{\alpha_\mathrm{h}}$ if $D$ divides $\ell^\prime$.
	For $\lambda \in \overline{C}_\ell \cap X$, the stabilizer of $\lambda$ with respect to the $r_\ell$-dilated dot action of $W_\ell$ is a parabolic subgroup of $W_\ell$, and if $\Stab_{W_\ell}(\lambda) = \langle I \rangle = W_{\ell,I}$ for some $I \subseteq S_\ell$ then the orbit $W_\ell \Cdot_{r_\ell} \lambda$ is in bijection with the set $W_\ell^I$ of elements $x \in W_\ell$ that have minimal length in the coset $x W_{\ell,I}$, via $w \mapsto w \Cdot_{r_\ell} \lambda$.
	Observe that the subgroup $W$ of $W_\ell$ is also a parabolic subgroup of $W_\ell$, corresponding to the set of simple reflections $S \subseteq S_\ell$.
	For $x \in W_\ell^I$, it is straightforward to see that we have $x \Cdot_{r_\ell} \lambda \in X^+$ if and only if $(x^{-1} S x) \cap I = \varnothing$ and $x \in \prescript{S}{}{W}_\ell$ (i.e.\ $x$ has minimal length in the coset $W x$).
	This means that we have $x \Cdot_{r_\ell} \lambda \in X^+$ if and only if $x \in \prescript{S}{}{W}^I_{\ell,\mathrm{reg}}$, that is, the double coset $W x W_{\ell,I}$ is regular and $x$ has minimal length in $W x W_{\ell,I}$.
	Thus, the map $x \mapsto L_{x \Cdot \lambda}$ defines a bijection between $\prescript{S}{}{W}^I_{\ell,\mathrm{reg}}$ and the set of isomorphism classes of simple $U_\zeta$-modules in $\Rep_\lambda(G)$.

We are going to translate our results about minimal tilting complexes for representations of affine Kac-Moody Lie algebras from Subsection \ref{subsec:KacMoody} to the setting of quantum groups using the Kazhdan-Lusztig functor, as explained below.
Let us set $g = \tilde \rho(c)$, $k = - ( \ell / 2 D )- g$ and
\[ \mathfrak{\grave g} \coloneqq [ \mathfrak{\tilde g} , \mathfrak{\tilde g} ] = \C[t^{\pm 1}] \otimes \mathfrak{g} \oplus \C c . \]
In \cite{KL12,KL34}, D.\ Kazhdan and G.\ Lusztig define a category $\mathcal{\grave O}_k$ of $\mathfrak{\grave g}$-modules on which the central element $c$ acts by the scalar $k$,
and they construct a functor
\[ F_\ell \colon \mathcal{\grave O}_k \longrightarrow \Rep(U_\zeta) \]
which is an equivalence of categories when $\ell$ is sufficiently large.%
\footnote{If $\ell / 2D = r / m$ for coprime integers $r,m>0$ then the functor $F_\ell$ is an equivalence when $r \geq r_0$ for some bound $r_0$ depending on the root system $\Phi$.
For $\Phi$ of type $\mathrm{A}_n$ or $\mathrm{D}_{2n}$ (respectively $\mathrm{D}_{2n+1}$, $\mathrm{E}_6$, $\mathrm{E}_7$, $\mathrm{E}_8$),
we can choose $r_0 = 1$ (respectively $r_0 = 3 , 14 , 20 , 32$).
The bounds for the non-simply-laced root systems are not known explicitly (but see Remark 7.2 in \cite{TanisakiCharacterFormulas}).}
(See also \cite{LusztigMonodromic} for the non-simply-laced case and \cite[Section7]{TanisakiCharacterFormulas} for a summary of these results.)
We call $F_\ell$ the \emph{Kazhdan-Lusztig functor}.
Now suppose that $F_\ell$ is an equivalence, and for $\lambda \in \overline{C}_\ell \cap X$, let us set $\lambda^\ast = w_S \Cdot \lambda + k \chi \in \mathfrak{h}^*$.
Also consider the parabolic subalgebra $\mathfrak{p}_S$ of $\mathfrak{\tilde g}$ corresponding to $S \subseteq \tilde S$, so that $[\mathfrak{p}_S,\mathfrak{p}_S] \cong \mathfrak{g}$ and $\tilde W_S \cong W$.
Then, according to Proposition 5.5 and the discussion following Remark 7.2 in \cite{ParshallScottSemisimpleSeries}, the restriction to $\mathfrak{\grave g}$ of a $\mathfrak{\tilde g}$-module in $\mathcal{\tilde O}_{\lambda^\ast}^{\mathfrak{p}_S}$ belongs to the category $\mathcal{\grave O}_k$, and the functor $G \coloneqq F_\ell \circ \res_\mathfrak{\grave g}^\mathfrak{\tilde g}$ induces an equivalence between $\mathcal{\tilde O}_{\lambda^\ast}^{\mathfrak{p}_S}$ and $\Rep_\lambda(U_\zeta)$.
Furthermore, it follows from Sections 6 and 7 in \cite{TanisakiCharacterFormulas} that there is a canonical isomorphism $\varphi \colon W_\ell \to \tilde W_{\lambda^\ast}$ with $\varphi(S_\ell) = \tilde S_\lambda$, where $\tilde W_{\lambda^\ast}$ denotes the integral Weyl group of $\lambda^\ast$ as in Remark \ref{rem:nonintegralKacMoody}, such that
\[ G\big( L_{ w_S \varphi(x) \Cdot \lambda^\ast } \big) \cong L^\zeta_{x\Cdot_{r_\ell}\lambda} \qquad \text{and} \qquad G\big( \Delta^{\mathfrak{p}_S}_{ w_S \varphi(x) \Cdot \lambda^\ast } \big) \cong \Delta^\zeta_{x\Cdot_{r_\ell}\lambda} \]
for all $x \in \prescript{S}{}{W}^I_{\ell,\mathrm{reg}}$.

Now we are ready to state the main theorem of this subsection.

\begin{Theorem} \label{thm:minimaltiltingcomplexesquantum}
	Let $\ell > 0$ and suppose that the Kazhdan-Lusztig functor $F_\ell$ is an equivalence.
	Further let $\lambda \in \overline{C}_\ell \cap X$ and $I \subseteq S_\ell$ such that $\Stab_{W_\ell}(\lambda) = W_{\ell,I}$.
	Then we have
	\[ \sum_{i \in \Z} \big[ C_\mathrm{min}( \Delta^\zeta_{x \Cdot_{r_\ell} \lambda} )_i : T^\zeta_{y \Cdot_{r_\ell} \lambda} \big]_\oplus \cdot v^i = m_I^{x^{-1},y^{-1}} \]
	and
	\[ \sum_{i \in \Z} \big[ C_\mathrm{min}( L^\zeta_{x \Cdot_{r_\ell} \lambda} )_i : T^\zeta_{y \Cdot_{r_\ell} \lambda} \big]_\oplus \cdot v^i = \sum_{z \in \prescript{S}{}{W}^I_{\ell,\mathrm{reg}}} \overline{ n^I_{z^{-1}w_S,x^{-1}w_S} } \cdot m_I^{ z^{-1} , y^{-1} } \]
	for all $x,y \in \prescript{S}{}{W}^I_{\ell,\mathrm{reg}}$, where $w_S$ denotes the longest element of the parabolic subgroup $W$ of $W_\ell$.
\end{Theorem}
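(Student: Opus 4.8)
The plan is to transport Theorem~\ref{thm:minimaltiltingcomplexesKacMoody} across the Kazhdan--Lusztig functor. Recall that $\Rep_\lambda(U_\zeta)$ is a block of $\Rep(U_\zeta)$, hence a lower finite highest weight category; that $G \coloneqq F_\ell \circ \res_\mathfrak{\grave g}^\mathfrak{\tilde g}$ restricts to an equivalence $\mathcal{\tilde O}_{\lambda^\ast}^{\mathfrak{p}_S} \simeq \Rep_\lambda(U_\zeta)$; and that the Coxeter isomorphism $\varphi \colon W_\ell \to \tilde W_{\lambda^\ast}$ with $\varphi(S_\ell) = \tilde S_\lambda$ satisfies $G(L_{w_S\varphi(x)\Cdot\lambda^\ast}) \cong L^\zeta_{x\Cdot_{r_\ell}\lambda}$ and $G(\Delta^{\mathfrak{p}_S}_{w_S\varphi(x)\Cdot\lambda^\ast}) \cong \Delta^\zeta_{x\Cdot_{r_\ell}\lambda}$ for all $x \in \prescript{S}{}{W}^I_{\ell,\mathrm{reg}}$, where $w_S \in W_\ell$ is the longest element of $W$ and $\tilde w_S \coloneqq \varphi(w_S)$ is the longest element of $\varphi(W) = \tilde W_S$. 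The first step is to observe that an exact equivalence of highest weight categories identifies standard, costandard, simple and indecomposable tilting objects with matching labels (the last because $T_\mu$ is characterised inside the category by $T_\mu \in \mathcal{C}_{\leq\mu}$ and $[T_\mu : L_\mu] = 1$), induces compatible equivalences on $K^b(\Tilt(-))$ and $D^b(-)$ commuting with the functor $\mathfrak{T}$ of Proposition~\ref{prop:tiltinghomotopyderived}, and preserves the radical of $\Tilt(-)$ by Lemma~\ref{lem:radical} (it preserves non-invertibility of morphisms between indecomposables); hence it takes minimal tilting complexes to minimal tilting complexes. Applying this to $G$ gives $G(T^{\mathfrak{p}_S}_{w_S\varphi(y)\Cdot\lambda^\ast}) \cong T^\zeta_{y\Cdot_{r_\ell}\lambda}$ and, for all $i\in\Z$,
\[ \big[ C_\mathrm{min}(\Delta^\zeta_{x\Cdot_{r_\ell}\lambda})_i : T^\zeta_{y\Cdot_{r_\ell}\lambda} \big]_\oplus = \big[ C_\mathrm{min}(\Delta^{\mathfrak{p}_S}_{w_S\varphi(x)\Cdot\lambda^\ast})_i : T^{\mathfrak{p}_S}_{w_S\varphi(y)\Cdot\lambda^\ast} \big]_\oplus , \]
and likewise with $\Delta^\zeta_{x\Cdot_{r_\ell}\lambda}$, $\Delta^{\mathfrak{p}_S}_{w_S\varphi(x)\Cdot\lambda^\ast}$ replaced by $L^\zeta_{x\Cdot_{r_\ell}\lambda}$, $L_{w_S\varphi(x)\Cdot\lambda^\ast}$.

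Next I would compute the right-hand sides inside $\mathcal{\tilde O}_{\lambda^\ast}^{\mathfrak{p}_S}$. The weight $\lambda^\ast$ has negative rational level ($-\ell/(2D)$), and its integral Weyl group $\tilde W_{\lambda^\ast} \cong W_\ell$ is the affine Weyl group of an affine Kac--Moody Lie algebra (namely $Q\rtimes W$ or $\check Q\rtimes W$), with $\varphi$ matching $\tilde S_\lambda$ with $S_\ell$, the stabiliser $\Stab_{\tilde W_{\lambda^\ast}}(\lambda^\ast)$ with $\Stab_{W_\ell}(\lambda) = W_{\ell,I}$, and the parabolic subgroup $\tilde W_S$ with $W \leq W_\ell$. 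By Remark~\ref{rem:nonintegralKacMoody}, Theorem~\ref{thm:minimaltiltingcomplexesKacMoody} then applies to $\mathcal{\tilde O}_{\lambda^\ast}^{\mathfrak{p}_S}$, whose weight poset is $X_{\lambda^\ast}^S = \varphi\big( w_S\,\prescript{S}{}{W}^I_{\ell,\mathrm{reg}} \big)$, and yields
\[ \sum_i \big[ C_\mathrm{min}(\Delta^{\mathfrak{p}_S}_{w_S\varphi(x)\Cdot\lambda^\ast})_i : T^{\mathfrak{p}_S}_{w_S\varphi(y)\Cdot\lambda^\ast} \big]_\oplus v^i = m_{\varphi(I)}^{(w_S\varphi(x))^{-1}\tilde w_S,\,(w_S\varphi(y))^{-1}\tilde w_S} \]
and
\[ \sum_i \big[ C_\mathrm{min}(L_{w_S\varphi(x)\Cdot\lambda^\ast})_i : T^{\mathfrak{p}_S}_{w_S\varphi(y)\Cdot\lambda^\ast} \big]_\oplus v^i = \sum_{z' \in X_{\lambda^\ast}^S} \overline{ n^{\varphi(I)}_{(z')^{-1},(w_S\varphi(x))^{-1}} } \cdot m_{\varphi(I)}^{(z')^{-1}\tilde w_S,\,(w_S\varphi(y))^{-1}\tilde w_S} , \]
where I have used that in the negative-level case the weight poset appearing in Theorem~\ref{thm:minimaltiltingcomplexesKacMoody} is $X^J_\lambda = w_J\,\prescript{J}{}{\tilde W}^I_{\mathrm{reg}}$, here read with $\tilde W_{\lambda^\ast}$, $\tilde W_S$, $\tilde w_S$, $\varphi(I)$ in place of $\tilde W$, $\tilde W_J$, $w_J$, $I$.

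The last step is to simplify, using $\tilde w_S^2 = e$ and the fact that $\varphi$, being an isomorphism of Coxeter systems, preserves all parabolic Kazhdan--Lusztig polynomials. Indeed $(w_S\varphi(x))^{-1}\tilde w_S = \varphi(x)^{-1}$, so the first displayed formula reads $m_{\varphi(I)}^{\varphi(x)^{-1},\varphi(y)^{-1}} = m_I^{x^{-1},y^{-1}}$, which together with the identity of the first paragraph proves the first assertion. For the second, writing the summation variable as $z' = \tilde w_S\varphi(z)$ with $z \in \prescript{S}{}{W}^I_{\ell,\mathrm{reg}}$ gives $(z')^{-1} = \varphi(z)^{-1}\tilde w_S$ and $(z')^{-1}\tilde w_S = \varphi(z)^{-1}$, and since $(w_S\varphi(x))^{-1} = \varphi(x)^{-1}\tilde w_S$ the summand becomes $\overline{ n^{\varphi(I)}_{\varphi(z)^{-1}\tilde w_S,\,\varphi(x)^{-1}\tilde w_S} } \cdot m_{\varphi(I)}^{\varphi(z)^{-1},\varphi(y)^{-1}} = \overline{ n^I_{z^{-1}w_S,\,x^{-1}w_S} } \cdot m_I^{z^{-1},y^{-1}}$; summing over $z \in \prescript{S}{}{W}^I_{\ell,\mathrm{reg}}$ yields the second assertion. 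The only genuine work is this index bookkeeping with the two longest elements $w_S \in W_\ell$ and $\tilde w_S = \varphi(w_S)$ and with the regular double-coset representatives, together with checking the hypothesis of Remark~\ref{rem:nonintegralKacMoody}; the latter is immediate once one knows that $\tilde W_{\lambda^\ast} \cong W_\ell$ is an affine Weyl group, so I do not anticipate a serious obstacle beyond this careful translation.
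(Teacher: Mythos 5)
Your proof is correct and follows exactly the approach the paper takes; the paper's own proof is the single sentence ``This follows from Theorem~\ref{thm:minimaltiltingcomplexesKacMoody} and Remark~\ref{rem:nonintegralKacMoody} by applying the functor $G = F_\ell \circ \res_\mathfrak{\grave g}^\mathfrak{\tilde g}$,'' and your write-up supplies precisely the omitted details: that an equivalence of highest weight categories takes minimal tilting complexes to minimal tilting complexes (via Lemma~\ref{lem:radical} and Proposition~\ref{prop:tiltinghomotopyderived}), and the bookkeeping through the Coxeter isomorphism $\varphi$ and the two longest elements $w_S$, $\tilde w_S = \varphi(w_S)$ that turns $m_I^{(w_Sx)^{-1}w_S,(w_Sy)^{-1}w_S}$ into $m_I^{x^{-1},y^{-1}}$ and reindexes the sum over $z$.
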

\begin{proof}
	This follows from Theorem \ref{thm:minimaltiltingcomplexesKacMoody} and Remark \ref{rem:nonintegralKacMoody} by applying the functor $G = F_\ell \circ \res_\mathfrak{\grave g}^\mathfrak{\tilde g}$.
\end{proof}

\begin{Remark} \label{rem:minimaltiltingcomplexesquantumspecialcases}
	Let us keep the notation and assumptions of Theorem \ref{thm:minimaltiltingcomplexesquantum}, but now additionally assume that $\lambda \in C_\ell \cap X$, so that $\Stab_{W_\ell}(\lambda) = \{ e \}$.
	(This implicitly forces that $r_\ell \geq \rho(\alpha_{\mathrm{hs}}^\vee)+1$ if $D$ does not divide $\ell^\prime$ and that $r_\ell \geq \rho(\alpha_{\mathrm{h}}^\vee)+1$ if $D$ divides $\ell^\prime$.)
	Then for all $x,y \in \prescript{S}{}{W}_\ell$, we have
	\begin{equation} \label{eq:minimaltiltingstandardquantum}
	\sum_{i \in \Z} \big[ C_\mathrm{min}( \Delta^\zeta_{x \Cdot_{r_\ell} \lambda} )_i : T^\zeta_{y \Cdot_{r_\ell} \lambda} \big]_\oplus \cdot v^i = h^{x^{-1},y^{-1}} = h^{x,y} = n_S^{x,y}
	\end{equation}
	by Theorem \ref{thm:minimaltiltingcomplexesquantum} and by Proposition 3.7 in \cite{SoergelKL}.
	We consider the formula \eqref{eq:minimaltiltingstandardquantum} as a homological lift of W.\ Soergel's character formula for indecomposable tilting $U_\zeta$-modules from \cite{SoergelCharakterformeln}, which asserts that $[ T^\zeta_{x\Cdot_{r_\ell}\lambda} : \Delta^\zeta_{y \Cdot_{r_\ell} \lambda} ]_\Delta = n^S_{y,x}(1)$ for all $x,y \in \prescript{S}{}{W}_\ell$.
\end{Remark}

\bibliographystyle{alpha}
\bibliography{tilting_v2}

\end{document}